\documentclass[preprint,3p,times]{elsarticle}

\usepackage{amssymb}
\usepackage{amsmath}
\usepackage{amsthm}
\usepackage{color}
\usepackage{graphicx,subfig} 
\usepackage{booktabs}   
\usepackage{threeparttable} 
\usepackage{enumerate}
\usepackage[all,pdf]{xy}
\usepackage{lmodern,amsmath}
\usepackage{tikz-cd}
\usepackage{mathrsfs}
\usepackage{lipsum}
\usepackage{enumitem,verbatim}
\usepackage{tikz}
\usepackage{appendix}
\usepackage[colorlinks=true]{hyperref}

\theoremstyle{plain}
  \newtheorem{thm}{Theorem}[section]
  \newtheorem{lem}[thm]{Lemma}
  \newtheorem{prop}[thm]{Proposition}
  \newtheorem{cor}[thm]{Corollary}
\theoremstyle{definition}
  \newtheorem{definition}[thm]{Definition}
  \newtheorem{exmp}[thm]{Example}
  \newtheorem{remark}[thm]{Remark}

\DeclareMathAlphabet{\mathcal}{OMS}{cmsy}{m}{n}

\makeatletter
\def\ps@pprintTitle{%
 \let\@oddhead\@empty
 \let\@evenhead\@empty
 \def\@oddfoot{\centerline{\thepage}}%
 \let\@evenfoot\@oddfoot}
\makeatother

\newcommand{\twoheaddownarrow}{\mathrel{\rotatebox[origin=c]{-90}{$\twoheadrightarrow$}}}
\renewcommand{\phi}{\varphi}

\numberwithin{equation}{section}

\allowdisplaybreaks

\begin{document}

\begin{frontmatter}



\title{A Natural Fuzzy Order on  Fuzzy Numbers}


\author{Hui Kou}
\ead{kouhui@scu.edu.cn}

\author{Mingchun Xie\corref{cor}}
\ead{xiemc1011@163.com}

\cortext[cor]{Corresponding author.}
\address{School of Mathematics, Sichuan University, Chengdu 610064, China}

\begin{abstract}
This paper introduces a natural fuzzy order on fuzzy numbers that extends the natural orders on real numbers and interval numbers. We investigate its completeness properties and show that the space of uniformly bounded fuzzy numbers is conically complete and conically cocomplete, and that it is complete if and only if the underlying continuous t-norm is the Gödel t-norm. Moreover, it is proved that this space constitutes a \([0,1]\)-enriched domain if and only if the underlying continuous t-norm satisfies the (S) condition. These results provide a foundation for ordering fuzzy numbers.
\end{abstract}

\begin{keyword}
Fuzzy numbers, Fuzzy order, [0,1]-enriched categories, Completeness, Continuity.  

\end{keyword}

\end{frontmatter}

\section{Introduction}
The theory of fuzzy sets, pioneered by Zadeh \cite{ZADEH1965338} in 1965, introduced a powerful mathematical framework for dealing with vagueness and uncertainty. Its fundamental innovation lies in the concept of membership functions, which extend the notion of an element's belonging to a set from a binary truth value to a continuous degree within the interval \([0,1]\). This allows for the precise mathematical characterization of imprecise concepts. Since its inception, fuzzy mathematics has evolved rapidly, developing a rich theoretical structure encompassing fuzzy logic, fuzzy relations, and fuzzy systems. Its applications span diverse fields such as pattern recognition, automatic control, and decision analysis, profoundly transforming methodologies for modeling and solving problems involving uncertainty. As a core object within this theory, fuzzy numbers provide a natural model for imprecise quantitative information, and the problem of comparing and ordering them has consequently become a foundational topic.

Within this domain, the comparison and ordering of fuzzy numbers represents a fundamental and persistent theoretical challenge. The effectiveness of ranking fuzzy numbers directly influences the reliability of applications such as fuzzy optimization and multi-attribute decision-making.

Previously, many scholars attempted to address this challenge by relying on classical crisp partial orders. For instance, Zadeh \cite{ZADEH1971177} defined an order based on the pointwise comparison of membership functions, while Klir and Yuan \cite{Klir1995FuzzySA} defined one based on comparisons of all $\alpha$-cuts. Subsequent research has further explored properties of these orders, such as the existence of supremum and infimum for bounded sets and order approximation under specific distance metrics. However, these relations are all partial orders. While they are theoretically important, their inherent allowance for incomparability makes them difficult to apply directly in practical decision scenarios that require a complete ranking. This gap between theoretical soundness and practical need for a total order has motivated the development of various ranking methodologies, which can be broadly categorized into three types.

The first category comprises defuzzification and feature extraction techniques, which map a fuzzy number to a single real value to enable direct comparison. Common approaches include calculating the centroid, the center of area, or specific points from the support or core of the fuzzy set \cite{BORTOLAN19851,KaufmannGupta1985,CHU2002111}. Methods based on distance metrics---such as minimizing the distance between fuzzy numbers \cite{CHENG1998307}---also belong to this category. Although these techniques induce a linear order and thus simplify comparison, they often discard essential fuzzy information, such as the spread of the support or the shape of the membership function.

The second category involves constructing a reference set \cite{jain1977procedure,jain1976decision,CHEN1985113} (e.g., an ideal or anti-ideal fuzzy number). Fuzzy numbers are then ranked based on their distance or similarity to this reference. A notable drawback of this approach is the inherent subjectivity involved in defining the reference set.

The third category involves constructing a fuzzy relation to perform pairwise comparisons between fuzzy numbers \cite{wang1997comparative,wang1996classification}. A final ranking is then derived by analyzing this relational matrix. Wang and Kerre \cite{WANG2001387} proved that many fuzzy relations used for comparing fuzzy quantities satisfy conditions stronger than acyclicity, thereby providing a widely applicable framework for deriving a total order from a fuzzy relation. A fuzzy order is formally defined as a generalization of a crisp order to the fuzzy setting. Notably, however, none of the specific fuzzy relations discussed by Wang and Kerre \cite{WANG2001387} constitute a fuzzy order. More recently, Urbański \cite{URBANSKI2024108992} studied fuzzy orders for the level sets of fuzzy numbers based on triangular norms. Beyond these,  other approaches include the parameter-dependent method in \cite{WANG2014131}, which achieves a linear order but relies on parameter selection and results in a lexicographic (not natural) order for interval-valued fuzzy numbers. Alternatively, methods based on probability measures \cite{ZADEH19999,DUBOIS1983183} frequently lack properties such as reflexivity and weak transitivity, preventing them from forming a true fuzzy order.

In summary, existing ranking methods face a fundamental trade-off. They either sacrifice the strict mathematical structure of an order to achieve a total order (e.g., defuzzification, reference-set methods), or they employ fuzzy relations that lack the support of complete order axioms. What remains missing is a mathematical construct that serves as a sound generalization of the classical partial order in the fuzzy setting—one that satisfies properties such as fuzzy reflexivity and transitivity. Although the abstract notion of a fuzzy order has been defined, constructing a concrete, natural instance for fuzzy numbers remains an open problem. Such an instance must be both an intuitive extension of the real order and endowed with rich theoretical content.

This paper aims to bridge this gap by defining a natural fuzzy order \(P\) for fuzzy numbers.  This order is designed not only as a direct and intuitive extension of the natural orders on real numbers \(\mathbb{R}\) and interval numbers \(\mathbb{IR}\), but also as one that incorporates the distribution and structure of fuzzy numbers. Our work is based on the theory of $[0,1]$-enriched categories---a framework in which such a category is essentially a set equipped with a fuzzy order. Leveraging recent advances in this theory~\cite{Zhang2024IntroductoryNO}, the fuzzy order relation \(P\) is formally defined and its fundamental order-theoretic properties are systematically investigated. \textbf{The main contributions of this work are as follows:}
\begin{itemize}
	\item \textbf{Definition of a novel fuzzy order:} Based on \([0,1]\)-enriched category theory, we propose a fuzzy order \(P\) for fuzzy numbers. This relation naturally extends both the natural order on real numbers and that on interval numbers. Moreover, it allows the degree of comparison to be modulated by the choice of different continuous t-norms, while also reflecting the distribution and structure of fuzzy numbers.
	\item \textbf{Completeness analysis:} As a generalization of real numbers, fuzzy numbers inherit mathematical structures under this fuzzy order that are analogous to the completeness and bounded completeness of the real numbers. In particular, the uniformly bounded space is conically complete and conically cocomplete, and it is complete (equivalently cocomplete, tensored, or cotensored) exactly for the Gödel t-norm.

	\item \textbf{Continuity and domain structure:} Under t-norms satisfying condition (S), we establish the continuity of uniformly bounded fuzzy numbers and prove that they also have approximation properties similar to those on the real numbers.
	\item \textbf{Theoretical framework:} By providing a unified theoretical foundation, our work not only supports comparison and ranking in fuzzy decision-making but also opens avenues for further analytical developments in fuzzy mathematics.
\end{itemize}

The remainder of this paper is structured as follows. Section 2 provides a review of preliminary concepts related to fuzzy numbers and $[0,1]$-enriched categories. Section 3 introduces the decomposition of fuzzy numbers and defines the fuzzy order $P$. Section 4 shows various completeness properties of \([0,1]\)-enriched categories in the context of fuzzy numbers. Section 5 investigates the continuity and domain structure under the condition (S).
\section{Preliminaries} 
This section reviews the fundamental concepts for constructing the fuzzy order. The section begins with the standard definition of fuzzy numbers. Then triangular norms (t-norms) and their residual implications are introduced, which are the primary logical operators for defining the fuzzy order. Furthermore, we recall key notions from the theory of $[0,1]$-enriched categories. 
\begin{definition}
The fuzzy number space \( \mathbb{E}^1 \) is the set of all functions \( u :\mathbb{R}\to [0,1]\) satisfying the following properties:
\begin{enumerate}[label={\rm(\arabic*)}]
  \item Normality: There exists \( x_0 \in \mathbb{R} \) with \( u(x_0) = 1 \).
  \item Convexity: \( u(\lambda x + (1-\lambda)y) \geq \min \{u(x), u(y)\} \) for all \( x, y \in \mathbb{R}, \lambda \in [0,1] \).
  \item Upper-semicontinuity: \( u(x) \) is upper-semicontinuous, i.e.,  for all  \(x_0\in \mathbb{R}, \limsup_{x \to x_0} u(x) \le u(x_0).\) 
  \item Compact support: \([u]_0=\operatorname{cl}_{\mathbb{R}} \left\{x \in \mathbb{R} : u(x) > 0\right\} \) is compact in \( \mathbb{R} \).
\end{enumerate}
\end{definition} 
Given a fuzzy subset \( u \) on \( \mathbb{R} \), the \( \alpha \)-level set of \( u \) is defined by \([u]_ \alpha  = \{x \in \mathbb{R} : u(x) \geq  \alpha \} \) for \(  \alpha  \in (0, 1] \).

\begin{definition}[\cite{Alsina2006}]
    A \( t \)-norm is a function of two variables \( \&: [0,1] \times [0,1] \to [0,1] \) that satisfies the following properties:
\begin{enumerate}[label={\rm(\arabic*)}]
    \item Commutativity: for every \( x, y \in [0,1] \) one has \( x \&y = y\&x \),
    \item Monotonicity: for every \( x, y, z, w \in [0,1] \) if \( x \leq z \), \( y \leq w \) then  $x \& y \leq z\& w$,
    \item Associativity: for every \( x, y, z \in [0,1] \) one has \( x\& (y\& z) = (x\& y)\& z \),
    \item Identity element: for every \( x \in [0,1] \) one has \( x\& 1 = x \).
\end{enumerate}
\end{definition}
We denote by $\oplus$ the standard fuzzy addition defined via Zadeh's extension principle
(cf.\ \cite{Klir1995FuzzySA}), which satisfies $[u\oplus w]_\alpha=[u]_\alpha+[w]_\alpha$ for all
$\alpha\in[0,1]$.

Given a left-continuous t-norm \(\& \), the symbol \( \rightarrow \) denotes its unique residual implication. 
\[
x\rightarrow y = \sup\{u \in [0,1] \mid x\& u \leq y\}.
\]
We briefly review the key properties of the residual implication, as discussed in \cite{HL2006}.
\begin{prop}\label{properties of residual implication}
    Suppose that \( \& \) is a left continuous t-norm and \( \rightarrow \) is the residual implication with respect to \( \& \). Then
\begin{enumerate}[label={\rm(\arabic*)}]
    \item \( x \leq y \) if and only if \( x\rightarrow y = 1 \).
    \item \( x\& y \leq z \) if and only if \( x \leq y\rightarrow z \).
    \item \( (x\rightarrow y)\& (y\rightarrow z) \leq x\rightarrow z \).
    \item \( 1\rightarrow y = y \).
    \item \( x\&( x \rightarrow y) \leq y \).
    \item \(( \bigvee_{i} x_i)\rightarrow y=\bigwedge_i(x_i\rightarrow y)\).
    \item \( x\rightarrow(\bigwedge_i y_i) =\bigwedge_i (x\rightarrow y_i)\).
\end{enumerate}
\end{prop}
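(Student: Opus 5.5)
The plan is to derive everything from the definition $x\rightarrow y=\sup\{u\in[0,1]\mid x\&u\le y\}$, monotonicity of $\&$, and left-continuity. The key step is the adjunction (2); every other item then follows from it by short formal arguments. Left-continuity of $\&$ in each variable, by commutativity, means $x\&\bigvee_i u_i=\bigvee_i(x\&u_i)$ for any nonempty family $\{u_i\}$. (Left-continuity at the empty family is trivial, since $x\&0=0$.)

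For (2), first suppose $x\le y\rightarrow z$. By commutativity $x\&y=y\&x$, so it suffices to show $y\&(y\rightarrow z)\le z$. Put $S=\{u\mid y\&u\le z\}$. Then by left-continuity
\[
y\&\bigvee S=\bigvee_{u\in S}y\&u\le z,
\]
so the supremum is attained. Monotonicity then gives $y\&x\le y\&(y\rightarrow z)\le z$. For the converse, if $x\&y\le z$ then $x\in S$, so $x\le\sup S=y\rightarrow z$. This sup-attainment is the only place where left-continuity is needed, and it is the main point of the whole proposition.

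The remaining items then follow.
\begin{itemize}
\item (5) is the special case $x=y\rightarrow z$ of the attainment just proved, after renaming variables.
\item (1): by (2), $1\le x\rightarrow y$ iff $1\&x\le y$, i.e.\ iff $x\le y$.
\item (4): $u\le 1\rightarrow y$ iff $1\&u\le y$ iff $u\le y$, for every $u$; hence $1\rightarrow y=y$.
\item (3): by (2) it suffices to show $x\&(x\rightarrow y)\&(y\rightarrow z)\le z$. Using associativity and (5) twice,
\[
x\&(x\rightarrow y)\&(y\rightarrow z)\le y\&(y\rightarrow z)\le z.
\]
\item (6) and (7) follow by a Yoneda-style argument through (2).
  \begin{itemize}
  \item For (7): $u\le x\rightarrow\bigwedge_i y_i$ iff $u\&x\le\bigwedge_i y_i$ iff $u\&x\le y_i$ for all $i$ iff $u\le x\rightarrow y_i$ for all $i$ iff $u\le\bigwedge_i(x\rightarrow y_i)$.
  \item For (6): $u\le(\bigvee_i x_i)\rightarrow y$ iff $u\&\bigvee_i x_i\le y$. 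By left-continuity this holds iff $\bigvee_i(u\&x_i)\le y$, i.e.\ iff $u\le x_i\rightarrow y$ for all $i$, i.e.\ iff $u\le\bigwedge_i(x_i\rightarrow y)$.
  \item In both cases two elements of $[0,1]$ have the same lower bounds, so they are equal.
  \item The empty-family cases of (6) and (7) reduce to $0\rightarrow y=1$ and $x\rightarrow 1=1$. Both are immediate from (1).
  \end{itemize}
\end{itemize}
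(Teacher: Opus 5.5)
Your proof is correct. The paper gives no argument for this proposition; it only recalls these standard facts from \cite{HL2006}. So your write-up replaces a citation rather than competing with a proof in the text. It is the usual textbook derivation, and its value is that it shows exactly where left-continuity enters. It is needed once, to show that the supremum defining $y\rightarrow z$ is attained, which gives (2) and (5). After that, (1), (3), (4) and (7) are formal consequences of the adjunction.

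Two small remarks. First, you say that sup-attainment is the only place left-continuity is needed, yet your proof of (6) invokes it again. This is avoidable: by (2) and commutativity, $u\&\bigvee_i x_i\le y$ iff $\bigvee_i x_i\le u\rightarrow y$ iff $x_i\le u\rightarrow y$ for all $i$ iff $u\le x_i\rightarrow y$ for all $i$. Hence (6) also follows from the adjunction alone, and your claim becomes literally true.

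Second, the passage from left-continuity, which is a sequential condition, to preservation of arbitrary nonempty joins deserves one line. If $\sup S\in S$, monotonicity suffices. Otherwise $\sup S$ is the limit of a strictly increasing sequence in $S$, and left-continuity applies to it. Also note that $S\neq\emptyset$ in your proof of (2), because $0\in S$, so the nonempty-join form applies there.
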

\begin{prop}\label{left-continuous is jointly upper semicontinuous function}
	Let \(\&\) be a left-continuous t-norm and let \(\rightarrow\) be the residual implication with respect to \(\&\). Then, \(I(a, b) = a\rightarrow b \) is a jointly upper-semicontinuous
function in \([0,1]^2\).
\end{prop}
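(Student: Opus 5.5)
We have to prove that $I(a,b)=a\rightarrow b$ is jointly upper semicontinuous on $[0,1]^2$. Concretely, we show that for every $t\in[0,1]$ the strict superlevel set $\{(a,b): a\rightarrow b < t\}$ is open. Equivalently, for every $(a_0,b_0)$ and every $t> a_0\rightarrow b_0$ there is a neighbourhood of $(a_0,b_0)$ on which $a\rightarrow b<t$.

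\textbf{Construction of the neighbourhood.} Fix $(a_0,b_0)$ and $t>a_0\rightarrow b_0$. By the definition of the residual as a supremum (Proposition \ref{properties of residual implication}(2)), $t\not\le a_0\rightarrow b_0$ means $a_0\,\&\,t>b_0$. Pick $s$ with $a_0\rightarrow b_0<s<t$; again $a_0\,\&\,s>b_0$. Now use left-continuity of $\&$ in the first variable: since $a_0\& s=\sup_{a<a_0}a\& s$ when $a_0>0$, there is $a'<a_0$ with $a'\& s>b_0$. If $a_0=0$, then $a_0\& s=0>b_0$ is impossible, so this case does not arise; note $t>1$ cannot occur either. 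Finally choose $b'$ with $b_0<b'<a'\& s$.

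\textbf{Verification.} The set $U=\{(a,b): a>a',\ b<b'\}$ is open in $[0,1]^2$ and contains $(a_0,b_0)$. For $(a,b)\in U$, monotonicity gives $a\& s\ge a'\& s>b'>b$. Hence $s\not\le a\rightarrow b$ by Proposition \ref{properties of residual implication}(2), that is, $a\rightarrow b<s<t$. This proves openness of $\{I<t\}$, and therefore upper semicontinuity.

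\textbf{Main point of care.} The key step is extracting $a'<a_0$, and this is exactly where left-continuity is needed. It has to be applied to the first argument with the second fixed at $s$, which is legitimate by commutativity. We also need the auxiliary $s<t$ so that the final inequality is strict. The boundary cases ($a_0=0$, or $b_0=1$ where $I=1$ and any $t>1$ is vacuous) are checked to be vacuous or trivial.
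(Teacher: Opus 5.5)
Your proof is correct and uses the same ingredients as the paper's proof: the adjunction, left-continuity of $\&$ in the first variable, and monotonicity. You show openness of $\{I<t\}$ through the explicit neighbourhood $\{a>a',\,b<b'\}$, whereas the paper proves sequential closedness of $\{I\ge c\}$ with a case split on whether $a_n\le a$ or $a_n>a$. Two cosmetic points: $\{I<t\}$ is a strict sublevel (not superlevel) set, and the auxiliary $s$ is unnecessary, since $a\,\&\,t>b$ already yields $a\rightarrow b<t$ by adjunction.
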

\begin{proof}
For every \(c\in[0,1]\), define the \(c\)-upper-level set
\[
L_c:=\{(a,b)\mid I(a,b)\ge c\}.
\]
We prove that each \(L_c\) is closed. Fix \(c\in[0,1]\) and take a sequence \((a_n,b_n)\in L_c\) with \((a_n,b_n)\to(a,b)\).  
 
\noindent\textbf{Case 1.} There are infinitely many indices \(n\) such that \(a_n\le a\).  
Then a subsequence \(\{(a_{n_k},b_{n_k})\}_{n_k}\) can be chosen  with \(a_{n_k}\le a\) and \(a_{n_k}\nearrow a\).  
Since \(\&\) is left‑continuous in its first variable and \(a_{n_k}\le a\), it follows that
\[
 a\,\&\,c=\lim_{k\to\infty} (a_{n_k}\,\&\,c) \le \lim_{k\to\infty} b_{n_k}=b .
\]
\noindent\textbf{Case 2.} Only finitely many indices satisfy \(a_n\le a\).  
Then there exists \(N\) such that for all \(n>N\), \(a_n > a\).  
Monotonicity of the t‑norm gives \(a\,\&\,c \le a_n\,\&\,c\) whenever \(a_n>a\). Consequently, for \(n>N\),
\[
a\,\&\,c \le a_n\,\&\,c \le b_n .
\]
Letting \(n\to\infty\) yields  \(a\,\&\,c \le b\).
Thus, in both cases \(a\,\&\,c\le b\), so \((a,b)\in L_c\). Hence each \(L_c\) is closed, and the residuum \(I\) is jointly upper‑semicontinuous on \([0,1]^2\).  
\end{proof}
\begin{definition}[\cite{Gierz}]
   Suppose \(P\) is a partially  ordered set. We say that 
 \begin{enumerate}[label={\rm(\arabic*)}]
     \item \(P\) is a directed complete partially ordered set, a dcpo for short, if every directed subset of $P$ has a join. \(P\) is a complete lattice  if  any subset \(A\) of \(P\) has a join.
     \item \(P\) is a continuous partially ordered set if for each $x \in P$, the set $\{y \in P \mid y \ll x\}$ is directed and has $x$ as a join, where  \(y \ll x \), if for each directed \(D\) of \(P\) with a join, \[x\le \sup D \implies y\le d \text{ for some \(d\in D.\)}\]
     \item \(P\) is a continuous lattice if \(P\) is a continuous and complete lattice.
 \end{enumerate}

\end{definition}
Next, some basic concepts of \([0,1]\)-enriched categories are introduced, and  notations are established. The materials can be found in \cite{Zhang2024IntroductoryNO,LAI20201,Lai2019CompletelyDE}. 
\begin{definition}
	A \([0,1]\)-enriched category is a pair $(X, \alpha)$, where $X$ is a set and $\alpha: X \times X \to [0,1]$ is a function such that
	\begin{enumerate}[label={\rm(\arabic*)}]
		\item $\alpha(x, x) = 1$ for all $x \in X$;
		\item  $\alpha(y, z) \& \alpha(x, y) \leq \alpha(x, z)$ for all $x, y, z \in X$.
	\end{enumerate}
\end{definition}
For a $[0,1]$-enriched category $(X,\alpha)$, its opposite category
$X^{\mathrm{op}}$ is defined by
\(
X^{\mathrm{op}}(x,y)=X(y,x).
\)
The underlying relation of $X$ is defined by
\(
x\sqsubseteq y
\Longleftrightarrow
X(x,y)=1.
\)
We write
\(
X_0=(X,\sqsubseteq).
\)
In general, $X_0$ is a preorder. If $X$ is separated, then
$X_0$ is a partially ordered set.
For notational convenience, when dealing with a \([0,1]\)-enriched category \((X,\alpha)\), the symbol \(\alpha\) is often omitted and we write $X(x,y)$ for $\alpha(x,y)$.
Two elements $x$ and $y$ of a \([0,1]\)-enriched category $X$ are isomorphic if $X(x,y) = 1 = X(y,x)$. A \([0,1]\)-enriched category $X$ is separated if its isomorphic elements are identical.
\begin{definition}[Fuzzy preorder and fuzzy order \cite{HL2006}]\label{def:fuzzy-order}
Let $X$ be a set and $\&$ a left-continuous t-norm. A map $P \colon X \times X \to [0,1]$ is called
\begin{enumerate}[label=(\arabic*)]
    \item an \emph{$\&$-fuzzy preorder} on $X$ if
        \begin{itemize}
            \item[(i)] $P(x,x) = 1$ for all $x \in X$ \hfill (reflexivity);
            \item[(ii)] $P(y,z) \,\&\, P(x,y) \leq P(x,z)$ for all $x,y,z \in X$ \hfill (transitivity).
        \end{itemize}
    \item a \emph{fuzzy order} (or \emph{$\&$-fuzzy partial order}) on $X$ if it is an $\&$-fuzzy preorder 
        and additionally satisfies
        \begin{itemize}
            \item[(iii)] $P(x,y) = P(y,x) = 1$ implies $x = y$ \hfill (antisymmetry).
        \end{itemize}
\end{enumerate}
The pair $(X,P)$ is called a \emph{fuzzy preordered set} (resp.\ \emph{fuzzy ordered set}).
\end{definition}
\begin{remark}
Let \(X\) be a \([0,1]\)-enriched category. If we interpret the value \(\alpha(x,y)\) as the truth degree that \(x\) is less than or equal to \(y\), then (1) corresponds to reflexivity and (2) to transitivity. If we replace \([0,1]\) with \(\{0,1\}\), then \(X\) becomes a preorder. If \(X\) is separated, then it becomes a partially ordered set. Therefore, a separated \([0,1]\)-enriched category is exactly a fuzzy ordered set in the sense of Definition~\ref{def:fuzzy-order}.
\end{remark}
\begin{exmp}
\begin{enumerate} [label={\rm(\arabic*)}]
    \item The pair $([0,1], \alpha_L)$ is a separated \([0,1]\)-enriched category, where for all $x, y \in [0,1]$,
$\alpha_L(x, y) = x \to y$. The opposite of $([0,1], \alpha_L)$ is denoted by $([0,1], \alpha_R)$; that is, $\alpha_R(x, y) = y \to x$. In the sequel we write $V$ for $([0,1], \alpha_L)$, hence $V^\mathrm{op}$ for $([0,1], \alpha_R)$. Both $V$ and $V^\mathrm{op}$ play an important
role in the theory of \([0,1]\)-enriched categories.
    \item For each set $X$, $([0,1]^X, \text{sub}_X)$ is a separated \([0,1]\)-enriched category, where for all $\lambda, \mu \in [0,1]^X$,
  \[
  \text{sub}_X(\lambda, \mu) = \inf_{x \in X} \lambda(x) \to \mu(x).
  \]
Following Zadeh \cite{ZADEH1965338}, when $\lambda$ and $\mu$ are interpreted as fuzzy subsets of $X$, the value $\text{sub}_X(\lambda, \mu)$ measures the truth degree of the inclusion $\lambda \subseteq \mu$. Consequently, the category $([0,1]^X, \text{sub}_X)$ is termed the enriched powerset of $X$.
\end{enumerate}
 
\end{exmp}
    
Suppose $X,Y$ are \([0,1]\)-enriched categories. A functor $f:X \to Y$ is a map such that $X(x,y) \leq Y(f(x),f(y))$ for all $x,y \in X$. A weight of \( X \) is defined to be a functor \( \phi: X^{\mathrm{op}} \to V \), where \( V = ([0,1], \rightarrow) \), i.e., \(X(y,x)\le \phi(x)\rightarrow \phi(y)\). Weights of $X$ constitute a \([0,1]\)-enriched category \( \mathcal{P}X\) with \[ \mathcal{P}X(\phi_1,\phi_2)=\operatorname{sub}_X(\phi_1,\phi_2)=\inf_{x\in X}\left(\phi_1(x)\rightarrow \phi_2(x)\right).\]
Dually, a coweight of $X$ is a functor $\psi: X \to V$, where $V= ([0,1], \rightarrow)$. All coweights of
$X$ constitute a \([0,1]\)-enriched category $\mathcal{P}^\dagger X$ with
\[
\mathcal{P}^\dagger X(\psi_1, \psi_2)= \operatorname{sub}_X(\psi_2, \psi_1)=\inf_{x\in X}\left(\psi_2(x)\rightarrow \psi_1(x)\right).
\]
\begin{definition}
    A colimit of a weight $\phi$ is an element $\operatorname{colim}\phi$ of $X$ such that for all $x \in X$,
\[
X(\operatorname{colim}\phi, x) = \mathcal{P}X(\phi, X(-,x)).
\]
A limit of a coweight $\psi$ is an element $\operatorname{lim}\psi$ of $X$ such that for all $x \in X$,
\[
X(x, \operatorname{lim}\psi ) = \mathcal{P}^\dagger X( X(x,-), \psi).
\]
\end{definition}
Clearly, each weight has, up to isomorphism, at most one colimit, and similarly each coweight has at most one limit.

\begin{definition}
	A \([0,1]\)-enriched category $X$ is cocomplete if every weight of $X$ has
	a colimit.
\end{definition}

Let $X$ be a \([0,1]\)-enriched category.  For each $x\in X$ and $r\in [0,1]$, the tensor of $r$ with $x$, denoted by $r\otimes x$, is an element of $X$ such that for all $y\in X$,$$X(r\otimes x,y)=r\rightarrow X(x,y).$$
Tensors are a special kind of colimits. 
For each $y\in X$ and $r\in [0,1]$, the cotensor of $r$ with $y$, denoted by $r\multimap  y$, is an element of $X$ such that for all $x\in X$,$$X( x,r\multimap y)=r\rightarrow X(x,y).$$
Cotensors are a special kind of limits.

A weight $\phi$ of $X$ is conical if \[
\phi = \sup_{a \in A} X(-,a)
\]
for a subset $A$ of $X$. \(\phi\) is a finite conical weight if  \(A\) is finite.  Dually, a coweight $\psi$ of
$X$ is conical if
\[
\psi = \sup_{b \in B} X(b,-)
\]
for a  subset $B$ of $X$.   \(\psi\) is a finite conical coweight if  \(B\) is finite.

\begin{definition}
	Suppose \( X \) is a \([0,1]\)-enriched category. We say that
	\begin{enumerate}[label={\rm(\arabic*)}] 
		\item \( X \) is conically cocomplete if every conical weight of \( X \) has a colimit.
		\item \( X \) is finitely  conically cocomplete if every  finitely conical weight of \( X \) has a colimit.
		\item \( X \) is conically complete if every conical coweight of \( X \) has a limit.
		\item \( X \) is finitely  conically complete if every  finitely conical coweight of \( X \) has a limit.
		\item \( X \) is order-complete if the underlying ordered set \( X_0 \) is complete.
	\end{enumerate}
\end{definition}
\begin{thm}[\cite{kelly1982basic,Stubbe}, or \cite{Zhang2024IntroductoryNO}, Theorem 7.8]\label{complete category} 
	For each \([0,1]\)-enriched category $X$, the
	following statements are equivalent:
	\begin{enumerate}[label={\rm(\arabic*)}]
		\item $X$ is cocomplete.
		\item $X$ is complete.
		\item $X$ is tensored and conically cocomplete.
		\item $X$ is cotensored and conically complete.
		\item $X$ is order-complete, tensored and cotensored.
	\end{enumerate}
\end{thm}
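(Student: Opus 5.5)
We prove the cycle of implications $(1)\Rightarrow(3)\Rightarrow(5)$, $(2)\Rightarrow(4)\Rightarrow(5)$, then $(5)\Rightarrow(1)$ and $(5)\Rightarrow(2)$. We also use the duality $X\mapsto X^{\mathrm{op}}$, which swaps weights with coweights, colimits with limits, and tensors with cotensors. The direct implication $(1)\Rightarrow(2)$ comes from the standard formula expressing a limit as a colimit. Given a coweight $\psi$, define the weight
\[
\phi(x)=\inf_{y\in X}\big(\psi(y)\rightarrow X(x,y)\big).
\]
One checks that $\phi$ is a weight using Proposition~\ref{properties of residual implication}(3),(7). Then one verifies that $\operatorname{colim}\phi$ satisfies the defining equation of $\lim\psi$. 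The key inequalities are $\psi(y)\&\phi(x)\le X(x,y)$ for the unit direction, and the adjunction in Proposition~\ref{properties of residual implication}(2). By duality $(2)\Rightarrow(1)$ holds as well.

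The easy implications go as follows. Conical weights are weights, so cocompleteness gives conical cocompleteness. The tensor $r\otimes x$ is exactly the colimit of the weight $r\& X(-,x)$: indeed $\mathcal{P}X(r\&X(-,x),X(-,y))=r\rightarrow X(x,y)$, using the Yoneda-type identity $\inf_z\big(X(z,x)\rightarrow X(z,y)\big)=X(x,y)$ (take $z=x$ for one direction, transitivity for the other). Hence $(1)\Rightarrow(3)$, and dually $(2)\Rightarrow(4)$. For $(3)\Rightarrow(5)$, a supremum of $A\subseteq X_0$ is given by the colimit of the conical weight $\sup_{a\in A}X(-,a)$. Evaluating the colimit equation shows $X(\operatorname{colim},x)=\inf_{a}X(a,x)$, so the colimit is the join in $X_0$ and $X_0$ is complete. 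The cotensors are obtained by combining (1) and (2), but (2) is not yet available at that point. Instead, construct $r\multimap y$ directly as the colimit of the conical weight $\sup\{X(-,a): r\le X(a,y)\}$, equivalently as the join in $X_0$ of $\{a: r\otimes a\sqsubseteq y\}$. Checking that this works uses tensors together with the property that $r\otimes(-)$ preserves conical colimits (it is a left adjoint in the enriched sense).

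The main step is $(5)\Rightarrow(1)$. Given a weight $\phi$, we define
\[
c=\bigvee_{x\in X}\phi(x)\otimes x,
\]
the join taken in the complete order $X_0$. The goal is $X(c,y)=\inf_x\big(\phi(x)\rightarrow X(x,y)\big)$. The obstacle is that a join in $X_0$ is only an order-theoretic join, whereas we need $X(\bigvee_i a_i,y)=\inf_i X(a_i,y)$. This enriched property is supplied by the cotensors. We have $r\le X(\bigvee a_i,y)$ iff $\bigvee a_i\sqsubseteq r\multimap y$ iff $a_i\sqsubseteq r\multimap y$ for all $i$ iff $r\le X(a_i,y)$ for all $i$. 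Once this is established, the tensor property gives $X(\phi(x)\otimes x,y)=\phi(x)\rightarrow X(x,y)$, and taking the infimum finishes the proof. Dually, $(5)\Rightarrow(2)$ uses meets of cotensors, with tensors guaranteeing that meets are enriched. Finally, $(4)\Rightarrow(5)$ is dual to $(3)\Rightarrow(5)$, which closes all the equivalences.
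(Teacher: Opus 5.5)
The paper gives no proof of this theorem; it is quoted from Kelly, Stubbe and Lai--Zhang. So the relevant comparison is with the standard argument in those sources, and your proposal is correct and follows it closely:
\begin{itemize}
\item (1)$\Leftrightarrow$(2) via the weight $\phi=\inf_y(\psi(y)\to X(-,y))$. Its colimit $c$ satisfies $\phi\le X(-,c)$ because $X(c,c)=1$. It also satisfies $X(-,c)\le\phi$ because $\psi\le X(c,-)$, which is where your inequality $\psi(y)\mathbin{\&}\phi(x)\le X(x,y)$ enters.
\item Tensors are the colimits of $r\mathbin{\&}X(-,x)$.
\item $\operatorname{colim}\phi=\bigvee_x\phi(x)\otimes x$, once joins in $X_0$ are known to satisfy $X(\bigvee_i a_i,y)=\inf_i X(a_i,y)$.
\end{itemize}

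The one genuine difference is your (3)$\Rightarrow$(5), where you build $r\multimap y$ by hand as the join $c$ of $S=\{a: r\le X(a,y)\}$. This is correct. For $t=r\to X(x,y)$ one has $r\mathbin{\&}t\le X(x,y)$, so $t\otimes x\in S$, hence $t\otimes x\sqsubseteq c$, i.e.\ $t\le X(x,c)$. Conversely, $X(c,y)=\inf_{a\in S}X(a,y)\ge r$ gives $X(x,c)\le r\to X(x,y)$. This construction has the merit of exhibiting cotensors explicitly, in the style of the adjoint functor theorem.

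It is, however, a detour. Conical cocompleteness already supplies exactly the enriched-join identity that you later extract from cotensors in (5)$\Rightarrow$(1). So the same one-line computation gives (3)$\Rightarrow$(1) directly, and cotensors then come for free from your (1)$\Rightarrow$(2).

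Also, justifying that $r\otimes(-)$ preserves conical joins because ``it is a left adjoint'' is circular at that stage: its right adjoint is the $r\multimap(-)$ you are constructing. The identity $X(r\otimes\bigvee_i a_i,y)=\inf_i X(r\otimes a_i,y)$ follows immediately from the tensor and enriched-join identities, and the verification above does not even need it.
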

\begin{definition}
	Suppose $\{x_i\}_{i \in D}$ is a net and $b$ is an element of a \([0,1]\)-enriched
	category $X$. We say that
	\begin{enumerate} [label={\rm(\arabic*)}]
		\item  $\{x_i\}_{i \in D}$ is forward Cauchy if
		\[
		\sup_{i \in D} \inf_{k \geq j \geq i} X(x_j, x_k) = 1.
		\]
		\item  $b$ is a Yoneda limit of $\{x_i\}_{i \in D}$ if for all $y \in X$,
		\[
		X(b, y) = \sup_{i \in D} \inf_{i \leq j} X(x_j, y).
		\]
	\end{enumerate}
\end{definition} 
\begin{definition}
   A weight \(\phi\) is a forward Cauchy ideal of a \([0,1]\)-enriched category $X$ if \[\phi(-) = \sup_{i \in D} \inf_{j \geq i} X(-, x_j)\] for some forward Cauchy net $\{x_i\}_{i \in D}$ of $X$.  All forward Cauchy ideals of \(X\) are denoted by \(\mathcal{I}X\).
\end{definition}
\begin{lem}[\cite{Zhang2024IntroductoryNO}, Lemma 9.11]
    Suppose $\{x_i\}_{i\in D}$ is a forward Cauchy net of a \([0,1]\)-enriched category $X$. Then, an element $b$ of $X$ is a Yoneda limit of $\{x_i\}_{i\in D}$ if and only if $b$ is a colimit of the forward Cauchy weight $\sup_{i\in D} \inf_{j\geq i} X(-,x_j)$.
\end{lem}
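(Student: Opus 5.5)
Since each weight has at most one colimit up to isomorphism, and both conditions are statements about $X(b,y)$ for all $y\in X$, the lemma reduces to one identity. Write $\phi=\sup_{i\in D}\inf_{j\ge i}X(-,x_j)$. Comparing the definition of a Yoneda limit with that of $\operatorname{colim}\phi$, it suffices to prove that for every $y\in X$
\[
\mathcal{P}X\bigl(\phi,X(-,y)\bigr)=\sup_{i\in D}\inf_{j\ge i}X(x_j,y)=:s_y .
\]
Then $X(b,y)=s_y$ for all $y$ (Yoneda limit) is literally the same condition as $X(b,y)=\mathcal{P}X(\phi,X(-,y))$ for all $y$ (colimit). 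Throughout we use that $\&$ is left-continuous, so it distributes over arbitrary suprema in each variable, together with the adjunction in Proposition \ref{properties of residual implication}(2).

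\textbf{Step 1: $s_y\le\mathcal{P}X(\phi,X(-,y))$.} By adjunction this means $\phi(x)\&s_y\le X(x,y)$ for every $x\in X$. Expanding both suprema and distributing $\&$, it is enough to show, for arbitrary $i,i'\in D$, that
\[
\inf_{j\ge i}X(x,x_j)\;\&\;\inf_{j\ge i'}X(x_j,y)\le X(x,y).
\]
Because $D$ is directed, pick $k\ge i,i'$. The left side is then at most $X(x,x_k)\&X(x_k,y)$, which is at most $X(x,y)$ by transitivity. This direction does not use the Cauchy property.

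\textbf{Step 2: $\mathcal{P}X(\phi,X(-,y))\le s_y$.} This is where forward Cauchyness enters, and I expect it to be the main step. Put $t=\mathcal{P}X(\phi,X(-,y))$ and fix $r<1$. By the forward Cauchy condition there is $i_0$ with $X(x_j,x_k)\ge r$ whenever $i_0\le j\le k$. Hence for each $k\ge i_0$ we get $\phi(x_k)\ge\inf_{j\ge k}X(x_k,x_j)\ge r$. Since $t\le\phi(x_k)\to X(x_k,y)$, adjunction and monotonicity give $r\&t\le\phi(x_k)\&t\le X(x_k,y)$ for all $k\ge i_0$. Therefore
\[
r\&t\le\inf_{k\ge i_0}X(x_k,y)\le s_y .
\]
Finally let $r\nearrow1$. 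Left-continuity gives $\sup_{r<1}r\&t=1\&t=t$, so $t\le s_y$.

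Combining the two steps yields the identity, and hence the equivalence. The only delicate point is Step 2: one must observe that the Cauchy condition forces $\phi(x_k)$ to be eventually close to $1$, and then use left-continuity of $\&$ to remove the approximation parameter $r$.
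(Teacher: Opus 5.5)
Your proof is correct: the reduction to the identity $\mathcal{P}X(\phi,X(-,y))=\sup_{i\in D}\inf_{j\ge i}X(x_j,y)$ is exactly what is needed, and the two inequalities are handled properly. Step 1 uses only directedness and transitivity, and Step 2 uses forward Cauchyness to get $\phi(x_k)\ge r$ eventually, then left-continuity of $\&$ to let $r\nearrow 1$.

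The paper gives no proof of its own; it imports the lemma from the cited notes. Your argument is the standard one for this fact, and the same computation works verbatim with any weight $\psi$ in place of $X(-,y)$, giving $\mathcal{P}X(\phi,\psi)=\sup_{i}\inf_{j\ge i}\psi(x_j)$.
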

\begin{definition}
	A $[0,1]$-enriched category is Yoneda complete if every forward
Cauchy net has a Yoneda limit. Every Yoneda limit is unique up to
isomorphism; if the category is separated, it is unique. In other words, every forward Cauchy ideal of $X$ has a colimit.
\end{definition}
\section{Fuzzy Order on \(\mathbb{E}^1\)}
Defining a fuzzy order on fuzzy numbers can be viewed as defining a fuzzy order on a function space. Early on, a fuzzy order on function spaces was defined based on set inclusion: for any set $X$ and any
functions $f, g \colon X \to [0,1]$ (i.e., membership functions of fuzzy subsets of $X$),
\[
\text{sub}_X (f,g)=\inf_{x\in X}\left(f(x)\rightarrow g(x)\right).
\]
The present definition is inspired by the inclusion-based fuzzy order on function spaces.
\begin{definition}
    Let $u\in \mathbb{E}^1$, and define its left part \( u^\ell \) and right part \( u^r \) as functions  \( \mathbb{R} \to [0,1] \) by:
\[
u^\ell(x) =
\begin{cases}
	0, & x < u_0^-; \\
	u(x), & x \in [u_0^-, u_1^-]; \\
	1, & x > u_1^-.
\end{cases}
\qquad
u^r(x) =
\begin{cases}
	1, & x < u_1^-; \\
	u(x), & x \in [u_1^-, u_0^+]; \\
	0, & x > u_0^+.
\end{cases}
\]
Where \( [u]_0=[u^-_0, u^+_0]\)  denotes the left and right endpoints of the \(0\)-level set; \([u]_1=[u^-_1, u^+_1]\) denotes the  left and right endpoints of the \(1\)-level set.
\end{definition}
\begin{prop}\label{fuzzy number u(x) re}
    Let \( u \in \mathbb{E}^1 \) and \( u = (u^\ell, u^r) \). Then:
\begin{enumerate}[label={\rm(\arabic*)}]
    \item \( u^\ell(x),  u^r(x)\)  are both upper-semicontinuous functions. \( u^\ell(x)\) is a non-decreasing function on \( \mathbb{R} \) and  \(u^r(x)\) is a non-increasing function on \( \mathbb{R} \);
    \item There exist \(x_0,x_1\in \mathbb{R}\) such that $u^\ell(x_0)=u^r(x_1)=0$;
    \item  For every \( x\in \mathbb{R} \), \( u^\ell(x)\vee u^r(x)=1\).
\end{enumerate}
Conversely, suppose that \( (\alpha(x), \beta(x)) \) satisfies \((1)-(3)\), there exists a unique \( u \in \mathbb{E}^1 \) with \( u^\ell = \alpha \) and \( u^r = \beta \).
\end{prop}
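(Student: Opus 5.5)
The plan is to verify (1)–(3) directly from the four defining properties of $\mathbb{E}^1$. For the converse, I would reconstruct $u$ as the pointwise minimum $u=\alpha\wedge\beta$.

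\textbf{Preliminaries.} For $u\in\mathbb{E}^1$, each level set $[u]_\alpha$ with $\alpha\in(0,1]$ is closed (by upper-semicontinuity), convex (by convexity), bounded (by compact support) and nonempty (by normality). So $[u]_1=[u_1^-,u_1^+]$ and $[u]_0=[u_0^-,u_0^+]$ are well-defined compact intervals with $u_0^-\le u_1^-\le u_1^+\le u_0^+$. Convexity then gives that $u$ is non-decreasing on $(-\infty,u_1^-]$ and non-increasing on $[u_1^+,\infty)$. Indeed, for $y<x\le u_1^-$ we have $u(x)\ge\min\{u(y),u(u_1^-)\}=u(y)$.

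\textbf{Forward direction.}
\begin{itemize}
\item Monotonicity of $u^\ell$ follows piecewise: it is $0$ on $(-\infty,u_0^-)$, non-decreasing between $u_0^-$ and $u_1^-$, and $1$ afterwards. The same argument gives that $u^r$ is non-increasing.
\item Upper-semicontinuity only needs checking at the junction points. At $u_0^-$ the left-hand values are $0$, and on the right the function agrees with $u$, which is u.s.c. At $u_1^-$ the value is $1$, so u.s.c.\ is automatic. For $u^r$, the analogous check at $u_0^+$ is the same.
\item For (2), take $x_0<u_0^-$ and $x_1>u_0^+$.
\item For (3), if $x\le u_1^-$ then $u^r(x)=1$ (either because $x<u_1^-$, or because $u(u_1^-)=1$). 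If $x>u_1^-$ then $u^\ell(x)=1$.
\end{itemize}

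\textbf{Converse.} Given $(\alpha,\beta)$ satisfying (1)–(3), define $u=\alpha\wedge\beta$.
\begin{itemize}
\item \emph{U.s.c.} A minimum of u.s.c.\ functions is u.s.c.
\item \emph{Convexity.} Each strict level set of $u$ is the intersection of an up-ray (from $\alpha$) and a down-ray (from $\beta$), hence an interval. This is equivalent to the convexity inequality.
\item \emph{Compact support.} We have $\{u>0\}\subseteq[x_0,x_1]$, so its closure is compact.
\item \emph{Normality (the key step).} Since $\beta(x_1)=0$, (3) gives $\alpha(x_1)=1$. By u.s.c.\ and monotonicity, $\{\alpha=1\}=\{\alpha\ge 1\}$ is a closed up-ray $[a,\infty)$, and $a$ is finite because $\alpha(x_0)=0$. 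For $x<a$ we have $\alpha(x)<1$, so $\beta(x)=1$ by (3). Then $\{\beta\ge1\}$ is closed and contains $(-\infty,a)$, hence contains $a$. Thus $u(a)=1$.
\end{itemize}

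\textbf{Recovering $\alpha$ and $\beta$, and uniqueness.} This is the fussiest step: identifying $u_1^-=a$ and then checking $u^\ell=\alpha$ and $u^r=\beta$ region by region.
\begin{itemize}
\item \emph{$u_1^-=a$.} For $x<a$ we have $\alpha(x)<1$, so $u(x)<1$, while $u(a)=1$.
\item \emph{$u^\ell=\alpha$.} For $x<u_0^-$ we have $x<a$, so $u(x)=0$ and $\beta(x)=1$; this forces $\alpha(x)=0$. On $[u_0^-,a]$ we have $\beta=1$, so $u=\alpha$. For $x>a$ we have $\alpha(x)=1$.
\item \emph{$u^r=\beta$.} For $x<a$, $\beta(x)=1$. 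On $[a,u_0^+]$ we have $\alpha=1$, so $u=\beta$. For $x>u_0^+$ we have $u(x)=0$ and $\alpha(x)=1$, so $\beta(x)=0$.
\item \emph{Uniqueness.} Any $u\in\mathbb{E}^1$ satisfies $u=u^\ell\wedge u^r$. On each of the regions above, one of the two parts equals $1$ and the other equals $u$; on $(-\infty,u_0^-)$ both $u^\ell$ and $u$ vanish. Hence $u$ is determined by the pair $(u^\ell,u^r)$.
\end{itemize}
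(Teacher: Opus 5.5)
Your proof is correct and complete. The paper states this proposition without proof and goes on to use it, e.g.\ to identify pairs $(f,g)\in\widetilde{\mathcal F}\times\widetilde{\mathcal G}$ satisfying (3) with elements of $\mathbb{E}^1_{[-M,M]}$, so there is no argument of the paper's to compare yours against.

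Your proposal supplies exactly the omitted verification:
\begin{itemize}
\item the explicit inverse $u=\alpha\wedge\beta$;
\item the normality step, which uses closedness of $\{\alpha\ge 1\}$ and $\{\beta\ge 1\}$ to produce a point $a$ with $\alpha(a)=\beta(a)=1$;
\item the identification $u_1^-=a$, which makes the region-by-region recovery of $\alpha$ and $\beta$ work;
\item the identity $u=u^\ell\wedge u^r$, which gives uniqueness.
\end{itemize}
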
 

Let \( f: \mathbb{R} \to [0,1] \) be a function. Suppose there exists an \(M\in \mathbb{R} \) such that \(f\) satisfies some of the following properties:
\begin{enumerate}[label={\rm(\arabic*)}]
	\item   For any \(x<-M\), \( f(x) = 0 \) and for any \(x\ge M\), \(f(x)=1\).
    \item \( f(x) \) is non-decreasing.
    \item \( f(x) \) is upper-semicontinuous.
    \item For any \(x\le -M\), \( f(x) = 1 \) and for any \(x>M\), \(f(x)=0\).
     \item \( f(x) \) is non-increasing.
\end{enumerate}

Let \(\mathcal{F}_{[-M,M]}\) denote the set of functions \(f: \mathbb{R} \to [0,1]\) satisfying conditions (1)--(2). Those functions in this set that also satisfy property (3) will be denoted by \(\tilde{\mathcal{F}}_{[-M,M]}\).  
Similarly, let \(\mathcal{G}_{[-M,M]}\) be the set of functions \(g: \mathbb{R} \to [0,1]\) satisfying conditions (4)--(5), and let \(\tilde{\mathcal{G}}_{[-M,M]}\) denote the subset of these functions that also satisfy property (3).

For brevity, the following shorthand notation is adopted:  
\[
\mathcal{F} \equiv \mathcal{F}_{[-M,M]}, \quad \tilde{\mathcal{F}} \equiv \tilde{\mathcal{F}}_{[-M,M]}, \quad \mathcal{G} \equiv \mathcal{G}_{[-M,M]}, \quad \tilde{\mathcal{G}} \equiv \tilde{\mathcal{G}}_{[-M,M]}.
\]  
Unless stated otherwise, this convention is followed.

Throughout this paper, we fix \(M>0\). Let
\(\mathbb{E}^1_{[-M,M]}\) denote the set of all fuzzy numbers whose
support is contained in \([-M,M]\).
For every pair $(f,g)\in\widetilde{\mathcal F}
\times\widetilde{\mathcal G}$ satisfying condition (3) of Proposition \ref{fuzzy number u(x) re}, there exists a unique $u\in\mathbb E^1_{[-M,M]}$
such that \(u^\ell = f\) and \(u^r = g\).

\begin{definition}

Let \( u, v \in \mathbb{E}^1 \) and \( \rightarrow \) be the residual implication of a  left-continuous t-norm \( \& \). Define
\[
P(u, v) = \inf_{x \in \mathbb{R}} \left( (v^\ell(x) \rightarrow u^\ell(x)) \wedge (u^r(x) \rightarrow v^r(x)) \right).
\]

\end{definition}
\begin{remark}
\begin{enumerate}[label={\rm(\arabic*)}]
 \item \(\mathbb{R}\) can be naturally embedded into \((E^1, P)\), where each real number is represented by its characteristic function.
 \begin{align*}
	 u(x)=\chi_{u}(x)=\begin{cases}
	 	1,&x=u;\\
	 	0,& otherwise.
	 \end{cases}, v=\chi_{v}=\begin{cases}
	 1,& x=v;\\
	 0,&otherwise.
 \end{cases}
\end{align*}
 The fuzzy order on \((E^1, P)\) is  a generalization of the natural order on real numbers.
\item Let \(\mathbb{IR}\) denote all the closed intervals of \(\mathbb{R}\).  The natural order on \(\mathbb{IR}\) is defined by  \[[a,b]\le [c,d]\iff a\le c, b\le d. \]
  Each closed interval is represented by its characteristic function. The fuzzy order on \((E^1, P)\)  generalizes the natural order on \(\mathbb{IR}\) as well.
    \item Define \(u\le v\)  if \(P(u,v)=1\). Then 
\begin{align*}
P(u,v)=1 \iff& v^\ell (x)\le u^\ell (x), u^r(x)\le v^r(x), \text{ for all \(x\in \mathbb{R}\)}.\\
\iff& [u]^-_{\alpha}\le [v]^-_{\alpha}, [u]^+_{\alpha}\le [v]^+_{\alpha}, \text{ for all \(\alpha\in [0,1]\)}.
\end{align*}
Thus, the order induced on the base set by our construction coincides with the order defined via  \(\alpha\)-cuts in \cite{Klir1995FuzzySA}. In other words, the latter is precisely the crisp order induced by the present fuzzy order. Since this crisp order is defined componentwise on the $\alpha$-cuts, it is clearly compatible
with the usual fuzzy addition and positive scalar multiplication: if $P(u,v)=1$, then
$P(u \oplus w, v \oplus w)=1$ and $P(\lambda u, \lambda v)=1$ for any $w\in E^1$ and $\lambda>0$.
For the general fuzzy case $P(u,v)\in(0,1)$, we note that full compatibility is not guaranteed
due to the possible mismatch between the t-norm used in the residual implication and that used
in the extension principle for fuzzy arithmetic. The underlying ordering induced by the fuzzy order satisfies the following property.
\end{enumerate}
\end{remark}

\begin{thm}[\cite{Fan2004}, Theorem 2]\label{continuous point of super and infimum}
	Suppose that \(\{u_i\}_{i\in D}\) is a family of \(\mathbb{E}^1_{[-M,M]}\), \(s\) and \(m\) are the colimit of the conical weight \(\phi\) and the limit of the conical coweight \(\psi\), respectively. Then,
	\begin{align*}
		s^\ell(x)=\inf_{i\in D} u^\ell_i(x), \quad
		s^r(x)=\begin{cases}
			\sup_{i\in D}u^r_i(x),& x\in [-M,M] \backslash Dis(s)\\
			\lim_{x_n\nearrow x}\sup_{i\in D}u^r_i(x_n),& otherwise.
		\end{cases},
	\end{align*}
	where \(Dis (s)=\{x\mid\text{\(x\) is a discontinuous point of } s \}\) is a countable set.
	\begin{align*}
		m^\ell(x)=\begin{cases}
			\sup_{i\in D}u^\ell_i(x),& x\in [-M,M]\backslash Dis(m)\\
				\lim_{x_n\searrow x} \sup_{i\in D}u^\ell_i(x_n), & otherwise.
		\end{cases}, \quad
		m^r(x)= \inf_{i\in D} u^r_i(x),
	\end{align*}
	where  \(Dis (m)=\{x\mid \text{\(x\) is a discontinuous point of } m \}\) is a countable set.
\end{thm}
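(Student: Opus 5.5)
The plan is to construct the colimit $s$ explicitly and verify the defining identity; the limit $m$ then follows by the symmetric argument. Throughout, $X=(\mathbb{E}^1_{[-M,M]},P)$ and $\phi=\sup_{i\in D}P(-,u_i)$.

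\textbf{Reducing the colimit condition.} By Proposition~\ref{properties of residual implication}(6) and the Yoneda-type identity $\mathcal{P}X(P(-,u_i),P(-,v))=P(u_i,v)$, we get
\[
\mathcal{P}X(\phi,P(-,v))=\inf_{i\in D}P(u_i,v).
\]
So it suffices to exhibit $s\in\mathbb{E}^1_{[-M,M]}$ with $P(s,v)=\inf_{i}P(u_i,v)$ for every $v$.

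\textbf{Constructing $s$.} Put $f=\inf_i u_i^\ell$ and $h=\sup_i u_i^r$, and let $g$ be the left-continuous regularization of $h$: $g(x)=h(x)$ where $h$ is continuous, and $g(x)=\lim_{x_n\nearrow x}h(x_n)$ otherwise. The discontinuity set of $h$ is countable because $h$ is monotone.

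We then check that $(f,g)\in\widetilde{\mathcal F}\times\widetilde{\mathcal G}$ satisfies the conditions of Proposition~\ref{fuzzy number u(x) re}.
\begin{itemize}
\item $f$ is non-decreasing and upper-semicontinuous, being an infimum of such functions. Since $f$ is non-decreasing, upper semicontinuity here means right-continuity.
\item $g$ is non-increasing and left-continuous, hence upper-semicontinuous.
\item The support conditions are inherited from the $u_i$, so everything stays inside $[-M,M]$.
\item For $f\vee g=1$: if $f(x)<1$, then $u_i^\ell(x)<1$ for some $i$, so $u_i^r(x)=1$. Hence $h(y)=1$ for all $y\le x$ by monotonicity, and therefore $g(x)=1$.
\end{itemize}
This yields a unique $s$ with $s^\ell=f$ and $s^r=g$, which matches the formulas in the statement.

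\textbf{Verifying $P(s,v)=\inf_i P(u_i,v)$.} Split $P(s,v)$ into its $\ell$-part and $r$-part.
\begin{itemize}
\item For the $\ell$-part, Proposition~\ref{properties of residual implication}(7) gives $v^\ell(x)\to\inf_i u_i^\ell(x)=\inf_i\bigl(v^\ell(x)\to u_i^\ell(x)\bigr)$.
\item For the $r$-part at points where $g=h$, Proposition~\ref{properties of residual implication}(6) gives $h(x)\to v^r(x)=\inf_i\bigl(u_i^r(x)\to v^r(x)\bigr)$.
\end{itemize}
Swapping infima then gives the identity, provided the countably many discontinuity points do not lower the infimum.

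\textbf{Main obstacle: the discontinuity points.} At a discontinuity point $x$ we have $g(x)\ge h(x)$, which gives only $g(x)\to v^r(x)\le h(x)\to v^r(x)$. We need the reverse comparison with the infimum over the other points. Take $x_n\nearrow x$ with $x_n$ continuity points of $h$. Then $g(x_n)=h(x_n)\to g(x)$, and $v^r(x_n)\to v^r(x)$ because $v^r$ is non-increasing and upper-semicontinuous, hence left-continuous. By the joint upper semicontinuity of the residuum (Proposition~\ref{left-continuous is jointly upper semicontinuous function}),
\[
g(x)\to v^r(x)\ \ge\ \limsup_n\bigl(g(x_n)\to v^r(x_n)\bigr)\ \ge\ \inf_{y\notin Dis(s)}\bigl(g(y)\to v^r(y)\bigr).
\]
So the infimum over all $x$ equals the infimum over continuity points, and the identity follows.

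\textbf{The limit $m$.} The case of $m$ is dual: use the coweight $\psi=\sup_i P(u_i,-)$, take $m^r=\inf_i u_i^r$, and let $m^\ell$ be the right-continuous regularization of $\sup_i u_i^\ell$. The same argument applies with limits taken from the right, since $v^\ell$ is right-continuous.
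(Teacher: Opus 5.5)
Your proof is correct. It follows the same skeleton as the paper's own verification of this fact, but handles the key step with a different lemma.

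The paper does not prove this statement; it quotes it from Fan (2004). The place where it checks that $s=(\inf_i u_i^\ell,\widetilde{\sup_i u_i^r})$ is the enriched colimit is the proof of Proposition~\ref{conically cocomplete and conically complete in E}. That proof has the same skeleton as yours: the Yoneda reduction $\mathcal{P}X(\phi,P(-,w))=\inf_i P(u_i,w)$, then exchange of infima via items (6) and (7) of Proposition~\ref{properties of residual implication}.

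The routes differ at the crux, namely absorbing the upper-semicontinuous modification of $\sup_i u_i^r$.
\begin{itemize}
\item The paper uses Proposition~\ref{upper-semicontinuous except for countable point eq}, a sequence-free adjunction argument. From $g(y)\mathbin{\&}t\le g'(y)$ for all $y$ and the left-continuity of $g'$, it obtains $\widetilde g(x)\mathbin{\&}t\le g'(x)$. That proposition is stated for continuous $\&$ because it writes $(\inf_{y<x}g(y))\mathbin{\&}t=\inf_{y<x}(g(y)\mathbin{\&}t)$. Only the inequality $\le$ is actually used, and that follows from monotonicity alone.
\item You instead approach each point along $x_n\nearrow x$ and invoke the joint upper semicontinuity of the residuum (Proposition~\ref{left-continuous is jointly upper semicontinuous function}). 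This is the same device used in Theorem~\ref{continuous and except for countable point}.
\end{itemize}
The paper's route is shorter and purely algebraic. Yours makes it explicit that left-continuity of $\&$ suffices, and it links the colimit formula to the fact that $P$ ignores discontinuity points. You also verify $s\in\mathbb{E}^1_{[-M,M]}$ explicitly, in particular $f\vee g=1$, which the paper simply calls clear.

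Two small additions are needed.
\begin{itemize}
\item The statement is phrased with $s$ and $m$ given as the colimit and limit. After showing that your constructed $s$ is a colimit, you must invoke separatedness of $P$ (uniqueness of colimits) to conclude that the given $s$ equals yours; the same applies to $m$.
\item Your claim that the support conditions are inherited requires $D\neq\emptyset$. For $D=\emptyset$ the pointwise formulas give $s^\ell\equiv 1$ and $s^r\equiv 0$, which is not a fuzzy number. In that case the colimit is the least element $\chi_{-M}$.
\end{itemize}
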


\begin{prop}
	 Let $\&$ be a left-continuous t-norm. $P(u,v)$ is a fuzzy order on $\mathbb{E}^1$ and  is separated.
\end{prop}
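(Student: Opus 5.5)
We check the three axioms of an $\&$-fuzzy partial order (reflexivity, transitivity, antisymmetry) directly from the definition of $P$, using only Proposition~\ref{properties of residual implication} and the decomposition $u=(u^\ell,u^r)$ of Proposition~\ref{fuzzy number u(x) re}.

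\emph{Reflexivity.} For every $u\in\mathbb{E}^1$ and $x\in\mathbb{R}$ we have $u^\ell(x)\rightarrow u^\ell(x)=1$ and $u^r(x)\rightarrow u^r(x)=1$ by Proposition~\ref{properties of residual implication}(1). Taking the infimum over $x$ gives $P(u,u)=1$.

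\emph{Transitivity.} Write $P(u,v)=\inf_x a_x(u,v)$ with $a_x(u,v)=(v^\ell(x)\rightarrow u^\ell(x))\wedge(u^r(x)\rightarrow v^r(x))$. Fix $u,v,w$ and $x\in\mathbb{R}$. By monotonicity of $\&$,
\[
P(v,w)\,\&\,P(u,v)\le \bigl(w^\ell(x)\rightarrow v^\ell(x)\bigr)\,\&\,\bigl(v^\ell(x)\rightarrow u^\ell(x)\bigr)\le w^\ell(x)\rightarrow u^\ell(x),
\]
where the last step is Proposition~\ref{properties of residual implication}(3) together with commutativity of $\&$. Similarly,
\[
P(v,w)\,\&\,P(u,v)\le \bigl(u^r(x)\rightarrow v^r(x)\bigr)\,\&\,\bigl(v^r(x)\rightarrow w^r(x)\bigr)\le u^r(x)\rightarrow w^r(x).
\]
So $P(v,w)\& P(u,v)$ is below both components at every $x$, hence below their meet. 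Taking the infimum over $x$ gives $P(v,w)\& P(u,v)\le P(u,w)$. Thus $P$ is a $[0,1]$-enriched category structure on $\mathbb{E}^1$.

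\emph{Separatedness (antisymmetry).} Suppose $P(u,v)=P(v,u)=1$. By Proposition~\ref{properties of residual implication}(1), $P(u,v)=1$ gives $v^\ell\le u^\ell$ and $u^r\le v^r$ pointwise, and $P(v,u)=1$ gives the reverse inequalities. Hence $u^\ell=v^\ell$ and $u^r=v^r$. The uniqueness part of Proposition~\ref{fuzzy number u(x) re} then yields $u=v$.

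This uniqueness is the only step with real content. If we prefer to verify it directly, we reconstruct $u$ from its parts by $u=u^\ell\wedge u^r$. For $x<u_1^-$, $u^r(x)=1$ and $u^\ell(x)$ equals $u(x)$ on $[u_0^-,u_1^-]$ and $0$ below $u_0^-$, where $u$ also vanishes. The case $x>u_1^-$ is symmetric: there $u^\ell(x)=1$ and $u^r(x)=u(x)$, using that $u=0$ outside $[u]_0$. One point to check is that $u(x)=1$ on the core, so the identity holds on $[u_1^-,u_1^+]$.

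Since $u=u^\ell\wedge u^r$, equal parts force $u=v$. Hence $P$ is a separated fuzzy order.
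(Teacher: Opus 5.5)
Your proof is correct and takes essentially the same route as the paper: reflexivity from $x\to x=1$, transitivity by bounding the left and right residual components at each $x$ via monotonicity of $\&$ and Proposition~\ref{properties of residual implication}(3), and antisymmetry from the resulting pointwise inequalities. The one difference is that you justify the last step $u=v$ through the reconstruction $u=u^\ell\wedge u^r$, whereas the paper only asserts it.
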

\begin{proof}
Reflexivity is clear: \( P(u, u) = 1 \). For transitivity, take \( u, v, w \in \mathbb{E}^1 \). 
 Using property (3) of the implication, we verify:
\begin{align*}
    P(u, w) \& P(w, v) &=\inf_{x\in \mathbb{R}} \left[(w^\ell(x) \rightarrow u^\ell(x)) \wedge (u^r(x) \rightarrow w^r(x)) \right] \& \inf_{x\in \mathbb{R}} \left[ (v^\ell(x) \rightarrow w^\ell(x)) \wedge (w^r(x) \rightarrow v^r(x))\right] \\
    &\le \inf_{x\in \mathbb{R}} \left\{\left[(w^\ell(x) \rightarrow u^\ell(x)) \wedge (u^r(x) \rightarrow w^r(x)) \right]\& \left[(v^\ell(x) \rightarrow w^\ell(x)) \wedge (w^r(x) \rightarrow v^r(x))\right]\right\} \\
    &\le \inf_{x\in \mathbb{R}} \left\{\left[(w^\ell(x) \rightarrow u^\ell(x)) \& (v^\ell(x) \rightarrow w^\ell(x)) \right]\wedge \left[(u^r(x) \rightarrow w^r(x))  \& (w^r(x) \rightarrow v^r(x))\right]\right\}\tag{Since \(\wedge\) is the largest t-norm }\\
    &\le \inf_{x\in \mathbb{R}} (v^\ell(x) \rightarrow u^\ell(x)) \wedge (u^r(x) \rightarrow v^r(x))=P(u,v).
\end{align*}
For antisymmetry, suppose \(P(u,v)=P(v,u)=1\). Then \(u=v\).
\end{proof}
\begin{thm}\label{continuous and except for countable point}
	 	Let $\&$ be a  left-continuous  t-norm. For any \( u, v \in \mathbb{E}^1 \), let \[ D = \{x \mid x \text{ is a point of discontinuity of } u \text{ or } v \}.\]  Then
	\[
	P'(u, v) := \inf_{x \in \mathbb{R}\setminus D} \left[ (v^\ell(x) \rightarrow u^\ell(x)) \wedge (u^r(x) \rightarrow v^r(x)) \right] = P(u, v).
	\]
\end{thm}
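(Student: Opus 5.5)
The plan is to prove the two inequalities $P(u,v)\le P'(u,v)$ and $P'(u,v)\le P(u,v)$. The first is immediate: $P'$ is an infimum over the smaller index set $\mathbb{R}\setminus D$.

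\textbf{Density of continuity points.} For the reverse inequality I first record that $D$ is countable, so $\mathbb{R}\setminus D$ is dense. By Proposition \ref{fuzzy number u(x) re}, $u^\ell$ and $v^\ell$ are non-decreasing, and $u^r$ and $v^r$ are non-increasing. Each of these functions therefore has at most countably many discontinuities. Moreover, a discontinuity point of $u$ is a discontinuity point of $u^\ell$ or of $u^r$, and likewise for $v$. Hence $D$ is contained in a countable set. Consequently every $x\in\mathbb{R}$ is the limit of a decreasing sequence in $\mathbb{R}\setminus D$, and also of an increasing one.

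\textbf{Reduction.} It then suffices to show that for each fixed $x\in D$ both
\[
v^\ell(x)\rightarrow u^\ell(x)\ \ge\ P'(u,v)
\qquad\text{and}\qquad
u^r(x)\rightarrow v^r(x)\ \ge\ P'(u,v).
\]
The key observation is one-sided continuity. A non-decreasing upper-semicontinuous function is right-continuous: for $y\searrow x$ we have $f(y)\ge f(x)$ by monotonicity, and $\limsup f(y)\le f(x)$ by upper semicontinuity. Symmetrically, a non-increasing upper-semicontinuous function is left-continuous. By Proposition \ref{fuzzy number u(x) re}(1) these facts apply to $u^\ell, v^\ell$ and to $u^r, v^r$ respectively.

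\textbf{Left part.} Choose $y_n\searrow x$ with $y_n\notin D$. Then $(v^\ell(y_n),u^\ell(y_n))\to(v^\ell(x),u^\ell(x))$. Each term satisfies $v^\ell(y_n)\rightarrow u^\ell(y_n)\ge P'(u,v)$. By Proposition \ref{left-continuous is jointly upper semicontinuous function} the residuum $I$ is jointly upper-semicontinuous, so its upper level set $\{I\ge P'(u,v)\}$ is closed. Therefore $v^\ell(x)\rightarrow u^\ell(x)\ge P'(u,v)$.

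\textbf{Right part.} Choose $y_n\nearrow x$ with $y_n\notin D$. Then $(u^r(y_n),v^r(y_n))\to(u^r(x),v^r(x))$, and the same closedness argument gives $u^r(x)\rightarrow v^r(x)\ge P'(u,v)$.

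\textbf{Conclusion.} Taking the minimum of the two inequalities and then the infimum over all $x\in\mathbb{R}$ yields $P(u,v)\ge P'(u,v)$, which completes the proof.

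The main subtlety is choosing the correct side of approximation for each component: from the right for the $\ell$-parts and from the left for the $r$-parts. With the wrong side the approximating pairs need not converge to the values at $x$, and upper semicontinuity of $I$ would give nothing.
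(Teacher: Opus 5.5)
Your proof is correct and takes essentially the same route as the paper. Both arguments approximate from the right by continuity points for the $\ell$-parts (using right-continuity) and from the left for the $r$-parts (using left-continuity), and both conclude via the joint upper semicontinuity of the residuum from Proposition~\ref{left-continuous is jointly upper semicontinuous function}. The only difference is presentational: you argue directly through the closedness of the level set $\{I\ge P'(u,v)\}$, whereas the paper argues by contradiction, combining a $\gamma$-margin below $P'(u,v)$ with a $\limsup$ bound.
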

\begin{proof}
The inequality \(P'(u,v)\ge P(u,v)\) is clear. It remains to show that \(P'(u,v)\le P(u,v) \).
Since every fuzzy number is monotone on each side, it has
at most countably many discontinuity points. Hence \(D\) is at most
countable and \(\mathbb R\setminus D\) is dense.

Let
\[
A(x)=v^\ell(x)\rightarrow u^\ell(x),\qquad
B(x)=u^r(x)\rightarrow v^r(x),
\]
and
\[
H(x)=A(x)\wedge B(x).
\]
Let \(L=P(u,v)\). If \(L=1\), then
\(P'(u,v)=1=P(u,v)\). Assume \(L<1\), and suppose, for
contradiction, that \(L<P'(u,v)\). Choose
\[
0<\gamma<\min\{1-L,P'(u,v)-L\}.
\]
By the definition of \(L\), there exists \(x_0\in\mathbb R\) such that
\[
H(x_0)<L+\gamma.
\]

Suppose first that
\[
A(x_0)<L+\gamma.
\]
Since \(u^\ell\) and \(v^\ell\) are non-decreasing and
upper-semicontinuous, they are right-continuous. Hence we may choose
a sequence \(x_n\searrow x_0\) with \(x_n\notin D\), and
\[
v^\ell(x_n)\to v^\ell(x_0),\qquad
u^\ell(x_n)\to u^\ell(x_0).
\]
Since the residuum is jointly upper semicontinuous by Proposition \ref{left-continuous is jointly upper semicontinuous function},
\[
\limsup_{n\to\infty}A(x_n)\le A(x_0)<L+\gamma.
\]
Therefore \(A(x_n)<L+\gamma\) for some \(n\), and hence
\[
P'(u,v)\le H(x_n)\le A(x_n)<L+\gamma<P'(u,v),
\]
a contradiction.

The case \(B(x_0)<L+\gamma\) is analogous. Since \(u^r\) and
\(v^r\) are non-increasing and upper-semicontinuous, they are
left-continuous. Choose \(x_n\nearrow x_0\) with \(x_n\notin D\). Then
\[
\limsup_{n\to\infty}B(x_n)\le B(x_0)<L+\gamma,
\]
which again yields
\[
P'(u,v)< L+\gamma<P'(u,v),
\]
a contradiction.

Thus \(P'(u,v)=P(u,v)\).
\end{proof}

Theorem \ref{continuous and except for countable point} concerns any two elements \(u\) and \(v\) in \( \mathbb{E}^1 \). It shows that the value \(P'(u, v)\) coincides with \( P(u, v) \), where \(D\) is the set of discontinuity points of \(u\) or \(v\). In other words, if we exclude the points of discontinuity and evaluate only the "well-behaved" points, the resulting infimum is unchanged. 
 
The implication expressions of common t-norms and their corresponding fuzzy order values are listed in Table \ref{tab:fuzzy_order_tnorm}.

\begin{exmp}
Consider the triangular fuzzy numbers \( u = (0,1,3) \) and \( v = (-1,2,2.5) \), illustrated in Figure \ref{fuzzy number u,v}. The value of their fuzzy order \( P(u, v) \) is computed in Table \ref{tab:fuzzy_order_tnorm}. The result shows that \( P(u, v) \) is sensitive to the choice of t-norms. The Łukasiewicz t-norm  and Nilpotent Minimum reflect a non-trivial fuzzy order between \( u \) and \( v \), whereas  the other two t-norms yield \( P(u, v) = 0 \). This is because the Gödel and product implications drop to zero as soon as \(v^\ell(x) > u^\ell(x)=0\) for some $x$. These observations indicate that the distribution and structural properties of fuzzy numbers influence the value of the fuzzy ordering relation. 

From an applied perspective, the choice of the t-norm reflects the decision-maker's tolerance for how strictly the containment between
membership functions should be evaluated. The {\L}ukasiewicz and Nilpotent Minimum t-norms permit a graded comparison even when the supports of two fuzzy numbers partially diverge, whereas the G\"odel and product t-norms impose a stricter criterion: $P(u,v)=0$ whenever the support of one fuzzy number extends beyond that of the other at any point. Consequently, the selection of a specific t-norm should be guided by the desired degree of tolerance to support divergence in the intended application. It should be noted, however, that when $P(u,v)=1$ the fuzzy order degenerates to the crisp $\alpha$-cut componentwise order, which is independent of the choice of t-norm.
	\begin{figure}[htbp]	
	\centering
	\begin{minipage}[t]{0.8\textwidth}
		\centering
		\begin{tikzpicture}	[scale=2]
			\draw[->] (-2,0)--(4,0);
			\draw[->] (0,-0.2)--(0,1.2); 
			\draw (0,0)--(1,1)--(3,0);
			\draw (-1,0)--(2,1)--(2.5,0);
			\draw[dashed](1,0)--(1,1);
			\draw[dashed](2,1)--(2,0);
			\draw[dashed](0,1)--(2,1);
			\draw[dashed](1/2,1/2)--(1/2,0);
			\draw[dashed](1/2,1/2)--(0,1/2);
			\draw[fill] (0,1/3) circle(.01);
			\draw[fill] (0,1) circle(.01);
			\draw[fill] (-1,0) circle(.01);
			\draw[fill] (2,0) circle(.01);
			\draw[fill] (1,0) circle(.01);
			\draw[fill] (3,0) circle(.01);
			\draw[fill] (2.5,0) circle(.01);
			\draw[fill] (2,1) circle(.01);
			\draw[fill] (1,1) circle(.01);
			\draw[fill] (1/2,1/2) circle(.01);
			\draw[fill] (0,1/2) circle(.01);
			\draw[fill] (1/2,0) circle(.01);
			\node (a) at (0,1) [ left] {$1$};
			\node (b) at (1,1) [above left] {$u$};
			\node (c) at (2,1) [above left] {$v$};
			\node (d) at (-1,0) [below ] {$-1$};
			\node (e) at (0,0) [below right] {$0$};
			\node (f) at  (1,0) [below]{$1$};
			\node (g) at  (2.5,0) [below]{$2.5$};
			\node (h) at  (2,0) [below]{$2$};
			\node (i) at  (3,0) [below]{$3$};
			\node (j) at  (0,1/3) [below left]{$1/3$};
			\node (k) at  (0,1/2) [left]{$1/2$};
			\node (l) at  (1/2,0) [below]{$0.5$};
        \node (x) at (4,0) [ below] {$x$};
        \node (y) at (0,1.2) [left] {$y$};
		\end{tikzpicture} 
	\end{minipage}
	\caption{ The graph of two  triangular fuzzy numbers $u=(0,1,3),v=(-1,2,2.5)$.  }\label{fuzzy number u,v}
\end{figure}
\begin{table}[!htbp]
	\centering
	\caption{Fuzzy Order \(P(u, v)\) of Triangular Fuzzy Numbers Under Different T-norms.}
	\label{tab:fuzzy_order_tnorm}
	\begin{tabular}{lcc}
		\toprule
		\multicolumn{1}{c}{T-norm Type} & \multicolumn{1}{c}{Residual Implication \(x \to y\)} & \multicolumn{1}{c}{Fuzzy Order \(P(u, v)\)} \\
		\midrule
		Gödel & 
		\(\begin{cases} 
			1, & x \leq y; \\ 
			y, & x > y .
		\end{cases}\) & 
			\( 1/3 \rightarrow 0\)=0. \\
		\midrule
		Product & 
		\(\begin{cases} 
			1, & x \leq y; \\ 
			\displaystyle y/x, & x > y. 
		\end{cases}\) (\( \frac{0}{0}=1 \)) & 
		\( 1/3 \rightarrow 0\)=0. \\
		\midrule
		Łukasiewicz & 
		\(\min\{1, 1 - x + y\}\) & 
		\(\displaystyle 1/3 \rightarrow 0=2/3 \).\\
		\midrule
		Nilpotent Minimum & 
		\(\begin{cases} 
			1, & x \leq y; \\ 
			\max\{1 - x, y\}, & x > y. 
		\end{cases}\) & 
		\(\lim_{x\nearrow 0.5}  (x+1)/3 \rightarrow x  =1/2.\) \\
		\bottomrule
	\end{tabular}
\end{table}
\end{exmp}

\section{Completeness of the Fuzzy Order}
With the fuzzy order \(P\) defined, its order-theoretic completeness properties are investigated. Specifically, we investigate the conical completeness and conical cocompleteness of its uniformly bounded subspace \(\mathbb{E}^1_{[-M,M]}\), as well as the completeness properties of \(\mathbb{E}^1\). These results are crucial for revealing the structural complexity of the space. 
\begin{prop}\label{upper-semicontinuous except for countable point eq}
Let \(g\in\mathcal{G}\), and let \(\widetilde{g}(x)=\inf_{x'<x}g(x')\) be its upper-semicontinuous modification, so that \(\widetilde{g}\in\mathcal{\widetilde{G}}\). If $\&$ is a continuous t-norm, then for any \( g' \in \mathcal{\widetilde{G}} \), 
\[
\inf_{x \in [-M,M]} (g(x) \rightarrow g'(x)) = \inf_{x \in [-M,M]} (\widetilde{g}(x) \rightarrow g'(x)).
\]
A symmetric statement holds for \( f\in \mathcal{F}, f'\in \mathcal{\widetilde{F}}\).
\end{prop}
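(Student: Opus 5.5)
We need $\inf_x (g(x)\to g'(x)) = \inf_x (\widetilde g(x)\to g'(x))$, where $g$ is non-increasing, $\widetilde g(x)=\inf_{x'<x} g(x')$ is its left-limit (upper-semicontinuous) modification, and $g'\in\widetilde{\mathcal G}$ is non-increasing and upper-semicontinuous, hence left-continuous.

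\emph{Easy inequality.} Since $g$ is non-increasing, $\widetilde g(x)=\inf_{x'<x}g(x')\ge g(x)$ for every $x$. Because $\to$ is antitone in its first argument, $\widetilde g(x)\to g'(x)\le g(x)\to g'(x)$ pointwise. Hence the right-hand infimum is at most the left-hand one.

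\emph{Reverse inequality.} Put $L=\inf_{x\in[-M,M]}(g(x)\to g'(x))$. It suffices to show $\widetilde g(x_0)\to g'(x_0)\ge L$ for each fixed $x_0\in[-M,M]$.

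\begin{itemize}
\item If $\widetilde g(x_0)=g(x_0)$, this is immediate.
\item Otherwise $x_0>-M$, because $g=1$ on $(-\infty,-M]$ forces $\widetilde g(-M)=1=g(-M)$.
\end{itemize}

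In the second case, take $x_n\nearrow x_0$ with $x_n\in[-M,M]$. Then $g(x_n)\to\widetilde g(x_0)$, since $g$ is non-increasing and $\widetilde g(x_0)$ is its left limit. Also $g'(x_n)\to g'(x_0)$ by left-continuity of $g'$. By definition of $L$ we have $g(x_n)\to g'(x_n)\ge L$, equivalently $g(x_n)\,\&\,L\le g'(x_n)$. Since $\&$ is continuous, letting $n\to\infty$ gives $\widetilde g(x_0)\,\&\,L\le g'(x_0)$, i.e.\ $L\le \widetilde g(x_0)\to g'(x_0)$. Continuity of $\&$ is used exactly here: $g(x_n)$ decreases to its limit, i.e.\ approaches from above, so left-continuity alone would not justify passing to the limit. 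This is the main step to handle carefully. The first argument of $\to$ moving downward is precisely where residua of merely left-continuous t-norms can jump.

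\emph{The $\mathcal F$ case.} For $f\in\mathcal F$ and $f'\in\widetilde{\mathcal F}$, the argument is symmetric. Here $\widetilde f(x)=\inf_{x'>x}f(x')$ is the right-limit modification, and $f'$ is right-continuous. One approximates from the right with $x_n\searrow x_0$, using that $x_0<M$ whenever $\widetilde f(x_0)\ne f(x_0)$ (since $f=1$ on $[M,\infty)$). The same continuity argument then applies.
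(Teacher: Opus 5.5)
Your proof is correct and follows essentially the same route as the paper. The easy inequality comes from $\widetilde g\ge g$ and antitonicity of $\to$ in its first argument; the reverse inequality comes from the adjunction $g(y)\mathbin{\&}L\le g'(y)$, passed to the left limit using continuity of $\&$ and $g'(x)=\inf_{y<x}g'(y)$. The only cosmetic difference is that you take the limit along a sequence $x_n\nearrow x_0$, whereas the paper writes $\bigl(\inf_{y<x}g(y)\bigr)\mathbin{\&}t=\inf_{y<x}\bigl(g(y)\mathbin{\&}t\bigr)$ directly.
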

  \begin{proof}
  $\inf_{x \in [-M,M]} (g(x) \rightarrow g'(x))=g(-M) \rightarrow g'(-M) \bigwedge \inf_{x\in (-M,M]} (g(x) \rightarrow g'(x))$. Since $g(-M)=\widetilde{g}(-M)$, it suffices to prove that
   \[
 \inf_{x \in (-M,M]} (g(x) \rightarrow g'(x)) = \inf_{x \in (-M,M]} (\widetilde{g}(x) \rightarrow g'(x)).
  \]
 Since $\widetilde{g}(x)\ge g(x)$,  $ \widetilde{g}(x)\rightarrow g'(x)\le g(x)\rightarrow g'(x)$ for every $x\in (-M,M]$. Then, \[
  \inf_{x \in (-M,M]} (\widetilde {g}(x) \rightarrow g'(x))\le  \inf_{x \in (-M,M]} (g(x) \rightarrow g'(x)).
 \]
 For the reverse inequality, let $ \inf_{x \in (-M,M]} (g(x) \rightarrow g'(x))=t$. By the adjunction property, this is equivalent to \( g(x) \& t \leq g'(x) \) for all \( x \in(-M,M]\). Since \( g '\) is upper-semicontinuous, \( g'(x) = \inf_{y < x} g'(y) \). Using the continuity of \( \& \) and definition of \( \widetilde{g}\),
 \[
 \widetilde{g}(x) \& t = \left( \inf_{y < x} g(y) \right) \& t = \inf_{y < x} (g(y) \& t) \leq \inf_{y < x} g'(y)=g'(x).
 \]
 Applying adjunction again yields \( t \leq \widetilde{g}(x)\rightarrow g'(x) \) for all \( x \in (-M,M]\), so \( t \leq \inf_{x\in(-M,M]} \widetilde{g}(x)\rightarrow g'(x) \). A similar argument shows that a symmetric statement holds for \( f \in \mathcal{F}, f' \in \mathcal{\widetilde{F}} \). 
 \end{proof}

\begin{prop}\label{conically cocomplete and conically complete in E}
		Let $\&$ be a  continuous  t-norm. $(\mathbb{E}^1_{[-M,M]},P)$ is  conically cocomplete and  conically complete.
\end{prop}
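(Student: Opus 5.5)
The plan is to reduce the colimit condition to an explicit pointwise formula and then build the colimit from the left and right parts using Proposition~\ref{upper-semicontinuous except for countable point eq}. The conical completeness claim then follows by the dual argument.

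\textbf{Step 1 (reduce the colimit condition).} Let $\phi=\sup_{a\in A}P(-,a)$ be a conical weight with $A\subseteq\mathbb{E}^1_{[-M,M]}$. By property (6) of the residuum and the Yoneda lemma, for every $w$,
\[
\mathcal{P}X(\phi,P(-,w))=\inf_{y}\inf_{a\in A}\bigl(P(y,a)\rightarrow P(y,w)\bigr)=\inf_{a\in A}P(a,w).
\]
So it suffices to find $s\in\mathbb{E}^1_{[-M,M]}$ with $P(s,w)=\inf_{a\in A}P(a,w)$ for all $w$.

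If $A=\emptyset$, take $s=\chi_{\{-M\}}$. Then $s^\ell=1$ on $[-M,\infty)$ and $s^r=0$ on $(-M,\infty)$, so $P(s,w)=1$ for every $w$ supported in $[-M,M]$.

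\textbf{Step 2 (construct the candidate).} For $A\neq\emptyset$, set $s^\ell=\inf_{a\in A}a^\ell$. Put $g=\sup_{a\in A}a^r\in\mathcal{G}$ and let $s^r=\widetilde g$ be its upper-semicontinuous modification. This matches the shape of Theorem~\ref{continuous point of super and infimum}. Three properties need checking.
\begin{itemize}
\item $s^\ell$ is non-decreasing and upper-semicontinuous, since it is an infimum of such functions. It also satisfies condition (1) of $\mathcal F$, so $s^\ell\in\widetilde{\mathcal F}$.
\item $s^r\in\widetilde{\mathcal G}$, by the construction of the modification.
\item $s^\ell\vee s^r=1$. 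If $s^\ell(x)<1$, then $a^\ell(x)<1$ for some $a$, hence $a^r(x)=1$. Therefore $g(x)=1$ and $\widetilde g(x)\ge g(x)=1$.
\end{itemize}
By Proposition~\ref{fuzzy number u(x) re}, $(s^\ell,s^r)$ then determines a unique $s\in\mathbb{E}^1_{[-M,M]}$.

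\textbf{Step 3 (compute $P(s,w)$).} Outside $[-M,M]$ all implications in the definition of $P$ equal $1$, so the infimum may be restricted to $[-M,M]$. For the left part, property (7) gives
\[
w^\ell\rightarrow\inf_a a^\ell=\inf_a\bigl(w^\ell\rightarrow a^\ell\bigr).
\]
For the right part, Proposition~\ref{upper-semicontinuous except for countable point eq}, applied with $g'=w^r\in\widetilde{\mathcal G}$, lets us replace $\widetilde g$ by $g$ inside the infimum over $[-M,M]$. This is the one place where continuity of $\&$ is used. Then property (6) gives
\[
\Bigl(\sup_a a^r\Bigr)\rightarrow w^r=\inf_a\bigl(a^r\rightarrow w^r\bigr).
\]
The outer $\inf_x$ and the inner $\inf_a$ can be exchanged; the right-part infimum $\inf_x\inf_a(a^r\rightarrow w^r)$ is a single quantity, so combining it with the left part by $\wedge$ commutes with $\inf_a$. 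This yields $P(s,w)=\inf_a P(a,w)$, so $s=\operatorname{colim}\phi$.

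\textbf{Dual statement.} For a conical coweight $\psi=\sup_{b\in B}P(b,-)$, the analogous reduction shows the limit $m$ must satisfy $P(w,m)=\inf_b P(w,b)$. Take $m^r=\inf_b b^r$, and take $m^\ell$ to be the upper-semicontinuous (right-continuous) modification of $\sup_b b^\ell$. For $B=\emptyset$, take $m=\chi_{\{M\}}$. The verification is the same, using the symmetric version of Proposition~\ref{upper-semicontinuous except for countable point eq} for $\mathcal F$.

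\textbf{Main obstacle.} The delicate point is the modification step. The raw supremum $\sup_a a^r$ need not be upper-semicontinuous, so it does not give an element of $\mathbb{E}^1_{[-M,M]}$. Replacing it by $\widetilde g$ must not change the implication values, and this is exactly where Proposition~\ref{upper-semicontinuous except for countable point eq}, and hence continuity of $\&$, is needed. I would handle it by applying that proposition directly, once the infimum has been restricted to $[-M,M]$.
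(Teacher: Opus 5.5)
Your proposal is correct and follows the paper's proof. It uses the same candidate $s=(\inf_a a^\ell,\widetilde{\sup_a a^r})$, the same Yoneda reduction of the colimit condition to $P(s,w)=\inf_a P(a,w)$, and the same appeal to Proposition~\ref{upper-semicontinuous except for countable point eq} to absorb the upper-semicontinuous modification, with the dual construction for limits. Your version is slightly more careful than the paper's: you treat the empty weight separately, where the pointwise formula does not give an element of $\mathbb{E}^1_{[-M,M]}$ and the bottom element $\chi_{\{-M\}}$ is needed, and you check explicitly that $s\in\mathbb{E}^1_{[-M,M]}$, which the paper only asserts.
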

\begin{proof}
	 Let \(\phi \) be a  conical weight  with \(A=\{u_i\}_{i\in D}\subseteq \mathbb{E}^1_{[-M,M]}\). Its colimit is \[ s=(\inf_{i\in D} u_i^\ell(x), \widetilde{\sup_{i\in D} u_i^r}(x) ), \text{ \(\widetilde{\sup_{i\in D} u_i^r}\) denotes its upper-semicontinuous modification}.\]
	 Clearly, \(s\) is a fuzzy number in \(\mathbb{E}^1_{[-M,M]}\).
	 We show that \(s\) is a colimit of  conical weight \(\phi\).
	 For any \(w\in\mathbb{E}^1_{[-M,M]}\),  
	 \begin{align*}
	 	\inf_{e\in \mathbb{E}^1_{[-M,M]}} \Bigl(\phi (e)\rightarrow P(e,w) \Bigl) =&\inf_{e\in \mathbb{E}^1_{[-M,M]}}  \Bigl(\sup_{i\in D} P(e,u_i) \rightarrow P(e,w)\Bigl) \\
	 	=& \inf_{e\in \mathbb{E}^1_{[-M,M]}} \inf_{i\in D} (P(e,u_i) \rightarrow P(e,w) )\\
	 	=& \inf_{i\in D} P(u_i,w) \\
	 	= &\inf_{i\in D} \inf_{x\in [-M,M]}( w^\ell(x)\rightarrow u_i^\ell(x))\wedge (u_i^r(x)\rightarrow w^r(x))  \\
	 	=& \inf_{x\in [-M,M]}\Bigl( ( w^\ell(x)\rightarrow  \inf_{i\in D}u^\ell_i(x) )\wedge (\sup_{i\in D}u_i^r(x)\rightarrow w^r(x) ) \Bigl)\\
	 	=&P(s,w). \tag{ By Proposition \ref{upper-semicontinuous except for countable point eq}}
	 \end{align*}
 Hence, \( \mathbb{E}^1_{[-M,M]}\) is conically cocomplete. Let \(\psi \) be a conical coweight with \(B=\{u_i\}_{i\in D}\subseteq \mathbb{E}^1_{[-M,M]} \).
 Similarly, we  can prove that \( \mathbb{E}^1_{[-M,M]}\) is conically complete. 
 The limit of conical coweight of \(\psi=\sup_{i}P(u_i,-)\) is \[ m=(\widetilde{\sup_{i\in D} u_i^\ell}(x), \inf_{i\in D} u_i^r(x) ).\] 
\end{proof}
\begin{thm}\label{Yoneda limit of P}
Let $\&$ be a  continuous t-norm. Then $(\mathbb{E}^1_{[-M,M]},P)$ is  Yoneda complete.    
\end{thm}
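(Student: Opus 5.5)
Given a forward Cauchy net $\{u_i\}_{i\in D}$ in $\mathbb{E}^1_{[-M,M]}$, I will write down an explicit candidate $b$ for its Yoneda limit, in the spirit of the colimit in Proposition~\ref{conically cocomplete and conically complete in E}. Put
\[
f(x)=\sup_{i\in D}\inf_{j\ge i}u_j^\ell(x),\qquad g(x)=\inf_{i\in D}\sup_{j\ge i}u_j^r(x),
\]
that is, the $\liminf$ of the left parts and the $\limsup$ of the right parts. Let $b=(\widetilde f,\widetilde g)$, where $\widetilde f(x)=\inf_{x'>x}f(x')$ and $\widetilde g$ are the upper‑semicontinuous modifications. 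First I check that $b\in\mathbb{E}^1_{[-M,M]}$. Monotonicity and the boundary values on $[-M,M]$ are inherited pointwise. For condition (3) of Proposition~\ref{fuzzy number u(x) re}, fix $x$. If $\widetilde f(x)<1$, then the left parts eventually stay below some level $<1$ at points $x'>x$, so $u_j^\ell(x')<1$ for a cofinal set of $j$. Convexity then gives $u_j^r=1$ on $(-\infty,x']$ for those $j$, and hence $g(x)=1$ and $\widetilde g(x)=1$.

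The required identity is
\[
P(b,w)=\sup_{i}\inf_{j\ge i}P(u_j,w)\qquad\text{for all }w.
\]

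For the inequality "$\ge$", fix $i$ and let $t=\inf_{j\ge i}P(u_j,w)$. For all $j\ge i$ and all $x$ we have $w^\ell(x)\& t\le u_j^\ell(x)$, so $w^\ell(x)\& t\le f(x)$. Continuity of $\&$ and right continuity of $w^\ell$ pass this to $\widetilde f$. Similarly $u_j^r(x)\& t\le w^r(x)$ for $j\ge i$ gives $\big(\sup_{j\ge k}u_j^r(x)\big)\& t\le w^r(x)$ for each $k\ge i$. Taking the infimum over $k$, using continuity of $\&$ and directedness, gives $g(x)\& t\le w^r(x)$. Proposition~\ref{upper-semicontinuous except for countable point eq} then replaces $g$ by $\widetilde g$. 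Hence $P(b,w)\ge t$, and this direction does not use the Cauchy property at all.

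For the inequality "$\le$", the forward Cauchy condition is essential. Given $\varepsilon$, choose $i_0$ such that $P(u_j,u_k)\ge 1-\varepsilon$ for all $k\ge j\ge i_0$. The goal is
\[
P(u_j,w)\ \ge\ \theta_\varepsilon\,\&\,P(b,w)
\]
for all sufficiently large $j$, where $\theta_\varepsilon\to1$; by transitivity this will follow from $P(u_j,b)\ge\theta_\varepsilon$ for all large $j$. To get that, fix $j\ge i_0$. For all $k\ge j$, the Cauchy bound gives $u_k^\ell(x)\&(1-\varepsilon)\le u_j^\ell(x)$, so $f(x)\&(1-\varepsilon)\le u_j^\ell(x)$. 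It also gives $u_j^r(x)\&(1-\varepsilon)\le u_k^r(x)$ for all $k\ge j$. The latter only controls $\inf_k u_k^r$, whereas I need control of $g$ as a $\limsup$. So I will instead take the infimum over $k\ge m$ for each $m\ge j$ and then pass to the limit. The inequality for the $\ell$‑part is handled the same way, with $\widetilde f$ obtained by right limits and continuity of $\&$. This yields $P(u_j,b)\ge 1-\varepsilon$ and hence $P(u_j,w)\ge (1-\varepsilon)\&P(b,w)$. Letting $\varepsilon\to0$ and using continuity of $\&$ finishes the argument.

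The main obstacle is this $\le$ direction. The difficulty is matching the $\liminf$/$\limsup$ choices so that the Cauchy estimates go the right way for both the left and right parts simultaneously. If the estimate for the right part does not go through as planned, I would redefine $b$ by symmetric choices (for example $\liminf$ for the right part as well) and re-check the $\ge$ direction. Separately, I need to confirm that the upper‑semicontinuous modifications do not alter the infima, which follows from Proposition~\ref{upper-semicontinuous except for countable point eq} and Theorem~\ref{continuous and except for countable point}.
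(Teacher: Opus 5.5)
Your proof is correct, but it takes a different route from the paper's. The paper never builds the limit by hand. By conical completeness (Proposition~\ref{conically cocomplete and conically complete in E}) it forms the tail meets $m_i=\bigwedge_{j\ge i}u_j$, which satisfy $P(-,m_i)=\inf_{j\ge i}P(-,u_j)$. The forward Cauchy ideal thus becomes the conical weight $\sup_i P(-,m_i)$. Its colimit $\bigvee_i m_i$ exists by conical cocompleteness, and Lemma 9.11 of \cite{Zhang2024IntroductoryNO} identifies this colimit with the Yoneda limit.

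That argument is very short and purely structural. It shows that conical completeness plus conical cocompleteness is what makes the space Yoneda complete, and the Cauchy hypothesis enters only through the cited lemma. You instead verify the defining identity $P(b,w)=\sup_i\inf_{j\ge i}P(u_j,w)$ directly. The ``$\ge$'' direction follows by residuation and does not need the Cauchy condition. The ``$\le$'' direction follows by showing $P(u_j,b)\ge 1-\varepsilon$ eventually and then using transitivity. Your version is self-contained: it uses neither the colimit/Yoneda-limit lemma nor the formulas of Theorem~\ref{continuous point of super and infimum}. It also exhibits explicitly that the net converges to $b$ in the sense $P(u_j,b)\to 1$.

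Two remarks.

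First, the step you flag as the main obstacle is not one. For the right part you need a lower bound on $\widetilde g$, and
$u_j^r(x)\mathbin{\&}(1-\varepsilon)\le\inf_{k\ge j}u_k^r(x)\le g(x)\le\widetilde g(x)$
is exactly that bound. No fallback redefinition of $b$ is needed.

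Second, the choice of $\liminf$ versus $\limsup$ is immaterial. The same Cauchy estimates give
$\bigl(\limsup_j u_j^\ell(x)\bigr)\mathbin{\&}(1-\varepsilon)\le\liminf_j u_j^\ell(x)$,
and likewise for the right parts. So both branches converge pointwise, and your $b$ coincides with the paper's limit, namely $u^\ell=\inf_i\widetilde{\sup_{j\ge i}u_j^\ell}$ and $u^r=\widetilde{\sup_i\inf_{j\ge i}u_j^r}$.

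A minor point: in the ``$\ge$'' direction, passing from $f$ to $\widetilde f$ needs no continuity argument, since $\widetilde f\ge f$.
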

\begin{proof}
Let $\phi=\sup_{i\in D}\inf_{j\ge i}P(-,u_j)$ be a forward Cauchy ideal, where
$\{u_j\}_{j\in D}$ is a forward Cauchy net. By Proposition~\ref{conically cocomplete and conically complete in E}, for each
$i\in D$ the meet $m_i:=\inf_{j\ge i}u_j$ exists and satisfies
\[
P(-,m_i)=\inf_{j\ge i}P(-,u_j).
\]
Hence
\[
\phi=\sup_{i\in D}P(-,m_i),
\]
which is a conical weight. Since the family $\{m_i\}_{i\in D}$ is directed (as $i$
grows, $\{j:j\ge i\}$ shrinks, so $m_i$ increases), its join $u:=\sup_{i\in D}m_i$
exists, and by 
Proposition~\ref{conically cocomplete and conically complete in E},
\[
\operatorname{colim}\phi=\sup_{i\in D}m_i=:u.
\]
Finally, by Theorem~\ref{continuous point of super and infimum},
\[
m_i^\ell=\widetilde{\sup_{j\ge i}u_j^\ell},\qquad m_i^r=\inf_{j\ge i}u_j^r,
\]
so
\[
u^\ell=\inf_{i\in D}m_i^\ell=\inf_{i\in D}\widetilde{\sup_{j\ge i}u_j^\ell},
\qquad
u^r=\widetilde{\sup_{i\in D}m_i^r}=\widetilde{\sup_{i\in D}\inf_{j\ge i}u_j^r}.
\]
Thus $\operatorname{colim}\phi=u$ exists with the displayed branches, and
$(\mathbb{E}^1_{[-M,M]},P)$ is Yoneda complete.
\end{proof}

\begin{cor}\label{complete lattice}
	\((\mathbb{E}^1_{[-M,M]})_0 \) is a complete lattice.
\end{cor}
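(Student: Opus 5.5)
The plan is to read off the complete lattice structure of the underlying order from the conical completeness established in Proposition~\ref{conically cocomplete and conically complete in E}. Recall that \((\mathbb{E}^1_{[-M,M]})_0\) carries the order \(u\sqsubseteq v\iff P(u,v)=1\), which by the Remark coincides with the componentwise order \(v^\ell\le u^\ell\) and \(u^r\le v^r\). This is a partial order because \(P\) is separated. It therefore suffices to show that every subset \(A=\{u_i\}_{i\in D}\) has a join in this order.

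\textbf{Step 1: the colimit is the join.} Take the conical weight \(\phi=\sup_{i\in D}P(-,u_i)\). By Proposition~\ref{conically cocomplete and conically complete in E} it has a colimit \(s\), and the computation in that proof gives
\[
P(s,w)=\inf_{i\in D}P(u_i,w)\quad\text{for every } w\in\mathbb{E}^1_{[-M,M]}.
\]
Taking \(w=s\) yields \(\inf_i P(u_i,s)=1\), so \(u_i\sqsubseteq s\) for all \(i\); thus \(s\) is an upper bound. Now let \(w\) be any upper bound. Then \(\inf_i P(u_i,w)=1\), hence \(P(s,w)=1\), that is, \(s\sqsubseteq w\). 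So \(s=\bigvee A\).

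\textbf{Step 2: the empty set.} The case \(A=\emptyset\) needs a separate check, since the argument above then requires a least element. The weight is then identically \(0\), and its colimit must satisfy \(P(s,w)=1\) for all \(w\). The formula for \(s\) degenerates to \(s^\ell=\inf\emptyset=1\) and \(s^r\) equal to the upper-semicontinuous modification of \(0\). Read literally, this is not a fuzzy number, so I would instead exhibit the least element directly. Take \(s=\chi_{\{-M\}}\). Then \(s^\ell=1\) on \([-M,\infty)\) and \(s^r=0\) on \((-M,\infty)\). Since every \(w\) has support in \([-M,M]\), we get \(w^\ell\le s^\ell\) and \(s^r\le w^r\), so \(s\) is the bottom element. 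Alternatively, one can simply take the join of the whole space, or note that a poset with all nonempty joins and a bottom element is complete.

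\textbf{Step 3: meets.} Meets follow either from the standard fact that all joins imply all meets, or dually from the conical limits \(m\) of the same proposition, with top element \(\chi_{\{M\}}\).

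The only delicate point is the boundary case in Step 2: the conical colimit formula behaves badly for the empty family. I handle it by exhibiting the bottom element explicitly rather than invoking the proposition.
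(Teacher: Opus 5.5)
Your proposal is correct and follows the paper's argument: the colimit of the conical weight \(\sup_{i\in D}P(-,u_i)\), which exists by Proposition~\ref{conically cocomplete and conically complete in E}, is exactly the join of \(\{u_i\}_{i\in D}\) in the underlying order. Your separate treatment of the empty family, where you exhibit the bottom element \(\chi_{\{-M\}}\) explicitly because the proposition's formula does not literally yield a fuzzy number, is a sound extra check that the paper's one-line proof leaves implicit.
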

\begin{proof}
	By  Proposition \ref{conically cocomplete and conically complete in E}, the colimit of conical weight of \(\phi=\sup_{i\in D} P(-,u_i)\) is the supremum of \(u_i\) in  the underlying order. 
\end{proof}
Following example show that \(\mathbb{E}^1_{(-\infty,M]}\) is not Yoneda complete under Łukasiewicz t-norm, where \(\mathbb{E}^1_{(-\infty,M]}\) denotes the set of all fuzzy numbers with supports contained in \((-\infty,M]\).
\begin{exmp}
    \begin{align*}
        u_n=\begin{cases}
            e^x,-n \le x\le 0;\\
            0, others.
        \end{cases}
    \end{align*}
    where \(n\in \mathbb{Z}^+\).
    \(u_n\) is a forward Cauchy net in \(\mathbb{E}^1_{(-\infty,0]}\), but it has no Yoneda limit. Hence, \(\mathbb{E}^1_{(-\infty,M]}\) is not Yoneda complete under Łukasiewicz t-norm.
\end{exmp}

\begin{prop}
Let \(\&\) be a left-continuous t-norm. Then \((\mathbb E^1,P)\) is neither
finitely conically cocomplete, nor finitely conically complete, nor tensored, nor cotensored.
\end{prop}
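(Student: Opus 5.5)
The plan is to exploit degenerate instances of each notion, namely the empty conical weight and tensors or cotensors with $r=0$. These reduce all four claims to the non-existence of a least or a greatest element in the underlying order $(\mathbb E^1)_0$. Nonempty finite joins and meets should not be the obstruction: for finitely many $u_1,\dots,u_n\in\mathbb E^1$ the supports lie in a common bounded interval, so the construction in Proposition \ref{conically cocomplete and conically complete in E} still applies (a finite max of upper-semicontinuous functions is already upper-semicontinuous). The failure must therefore come from unboundedness of $\mathbb E^1$ as a whole.

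\textbf{Step 1 (reduction).} Take $A=\emptyset$, which is finite. The conical weight $\phi=\sup_{a\in\emptyset}P(-,a)$ is the constant $0$. A colimit $s$ would satisfy
\[
P(s,w)=\inf_{e\in\mathbb E^1}\bigl(0\rightarrow P(e,w)\bigr)=1\quad\text{for all } w,
\]
so $s$ would be a least element of $(\mathbb E^1)_0$. Dually, the empty conical coweight is $\psi\equiv 0$, and a limit $m$ would satisfy $P(w,m)=1$ for all $w$, so $m$ would be a greatest element. For tensors, fix any $x\in\mathbb E^1$ and take $r=0$. Then $0\otimes x$ must satisfy $P(0\otimes x,y)=0\rightarrow P(x,y)=1$ for all $y$, so it is again a least element. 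Similarly $0\multimap y$ satisfies $P(x,0\multimap y)=1$ for all $x$, so it is a greatest element. All four claims thus follow once we show there is no least and no greatest element.

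\textbf{Step 2 (no extremal elements).} By the Remark after the definition of $P$, $P(u,v)=1$ holds iff $[u]^-_\alpha\le[v]^-_\alpha$ and $[u]^+_\alpha\le[v]^+_\alpha$ for all $\alpha$. Suppose $s$ were least. Pick a real $a<s_0^-$ and let $v=\chi_{a}$, which lies in $\mathbb E^1$ with $[v]_0=\{a\}$. Then $P(s,v)=1$ would force $s_0^-\le a$, a contradiction. Symmetrically, if $m$ were greatest, pick $b>m_0^+$; then $P(\chi_b,m)=1$ would give $b\le m_0^+$, again a contradiction. Only residuation facts valid for any left-continuous t-norm are used here, namely $0\rightarrow y=1$ and Proposition \ref{properties of residual implication}(1).

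\textbf{Main obstacle.} The only delicate point is convention. One must check that $A=\emptyset$ is admitted as a finite subset in the definition of finite conical (co)weights, and that $r=0$ is allowed for tensors and cotensors. Both hold as the definitions are stated. If one insisted on nonempty $A$, I would instead try a weight built from finitely many elements combined with a value of $r$ in $(0,1)$ forcing unbounded support. That route seems much harder, and the degenerate route suffices for the statement.
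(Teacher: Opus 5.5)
Your proposal is correct and matches the paper's proof: the empty conical weight, the empty conical coweight, the tensor $0\otimes x$ and the cotensor $0\multimap y$ would each have to be a least or greatest element of $(\mathbb E^1)_0$, and no such element exists. Your Step 2, using $\chi_a$ with $a<s^-_0$ and $\chi_b$ with $b>m^+_0$, supplies the short verification of that last fact, which the paper only asserts.
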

\begin{proof}
   The underlying ordered set \((\mathbb E^1)_0\) has neither a least nor a
greatest element. Hence the empty conical weight (resp.\ empty conical
coweight) has no colimit (resp.\ limit), so \((\mathbb E^1,P)\) is neither
finitely conically cocomplete nor finitely conically complete. Moreover, since \(0\to t=1\),
a tensor \(0\otimes x\), if it existed, would be the least element, and a
cotensor \(0\multimap y\), if it existed, would be the greatest element;
neither exists, so \((\mathbb E^1,P)\) is neither tensored nor cotensored.
\end{proof}
\begin{cor}
    Let \(\&\) be a left-continuous t-norm. Then \((\mathbb{E}^1,P) \) is not complete.
\end{cor}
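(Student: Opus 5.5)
The corollary follows from the preceding proposition combined with Theorem~\ref{complete category}, so my plan is a short contrapositive argument. Assume, for contradiction, that \((\mathbb{E}^1,P)\) is complete for some left-continuous t-norm \(\&\). By Theorem~\ref{complete category}, completeness is equivalent to each of the following:
\begin{itemize}
\item condition (4): being cotensored and conically complete;
\item condition (3): being tensored and conically cocomplete;
\item condition (5): being order-complete, tensored and cotensored.
\end{itemize}
In particular, completeness would force \((\mathbb{E}^1,P)\) to be tensored, cotensored and conically complete.

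The preceding proposition states that \((\mathbb{E}^1,P)\) is not tensored and not cotensored. Either fact alone contradicts the consequence of completeness just derived, so no further input is needed. Explicitly, through condition (5) I would note that a complete category must admit the tensor \(0\otimes x\) for any \(x\). Since \(0\to t=1\) for every \(t\), this tensor satisfies \(P(0\otimes x,y)=1\) for all \(y\), so it would be a least element of \((\mathbb{E}^1)_0\). No such element exists, because translating any fuzzy number to the left produces a strictly smaller one in the \(\alpha\)-cut order.

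Alternatively, I could avoid Theorem~\ref{complete category} and argue directly. Every conical coweight is a coweight, so completeness implies conical completeness, and in particular finite conical completeness. The empty conical coweight is the constant function \(0\). A limit \(m\) of it satisfies \(P(x,m)=\inf\emptyset=1\) for all \(x\), so \(m\) would be a greatest element of \((\mathbb{E}^1)_0\), which again does not exist. This contradicts the preceding proposition.

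I do not expect any real obstacle. The only point to check is that the paper's notion of a conical weight allows \(A=\emptyset\), giving \(\phi\equiv 0\). If one prefers not to rely on that convention, the tensor/cotensor route through condition (5) of Theorem~\ref{complete category} avoids the issue entirely, so I would present that route as the main argument.
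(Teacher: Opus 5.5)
Your proposal is correct and matches the paper's proof. The paper combines the preceding proposition (no least or greatest element in $(\mathbb{E}^1)_0$, hence no tensor $0\otimes x$ and no cotensor $0\multimap y$) with Theorem~\ref{complete category}, just as your main route through condition (5) does; your left-translation argument and your empty-conical-coweight alternative are both sound and simply spell out that proposition.
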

\begin{proof}
    By Theorem \ref{complete category}.
\end{proof}
\begin{prop}
Let \(\&\) be a left-continuous t-norm such that \(r\to0>0\) for some
\(r\in(0,1)\) (i.e.\ \(r\) is a zero divisor). Then
\((\mathbb E^1_{[-M,M]},P)\) is neither tensored nor cotensored.
\end{prop}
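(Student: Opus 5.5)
The plan is to argue by contradiction for tensors, then dualize for cotensors. Fix $r\in(0,1)$ with $c:=r\to 0>0$. By left-continuity of $\&$ and Proposition~\ref{properties of residual implication}(5), $r\,\&\,c\le 0$, so $r\,\&\,c=0$. We will test the defining equation $P(r\otimes x,y)=r\to P(x,y)$ with a convenient base point $x$ against a small family of test elements $y$. Each test will impose a concrete constraint on the left and right parts of $t:=r\otimes x$, and we will show these constraints cannot hold simultaneously for an element of $\mathbb{E}^1_{[-M,M]}$.

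\textbf{Step 1 (base point and crisp tests).} Take $x=\chi_M$, the greatest element of $(\mathbb{E}^1_{[-M,M]})_0$. For $-M\le a<M$, the characteristic function $\chi_a$ satisfies $P(\chi_M,\chi_a)=0$, because $1\to 0=0$ on $[a,M)$. Hence we need $P(t,\chi_a)=r\to 0=c$. Unwinding the definition of $P$ with the step functions $\chi_a^\ell,\chi_a^r$ gives
\[
P(t,\chi_a)=t^\ell(a)\wedge \inf_{z>a}\bigl(t^r(z)\to 0\bigr)=c \qquad\text{for all } a\in[-M,M).
\]
By monotonicity of $t^\ell$, this already forces $t^\ell\ge c$ on $[-M,\infty)$ and $t^r(z)\to 0\ge c$ for all $z>-M$. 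Equality in at least one branch must also hold for every such $a$.

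\textbf{Step 2 (fuzzy tests).} For $s\in[0,1]$, let $y_s$ be the fuzzy number with $y_s^\ell=s$ on $[-M,M)$, $y_s^\ell=1$ from $M$ on, and $y_s^r=\chi_M^r$. A direct computation gives $P(\chi_M,y_s)=s\to 0$, so the tensor must satisfy
\[
P(t,y_s)=r\to(s\to 0)=(r\,\&\, s)\to 0 .
\]
Choosing $s=c$ gives $r\,\&\,c=0$, so $P(t,y_c)=1$; crisply, this means $t^\ell\ge c$ on $[-M,M)$ and $t^r\le\chi_M^r$. On the other hand, $P(t,y_s)$ is computed from $\inf_{z<M}\bigl(s\to t^\ell(z)\bigr)$ together with the right-part term. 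Playing $s=c$ against $s$ close to $1$, and against the value $t^\ell(a)$ pinned down in Step~1, should force $t^\ell$ to equal the constant $c$ on $[-M,M)$ while simultaneously requiring $\inf_{z}\bigl(s\to c\bigr)=(r\,\&\,s)\to 0$ for all $s$. This identity fails for some $s$, for instance $s=1$, where it demands $c=r\to 0$ as required, but at intermediate $s$ it demands $s\to c=(r\,\&\,s)\to 0$. Then the right-part constraint from Step~1, namely that the infimum over $z>a$ must attain $c$, should contradict $t^r\le\chi_M^r$ near $-M$.

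\textbf{Step 3 (cotensors).} The argument is symmetric. Swap the roles of the left and right parts, use the least element $\chi_{-M}$ as the base point, and use the duality $P^{\mathrm{op}}$; the same tests then show that $r\multimap\chi_{-M}$ cannot exist.

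The main obstacle is Step~2: choosing test elements $y$ whose values $P(t,y)$ are cleanly computable and whose combined requirements are genuinely incompatible for every left-continuous t-norm with a zero divisor. The identity $s\to c=(r\,\&\,s)\to 0$ may happen to hold for some t-norms, such as Łukasiewicz-type ones. If it does, I would instead exploit the right-part branch from Step~1. The condition $\inf_{z>a}(t^r(z)\to 0)=c$ must hold for all $a$ wherever $t^\ell(a)>c$, whereas $t^r$ is non-increasing and upper-semicontinuous. Combining this with $t^r(z)=0$ for $z>M$ at $a$ close to $M$ should give the contradiction.
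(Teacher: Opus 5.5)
\textbf{There is a genuine gap.} Your base point $\chi_M$, the crisp tests $\chi_a$ and the Step 1 formula $P(t,\chi_a)=t^\ell(a)\wedge\inf_{z>a}(t^r(z)\to 0)=c$ are all correct. However, the proposal never actually reaches a contradiction, and each mechanism you suggest for obtaining one fails.

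\emph{Step 2.} The identity $s\to c=(r\mathbin{\&}s)\to 0$ cannot fail. By residuation, $(r\mathbin{\&}s)\to 0=s\to(r\to 0)=s\to c$ for every left-continuous t-norm. Indeed $P(t,y_s)=s\to\inf_{[-M,M)}t^\ell=s\to t^\ell(-M)$, so the whole family $y_s$ imposes exactly one condition, $t^\ell(-M)=c$. It does not force $t^\ell\equiv c$ on $[-M,M)$.

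\emph{Fallback.} The constraints of Step 1, together with $t^r=0$ beyond $M$, are satisfiable. Take the fuzzy number $t$ with $t(-M)=1$, $t=r$ on $(-M,M]$ and $t=0$ elsewhere. Then $P(t,\chi_a)=1\wedge(r\to 0)=c$ for every $a\in[-M,M)$. Moreover, $t^r\le\chi_M^r$ holds automatically for every element of $\mathbb{E}^1_{[-M,M]}$, so it cannot yield a contradiction either.

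\textbf{The missing idea} is the fuzzy-number condition $t^\ell\vee t^r=1$, which is also what drives the paper's proof.
\begin{itemize}
\item Since $1\to 0=0$, your Step 1 requirement $t^r(z)\to 0\ge c>0$ for all $z>-M$ forces $t^r(z)<1$ there.
\item Hence $t^\ell(z)=1$ for all $z>-M$. This is the opposite of your guess that $t^\ell\equiv c$.
\item Right-continuity of the non-decreasing upper-semicontinuous function $t^\ell$ then gives $t^\ell(-M)=1$.
\end{itemize}
Your own test now finishes the argument. Taking $y_1$ (the crisp interval $[-M,M]$) gives $P(t,y_1)=t^\ell(-M)=1$. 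The tensor identity, however, demands $P(t,y_1)=r\to 0<1$, because $r>0$.

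Equivalently, test with $y=t$. Since $P(\chi_M,t)\le t^\ell(0)\to\chi_M^\ell(0)=1\to 0=0$, you get $1=P(t,t)=r\to P(\chi_M,t)=r\to 0<1$.

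The paper argues the same way with $u=(-M,0,M)$. Testing with $v=w$ gives $w^\ell\le r\to u^\ell<1$ near $-M$, hence $w^r=1$ there, which contradicts the crisp test $\chi_{a_-}$. With this repair, your choice of the top element is a valid and slightly cleaner variant.

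For cotensors, the dual argument with base point $\chi_{-M}$ works. The tests $\chi_a$ with $a\in(-M,M]$ force $k^\ell<1$, hence $k^r=1$, on $(-M,M)$. Then $1=P(k,k)=r\to P(k,\chi_{-M})=r\to 0<1$.
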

\begin{proof}
We use the following elementary consequences of adjunction:
\[
r\to q=1\iff r\le q,\qquad q<r\Longrightarrow r\to q<1,\qquad 1\to0=0.
\]

Let \(u=(-M,0,M)\) be the triangular fuzzy number, so that
\(u^\ell(x)=\frac{x+M}{M}\) for \(-M\le x\le0\) and
\(u^r(x)=\frac{M-x}{M}\) for \(0\le x\le M\).

\emph{Tensor.} Suppose that \(w=r\otimes u\) exists, i.e.\
\(P(w,v)=r\to P(u,v)\) for all \(v\). Taking \(v=u\) gives \(P(w,u)=1\);
taking \(v=w\) gives \(1=P(w,w)=r\to P(u,w)\), whence \(P(u,w)\ge r\).
Therefore
\[
u^\ell\le w^\ell\le r\to u^\ell,\qquad
r\mathbin{\&}u^r\le w^r\le u^r.
\]

Choose \(\varepsilon\in(0,r)\), and set
\(a_-=M(\varepsilon-1)\), \(b_-=M(r-1)\), so that
\(-M<a_-<b_-<0\). Let \(v=\chi_{a_-}\). Since \(u^r(x)=1\) and
\(v^r(x)=0\) for \(x\in(a_-,0)\), we have \(P(u,v)=0\), and the tensor
identity yields \(P(w,v)=r\to0>0\).

On the other hand, for \(x\in(a_-,b_-)\) we have
\(u^\ell(x)=\frac{x+M}{M}<r\), hence
\(w^\ell(x)\le r\to u^\ell(x)<1\). As \(w^\ell(x)\vee w^r(x)=1\), we get
\(w^r(x)=1\). Moreover \(x>a_-\), so \(v^\ell(x)=1\) and \(v^r(x)=0\).
Thus
\[
P(w,v)\le\bigl(v^\ell(x)\to w^\ell(x)\bigr)\wedge\bigl(w^r(x)\to v^r(x)\bigr)
=w^\ell(x)\wedge(1\to0)=0,
\]
contradicting \(P(w,v)=r\to0>0\). Hence \(r\otimes u\) does not exist,
and the category is not tensored.

\emph{Cotensor.} Dually, reflecting via \(x\mapsto -x\) (which fixes the
symmetric fuzzy number \(u\)), \(r\multimap u\) does not exist either. Therefore, \((\mathbb E^1_{[-M,M]},P)\) is neither tensored nor cotensored.
\end{proof}
\begin{cor}
	Let \(\&\) be a left-continuous t-norm such that \(r\to0>0\) for some
\(r\in(0,1)\). Then
\((\mathbb E^1_{[-M,M]},P)\) is not complete.
\end{cor}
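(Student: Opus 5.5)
This is immediate from the preceding proposition together with Theorem~\ref{complete category}. Under the hypothesis that \(r\to 0>0\) for some \(r\in(0,1)\), the preceding proposition shows that \((\mathbb E^1_{[-M,M]},P)\) is not tensored. It also shows that it is not cotensored. Either failure is enough.

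The plan is to argue by contradiction. Suppose \((\mathbb E^1_{[-M,M]},P)\) were complete. By the equivalence (2)\(\Leftrightarrow\)(5) of Theorem~\ref{complete category}, it would then be order-complete, tensored and cotensored. Equivalently, (2)\(\Leftrightarrow\)(3) gives that it would be tensored and conically cocomplete. In particular, every tensor \(r\otimes u\) would exist. The preceding proposition exhibits the triangular fuzzy number \(u=(-M,0,M)\) for which \(r\otimes u\) does not exist, which is a contradiction.

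The only point to check is that Theorem~\ref{complete category} is stated for arbitrary \([0,1]\)-enriched categories over the given t-norm. It therefore applies to \((\mathbb E^1_{[-M,M]},P)\), which we already know is a separated \([0,1]\)-enriched category. No continuity beyond left-continuity of \(\&\) is needed, since the preceding proposition was proved under exactly that assumption. I expect no real obstacle: all the substantive work, namely constructing the witness \(v=\chi_{a_-}\) that forces \(P(w,v)=0\) while the tensor identity demands \(P(w,v)=r\to0>0\), was carried out in the preceding proposition.
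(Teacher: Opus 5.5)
Your proposal is correct and matches the paper's proof, which consists solely of citing Theorem~\ref{complete category}: completeness would force the category to be tensored and cotensored, and the preceding proposition rules this out under the hypothesis \(r\to0>0\). You also check that Theorem~\ref{complete category} needs only a left-continuous t-norm; the paper leaves this point implicit, and it is worth making explicit.
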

\begin{proof}
    By Theorem \ref{complete category}.
\end{proof}
The non-existence of tensors and cotensors proved above relies on the
zero-divisor condition \(r\to0>0\). It is natural to ask whether this
hypothesis can be removed. The following example shows that it cannot: for
the Gödel t-norm, which has no zero divisors, both the tensor and the
cotensor exist.
\begin{exmp}[Tensors and cotensors for the Gödel t-norm]\label{ex:Godel-tensor}
Let \(\&=\wedge\) be the Gödel t-norm.
Fix \(r\in(0,1)\) and \(u\in\mathbb E^1_{[-M,M]}\), and write
\([u]_r=[u^-_r,u^+_r]\).

\emph{Tensor.} Define \(w=r\otimes u\) by its branches
\[
w^\ell=r\to u^\ell,\qquad
w^r(x)=\begin{cases}1,&x\le u^-_r,\\r\wedge u^r(x),&x>u^-_r.\end{cases}
\]
\emph{Cotensor.} Define \(c=r\multimap u\) by its branches
\[
c^\ell(x)=\begin{cases}r\wedge u^\ell(x),&x<u^+_r,\\1,&x\ge u^+_r,\end{cases}
\qquad
c^r=r\to u^r.
\]
 By proposition~\ref{fuzzy number u(x) re}, we verify that \(w\) and \(c\) are fuzzy numbers.

\emph{Verify identity.} Since \(w^\ell=r\to u^\ell\) and
\(w^r\ge r\wedge u^r\), by residuation we have
\(v^\ell\to w^\ell=r\to(v^\ell\to u^\ell)\) and
\(w^r\to v^r\le(r\wedge u^r)\to v^r=r\to(u^r\to v^r)\), whence
\[
P(w,v)\le(r\to A)\wedge(r\to B)=r\to P(u,v),
\]
where \(A=\inf_x(v^\ell\to u^\ell)\), \(B=\inf_x(u^r\to v^r)\). For the reverse inequality, let \(s=P(u,v)=A\wedge B\).

\emph{Case 1: \(r\le s\).} Then \(r\to s=1\), so it suffices to prove
\(P(w,v)=1\). Since \(A\ge s\ge r\), we have \(v^\ell(x)\to u^\ell(x)\ge r\)
for every \(x\), whence
\(v^\ell(x)\to w^\ell(x)=r\to\bigl(v^\ell(x)\to u^\ell(x)\bigr)=1\), that is,
\(v^\ell(x)\le w^\ell(x)\). For the right branch, if \(x>u^-_r\), then
\(u^r(x)\to v^r(x)\ge B\ge r\), so
\(w^r(x)\to v^r(x)=r\to\bigl(u^r(x)\to v^r(x)\bigr)=1\), that is,
\(w^r(x)\le v^r(x)\). If \(x\le u^-_r\), then \(w^r(x)=1\), and we claim that
\(v^r(x)=1\). To see this, note first that \(v_1^-\ge u^-_r\): for \(y<u^-_r\)
one has \(u^\ell(y)<r\); if \(y>v_1^-\), then \(v^\ell(y)=1\), which would give
\(v^\ell(y)\to u^\ell(y)=u^\ell(y)<r\), contradicting
\(v^\ell(y)\to u^\ell(y)\ge r\). Thus \(v_1^-\ge u^-_r\), and consequently
\(v^r(x)=1\) for every \(x\le u^-_r\). Hence \(P(w,v)=1=r\to s\).

\emph{Case 2: \(r>s\).} Then \(r\to s=s\). For the Gödel implication, if
\(t\ge s\), then \(r\to t\ge s\). Hence, for the left branch,
\(v^\ell(x)\to w^\ell(x)=r\to\bigl(v^\ell(x)\to u^\ell(x)\bigr)\ge s\), because
\(v^\ell(x)\to u^\ell(x)\ge A\ge s\). For the right branch, if \(x>u^-_r\),
then
\(w^r(x)\to v^r(x)=r\to\bigl(u^r(x)\to v^r(x)\bigr)\ge s\), since
\(u^r(x)\to v^r(x)\ge B\ge s\). If \(x\le u^-_r\), then \(w^r(x)=1\), and
because \(u^-_r\le u_1^-\), we have \(u^r(x)=1\), so
\(w^r(x)\to v^r(x)=1\to v^r(x)=v^r(x)=u^r(x)\to v^r(x)\ge B\ge s\).
Therefore \(P(w,v)\ge s=r\to s\).
 The cotensor identity
\(P(v,r\multimap u)=r\to P(v,u)\) is verified dually.
\end{exmp}
\begin{thm}\label{completeness iff Godel}
Let \(\&\) be a continuous t-norm. The following are equivalent:
\begin{enumerate}[label={\rm(\arabic*)}]
    \item \((\mathbb E^1_{[-M,M]},P)\) is complete;
    \item \((\mathbb E^1_{[-M,M]},P)\) is cocomplete;
    \item \((\mathbb E^1_{[-M,M]},P)\) is tensored;
    \item \((\mathbb E^1_{[-M,M]},P)\) is cotensored;
    \item \(\&=\wedge\).
\end{enumerate}
\end{thm}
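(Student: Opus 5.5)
Our plan is to route everything through Theorem~\ref{complete category}, Proposition~\ref{conically cocomplete and conically complete in E}, the zero-divisor proposition and Example~\ref{ex:Godel-tensor}. Since $(\mathbb E^1_{[-M,M]},P)$ is conically cocomplete and conically complete, items (3) and (4) of Theorem~\ref{complete category} reduce to "tensored" and "cotensored" respectively. Hence (1)$\iff$(2)$\iff$(3)$\iff$(4) is immediate: complete $\iff$ cocomplete by Theorem~\ref{complete category}, cocomplete $\iff$ tensored and conically cocomplete $\iff$ tensored, and complete $\iff$ cotensored.

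For (5)$\Rightarrow$(3), Example~\ref{ex:Godel-tensor} exhibits $r\otimes u$ for every $r\in(0,1)$ and $u\in\mathbb E^1_{[-M,M]}$. The remaining values of $r$ are trivial. For $r=1$ we take $1\otimes u=u$. For $r=0$ the tensor must satisfy $P(0\otimes u,v)=1$ for all $v$, so we take the least element of the complete lattice $(\mathbb E^1_{[-M,M]})_0$ (Corollary~\ref{complete lattice}), namely $\chi_{-M}$. Before relying on the example, I would briefly double-check that its $w$ is a fuzzy number via Proposition~\ref{fuzzy number u(x) re}: $w^\ell=r\to u^\ell$ is non-decreasing and upper semicontinuous for the Gödel implication, and $w^\ell\vee w^r=1$ because $w^\ell(x)=1$ whenever $u^\ell(x)\ge r$, i.e. $x\ge u^-_r$. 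I would also check that its support stays in $[-M,M]$. This gives (3), and the cotensor part similarly gives (4).

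For (3)$\Rightarrow$(5), which is the substantive direction, we argue by contraposition: assume $\&$ is continuous but $\&\neq\wedge$. By the Mostert--Shields ordinal-sum representation, some nontrivial Archimedean summand (Łukasiewicz or product type) lives on an interval $[a,b]$. If some summand is of Łukasiewicz type, then $\&$ has zero divisors only when $a=0$, so in general we cannot immediately invoke the earlier proposition requiring $r\to0>0$.

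The main obstacle is therefore the case without zero divisors (e.g. product, or summands not starting at $0$). There we plan to generalize the argument of the zero-divisor proposition, replacing the threshold $0$ by the summand's lower end $a$. Concretely, pick $r\in(a,b)$ and a level $q\in[a,r)$ with $r\to q>q$, which exists because $\&$ is strictly below $\wedge$ inside the summand. Take $u$ triangular and derive from the putative tensor $w=r\otimes u$ the bounds $u^\ell\le w^\ell\le r\to u^\ell$ and $r\& u^r\le w^r\le u^r$. Then choose a test fuzzy number $v$ whose branches take the value $q$ on a suitable interval, e.g. $v^r=q$ on a region where $u^r=1$ but $u^\ell<r$. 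This forces $P(u,v)=q$, so the tensor identity demands $P(w,v)=r\to q>q$. On the other hand, $w^r(x)=1$ there (because $w^\ell<1$), giving $P(w,v)\le 1\to q=q$, a contradiction. If this direct computation becomes messy, the fallback is to observe that in the product case $r\to q=q/r$ and check the contradiction explicitly, then transport it to an arbitrary Archimedean summand by its order isomorphism with product or Łukasiewicz.
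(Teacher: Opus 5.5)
Your proposal is correct and follows the paper's proof. The equivalence of (1)--(4) comes from Theorem~\ref{complete category} plus conical (co)completeness, and (5)$\Rightarrow$(3) from Example~\ref{ex:Godel-tensor}; your handling of $r\in\{0,1\}$ fills in a case the example leaves implicit. For (3)$\Rightarrow$(5) you use the same triangular $u=(-M,0,M)$, the same bounds on $w=r\otimes u$, and a step-type $v$ with $v^r=q$ where $u^r=1$ and $u^\ell<r$, which forces $r\to q\le q$.

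The only difference is how you obtain $q<r$ with $r\to q>q$. The paper runs the argument for every $0<p<r<1$ to get $r\to p=p$, then deduces $r\mathbin{\&}r=r$ directly. Your Mostert--Shields detour is therefore unnecessary: if $\&\neq\wedge$, any non-idempotent $r$ and any $q\in[r\mathbin{\&}r,r)$ with $q>0$ already give $r\to q\ge r>q$.
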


\begin{proof}
By Proposition~\ref{conically cocomplete and conically complete in E} and
Theorem~\ref{complete category}, the first four properties are equivalent,
so it suffices to prove \(\text{tensored}\iff\&=\wedge\).

\emph{($\Leftarrow$).} This is Example~\ref{ex:Godel-tensor}.

\emph{($\Rightarrow$).} Assume tensored. Let \(u=(-M,0,M)\), fix
\(r\in(0,1)\), and put \(w=r\otimes u\), so that \(P(w,v)=r\to P(u,v)\) for
all \(v\). Taking \(v=u\) gives \(P(w,u)=1\); taking \(v=w\) gives
\(1=r\to P(u,w)\), hence \(P(u,w)\ge r\). Therefore
\[
u^\ell\le w^\ell\le r\to u^\ell,\qquad r\mathbin{\&}u^r\le w^r\le u^r.
\]

Take \(p\in(0,r)\), set \(c=M(p-1)\), and let \(u_r^-=M(r-1)\) be the left
endpoint of the \(r\)-cut of \(u\), so \(-M<c<u_r^-<0\). Define
\(v\in\mathbb E^1_{[-M,M]}\) by
\[
v^\ell(x)=\begin{cases}0,&x<c,\\1,&x\ge c,\end{cases}\qquad
v^r(x)=\begin{cases}1,&x\le c,\\p,&c<x\le M,\\0,&x>M.\end{cases}
\]
Thus \(v(c)=1\), \(v=p\) on \((c,M]\), and \(v\) is supported in
\([c,M]\subseteq[-M,M]\).

We claim \(P(u,v)=p\). Indeed, \(\inf_x(v^\ell(x)\to u^\ell(x))=u^\ell(c)=p\),
and \(\inf_x(u^r(x)\to v^r(x))=p\), since \(u^r=1\) on \([c,0]\), \(v^r=p\)
on \((c,M]\), and \(p\le q\to p\) for all \(q\in[0,1]\).

Now take \(x\in(c,u_r^-)\), so \(u^\ell(x)<r\). Then
\(w^\ell(x)\le r\to u^\ell(x)<1\), hence \(w^r(x)=1\), while \(v^r(x)=p\).
Therefore
\[
P(w,v)\le w^r(x)\to v^r(x)=1\to p=p.
\]
On the other hand, \(P(w,v)=r\to P(u,v)=r\to p\), so \(r\to p\le p\); with
\(p\le r\to p\) (from \(r\mathbin{\&}p\le p\)) this gives \(r\to p=p\).

Since \(r\to p=p\), for \(z>p\) we have \(r\mathbin{\&z}>p\) (else
\(z\le r\to p=p\)). Taking \(z=r>p\) gives \(r\mathbin{\&}r>p\); letting
\(p\nearrow r\) yields \(r\mathbin{\&}r=r\), so every element of \([0,1]\) is
idempotent. Finally, if \(a\le b\), then
\(a=a\mathbin{\&}a\le a\mathbin{\&}b\le a\mathbin{\&}1=a\), whence
\(a\mathbin{\&}b=a=\min\{a,b\}\), i.e.\ \(\&=\wedge\).
\end{proof}

\section{Continuity of Fuzzy Order on \((\mathbb{E}^1_{[-M,M]},P)\)}
This section focuses on the continuity of the fuzzy order $P$ on the space of uniformly bounded-support fuzzy numbers $\mathbb{E}^1_{[-M,M]}$. Continuity is a property essential for approximation and computation, and it requires a thorough investigation.  Our approach begins by analyzing the continuity of left and right branch function spaces ($\widetilde{\mathcal{F}}$ and $\widetilde{\mathcal{G}}$) under fuzzy order.  We then prove that \((\mathbb{E}^1_{[-M,M]},P)\) is a [0,1]-domain if and only if the continuous t-norm satisfies the (S) condition. We begin  by giving the definition of continuity for a \([0,1]\)-enriched category.
\begin{definition}
	 Suppose \( X \) is a \([0,1]\)-enriched category. If \( \operatorname{colim}: \mathcal{I}X \to X \) has a left adjoint, then we say that \( X \) is continuous. In other words, \( X \) is continuous if there exists a string of adjunctions
	\[
	 \twoheaddownarrow \dashv \operatorname{colim} \dashv \mathrm{y}: X \to \mathcal{I}X.
	\]
\end{definition}
A Yoneda complete and continuous \([0,1]\)-enriched category is called a \([0,1]\)-enriched domain.
Let \( X \) be a \([0,1]\)-enriched category. The way below distributor of \( X \) refers to the distributor \( \mathfrak{w}: X \to X \) given by
\[
\mathfrak{w}(y, x) = \inf_{\phi \in \mathcal{I}X} \left( X(x, \operatorname{colim} \phi) \to \phi(y) \right).
\]

\begin{lem}[\cite{Zhang2024IntroductoryNO}, Proposition 17.3]\label{way-below}
	 A \([0,1]\)-enriched category \( X \) is continuous if and only if for all \( x \in X \), the weight \( \mathfrak{w}(-, x) \) belongs to \( \mathcal{I}X \) with \( x \) being a colimit. In this case, \( \twoheaddownarrow x = \mathfrak{w}(-, x) \).
\end{lem}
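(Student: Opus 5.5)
The plan is to work directly from the definition of an enriched adjunction. For a map \(d: X\to\mathcal{I}X\), the relation \(d\dashv\operatorname{colim}\) means
\[
\mathcal{I}X(d(x),\phi)=X(x,\operatorname{colim}\phi)\quad\text{for all }x\in X,\ \phi\in\mathcal{I}X .
\]
Here \(\mathcal{I}X(\phi_1,\phi_2)=\operatorname{sub}_X(\phi_1,\phi_2)\), since \(\mathcal{I}X\) is a full subcategory of \(\mathcal{P}X\). Functoriality of \(d\) is automatic once this identity holds. I would first record two facts.

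\begin{itemize}
\item \(\operatorname{colim}:\mathcal{I}X\to X\) is a functor. From \(X(\operatorname{colim}\phi_2,\operatorname{colim}\phi_2)=1\) we get \(\mathcal{P}X(\phi_2,X(-,\operatorname{colim}\phi_2))=1\), that is, \(\phi_2\le X(-,\operatorname{colim}\phi_2)\) pointwise. Hence
\[
X(\operatorname{colim}\phi_1,\operatorname{colim}\phi_2)=\mathcal{P}X(\phi_1,X(-,\operatorname{colim}\phi_2))\ge\mathcal{P}X(\phi_1,\phi_2).
\]
\item For each \(x\), the representable \(\mathrm{y}(x)=X(-,x)\) lies in \(\mathcal{I}X\) via a constant net, and by the Yoneda lemma \(\operatorname{colim}\mathrm{y}(x)\cong x\).
\end{itemize}

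\emph{Necessity.} Suppose \(d\dashv\operatorname{colim}\). For any \(\phi\in\mathcal{I}X\) and \(y\in X\), the adjunction identity gives \(X(x,\operatorname{colim}\phi)\le d(x)(y)\to\phi(y)\). By Proposition \ref{properties of residual implication}(2) this is equivalent to \(d(x)(y)\le X(x,\operatorname{colim}\phi)\to\phi(y)\). Taking the infimum over \(\phi\) yields \(d(x)\le\mathfrak{w}(-,x)\).

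For the reverse inequality, plug \(\phi=d(x)\) into the definition of \(\mathfrak{w}\). Since \(X(x,\operatorname{colim}d(x))=\mathcal{I}X(d(x),d(x))=1\), we get
\[
\mathfrak{w}(y,x)\le 1\to d(x)(y)=d(x)(y).
\]
Thus \(\mathfrak{w}(-,x)=d(x)\in\mathcal{I}X\).

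It remains to show \(x\) is a colimit of this weight. We already have \(X(x,\operatorname{colim}d(x))=1\). In the other direction, \(\mathcal{I}X(d(x),\mathrm{y}(x))=X(x,\operatorname{colim}\mathrm{y}(x))=X(x,x)=1\), so \(d(x)\le\mathrm{y}(x)\). Functoriality of \(\operatorname{colim}\) then gives \(X(\operatorname{colim}d(x),x)=1\). Hence \(x\cong\operatorname{colim}\mathfrak{w}(-,x)\), and \(\twoheaddownarrow x=\mathfrak{w}(-,x)\).

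\emph{Sufficiency.} Assume each \(\mathfrak{w}(-,x)\) lies in \(\mathcal{I}X\) with colimit \(x\), and set \(d(x)=\mathfrak{w}(-,x)\). The inequality \(X(x,\operatorname{colim}\phi)\le\mathcal{I}X(\mathfrak{w}(-,x),\phi)\) follows from the definition of \(\mathfrak{w}\). Indeed, \(\mathfrak{w}(y,x)\,\&\,X(x,\operatorname{colim}\phi)\le\phi(y)\) for every \(y\) by Proposition \ref{properties of residual implication}(5), and then adjunction gives the claim. For the reverse inequality I use the functoriality of \(\operatorname{colim}\) together with \(\operatorname{colim}\mathfrak{w}(-,x)\cong x\):
\[
X(x,\operatorname{colim}\phi)=X(\operatorname{colim}\mathfrak{w}(-,x),\operatorname{colim}\phi)\ge\mathcal{I}X(\mathfrak{w}(-,x),\phi).
\]
This gives the adjunction identity, hence \(d\dashv\operatorname{colim}\). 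Together with \(\operatorname{colim}\dashv\mathrm{y}\), which is the defining property of colimits restricted to \(\mathcal{I}X\), we obtain continuity.

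The main obstacle I expect is the bookkeeping around the functoriality of \(\operatorname{colim}\) and the inequality \(\phi\le X(-,\operatorname{colim}\phi)\). Everything else is formal residuation.
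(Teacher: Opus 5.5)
Your proof is correct. The paper gives no proof of this lemma; it is quoted from Zhang's notes (Proposition 17.3). So there is no in-paper argument to compare with, and what you have written is a self-contained proof by the standard residuation argument.

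\emph{Necessity.} The adjunction identity $\mathcal{I}X(d(x),\phi)=X(x,\operatorname{colim}\phi)$ gives $d(x)\le\mathfrak{w}(-,x)$ by residuation. Testing the infimum that defines $\mathfrak{w}$ at $\phi=d(x)$ gives the reverse inequality.

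\emph{Sufficiency.} The definition of $\mathfrak{w}$ yields $X(x,\operatorname{colim}\phi)\le\mathcal{I}X(\mathfrak{w}(-,x),\phi)$. The opposite inequality follows from $\phi\le X(-,\operatorname{colim}\phi)$ together with $\operatorname{colim}\mathfrak{w}(-,x)\cong x$.

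The supporting facts all check out: functoriality of $\operatorname{colim}$, $\operatorname{colim}\mathrm{y}(x)\cong x$, automatic functoriality of $d$, and the identification $\twoheaddownarrow x=\mathfrak{w}(-,x)$.

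One point is worth stating explicitly. Your proof treats $\operatorname{colim}$ as a total map $\mathcal{I}X\to X$, i.e.\ it tacitly assumes every forward Cauchy ideal has a colimit. The paper's definition of continuity and its formula for $\mathfrak{w}$ make the same assumption, so this is the intended setting and not a gap.
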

 Following  \cite{Lai2019CompletelyDE}, we present some conclusions concerning the continuity of the fuzzy order.
 \begin{prop}[\cite{Lai2019CompletelyDE}]\label{implication operator is continuous at every point}
 	Suppose $\mathbin{\&}$ is a continuous t-norm on $[0,1]$. For each $p \in [0,1]$,
 	let $p^-$ be the greatest idempotent element in $[0,p]$ and let $p^+$ be the least idempotent
 	element in $[p,1]$. Then the following are equivalent:
 	
 	\begin{enumerate}[label={\rm(\arabic*)}]
 		\item For each non-idempotent element $p \in [0,1]$, the restriction of $\mathbin{\&}$ on $[p^-,p^+]$
 		is isomorphic to the product t-norm on \([0,1]\) whenever $p^- > 0$.
 		
 		\item The implication operator \(\to: [0,1]^2 \to [0,1]\) is continuous at every point
 		off the diagonal $\{(x,x) \mid x \in [0,1]\}$.
 		
 		\item For each $p \in (0,1]$, the function $p \to -: [0,1] \to [0,1]$ is continuous on the interval $[0,p)$.
 	\end{enumerate}
 \end{prop}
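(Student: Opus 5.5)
The approach is to use the Mostert--Shields ordinal sum representation of continuous t-norms. Every continuous $\&$ is an ordinal sum of a family of pairwise disjoint open intervals $(a_k,b_k)\subseteq[0,1]$. On each closed interval $[a_k,b_k]$, $\&$ is isomorphic (by the affine rescaling $x\mapsto x'=(x-a_k)/(b_k-a_k)$) either to the Łukasiewicz or to the product t-norm. Outside these intervals $\&=\wedge$, and the idempotents are exactly the points not in any $(a_k,b_k)$. So for non-idempotent $p$, $[p^-,p^+]$ is one of the $[a_k,b_k]$.

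From this representation I will first write down the residuum explicitly:
\begin{itemize}
\item $x\to y=1$ if $x\le y$;
\item if $y<x$ and both lie in one summand $[a,b]$, then $x\to y=a+(b-a)\,I(x',y')$, where $I$ is the Łukasiewicz or Goguen implication;
\item $x\to y=y$ in all remaining cases with $y<x$.
\end{itemize}
At a left endpoint $y=a$ with $a<x\le b$, the value is computed directly from adjunction. For Łukasiewicz it is $a+b-x$, and for product it is $a$.

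\emph{(2)$\Rightarrow$(3).} This is immediate: for $y<p$ the point $(p,y)$ lies off the diagonal, so $p\to-$ is continuous at $y$.

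\emph{(3)$\Rightarrow$(1).} I argue by contraposition. Suppose $p$ is non-idempotent with $p^->0$ and the summand $[a,b]=[p^-,p^+]$ is of Łukasiewicz type. Pick $c\in(a,b)$; note $c\in(0,1]$ and $a\in[0,c)$.
\begin{itemize}
\item For $y<a$, $y$ lies below the summand, so $c\to y=y$, and $\lim_{y\nearrow a}(c\to y)=a$.
\item From adjunction, $c\to a=a+(b-c)>a$, because $c\,\&\,z\le a$ holds exactly when $z'\le 1-c'$.
\end{itemize}
So $c\to-$ is discontinuous at $a\in[0,c)$, contradicting (3). The hypothesis $p^->0$ is exactly what makes the points $y<a$ available.

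\emph{(1)$\Rightarrow$(2).} This is the main work: a case analysis at an off-diagonal point $(x_0,y_0)$.
\begin{itemize}
\item If $x_0<y_0$, then $\to\equiv 1$ on a neighbourhood.
\item Let $x_0>y_0$ with $y_0$ in the interior of a summand $(a,b)$. If $x_0$ is also in $(a,b]$, continuity follows from continuity of the Łukasiewicz implication and of $y'/x'$ away from $x'=0$. If $x_0>b$, then locally $x\to y=y$.
\item If $y_0$ is idempotent but not a left endpoint of a summand, then near $(x_0,y_0)$ every formula returns $y$ or a value tending to $y_0$ (for instance at a right endpoint $b$, the summand formula tends to $b$ as $y\nearrow b$). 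Hence $\to$ is continuous there.
\item The critical case is $y_0=a$, a left endpoint, with $x_0\in(a,b]$. Points with $y<a$ give $x\to y=y\to a$. Points with $y\ge a$ in the summand give $a+(b-a)I(x',y')$, which tends to $a+(b-a)I(x_0',0)$.
  \begin{itemize}
  \item By (1), if $a>0$ the summand is of product type, and $I(x_0',0)=0$, so the two one-sided limits agree with the value $a$.
  \item If $a=0$, there are no points with $y<a$, so only the within-summand formula matters, and it is continuous whether the summand is Łukasiewicz or product.
  \end{itemize}
\end{itemize}

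I expect this left-endpoint analysis to be the main obstacle, together with the bookkeeping needed to show the case list is exhaustive. This includes accumulation points of infinitely many summands, where I would use that the summand lengths must shrink, so the values tend to $y_0$.
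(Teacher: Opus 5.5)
Your proposal is correct. The paper gives no proof of this proposition; it quotes it from \cite{Lai2019CompletelyDE}, so there is no internal argument to compare against. Your Mostert--Shields ordinal-sum argument is a valid self-contained proof, and all three implications hold. In particular, you correctly identify the left endpoint $y_0=p^{-}>0$ of a summand as the only place where the type of the summand matters. It is where $c\to-$ becomes discontinuous in (3)$\Rightarrow$(1), and where the two one-sided limits must agree in (1)$\Rightarrow$(2).

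One claim should be corrected. Mostert--Shields does not give each summand as an \emph{affine} rescaling of the {\L}ukasiewicz or product t-norm. It only gives an isomorphic copy, via some increasing bijection $\varphi_k\colon[a_k,b_k]\to[0,1]$. For example, the Hamacher product $xy/(x+y-xy)$ is a single strict summand on $[0,1]$ that is not equal to the product.

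So on a summand the residuum is $x\to y=\varphi_k^{-1}\bigl(I(\varphi_k(x),\varphi_k(y))\bigr)$. Your explicit values become $c\to a=\varphi_k^{-1}(1-\varphi_k(c))>a$ for a {\L}ukasiewicz-type summand and $c\to a=a$ for a product-type one. Everything you actually use is preserved by such homeomorphisms: continuity of $I$ away from $(0,0)$, and whether $c\to a$ exceeds $a$. So nothing else in the argument changes.

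Two bookkeeping cases deserve a line each. First, if $y_0\in(a,b)$ and $x_0=b$, the neighbourhood contains points with $x>b$, where $x\to y=y$. This agrees with the summand formula along $x=b$ because $I(1,t)=t$. Second, if $y_0=a$ but $x_0>b$, then simply $x\to y=y$ near $(x_0,y_0)$.
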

 A continuous t-norm that satisfies one of the conditions in Proposition  \ref{implication operator is continuous at every point} is said to satisfy the (S) condition.
\begin{thm}[\cite{Lai2019CompletelyDE}, Theorem 6.4]\label{continuous= (S) condition}
	Let $\mathbin{\&}$ be a continuous t-norm. Then the following
	statements are equivalent:
	\begin{enumerate}[label={\rm(\arabic*)}]
	
		\item The $[0,1]$-enriched category $([0,1], \rightarrow)$ is continuous.
		\item A continuous t-norm that satisfies the (S) condition.
	\end{enumerate}
\end{thm}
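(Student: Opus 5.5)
Since the statement is cited from \cite{Lai2019CompletelyDE}, I would reprove it directly from Lemma \ref{way-below}. First note that $V=([0,1],\alpha_L)$ is complete: the colimit of a weight $\phi$ is $\sup_{y}\phi(y)\mathbin{\&}y$. In particular $V$ is Yoneda complete. So continuity of $V$ is equivalent to the following: for every $x$, the way-below weight $\mathfrak{w}(-,x)$ lies in $\mathcal{I}V$ and has colimit $x$.

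\textbf{Step 1: classify forward Cauchy ideals of $V$.} A forward Cauchy net $\{x_i\}$ with $\sup_i\inf_{k\ge j\ge i}(x_j\to x_k)=1$ has $\liminf$ and $\limsup$ that must coincide, say at $a$; otherwise the values $x_j\to x_k$ would stay bounded below $1$. I would then compute $\sup_i\inf_{j\ge i}(y\to x_j)$ for such a net using the right-continuity (upper semicontinuity, Proposition \ref{left-continuous is jointly upper semicontinuous function}) and monotonicity of $y\to -$. The expected conclusion is that every forward Cauchy ideal equals one of:
\begin{itemize}
\item $\mathrm{y}(a)=(-\to a)$, the representable weight;
\item $\phi_a^-(y)=\sup_{b<a}(y\to b)$ with $a>0$, arising from nets approaching $a$ strictly from below.
\end{itemize}
Both have colimit $a$.

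\textbf{Step 2: compute the way-below weight.} From the classification,
\[
\mathfrak{w}(y,x)=\inf_{a}\Bigl(\bigl((x\to a)\to (y\to a)\bigr)\wedge\bigl((x\to a)\to \phi_a^-(y)\bigr)\Bigr).
\]
The first family of terms equals $1$ (use $a=x$ and residuation). The essential term is therefore $\inf_a\bigl((x\to a)\to\phi_a^-(y)\bigr)$. I would show that, under (S), this reduces to $\mathfrak{w}(y,x)=\phi_x^-(y)=\sup_{b<x}(y\to b)$, or to $1$ when $y<x$ lies in a suitable region. The tool is Proposition \ref{implication operator is continuous at every point}(3): for $y>b$ near the diagonal, continuity of $p\to-$ on $[0,p)$ identifies $\sup_{b<a}(y\to b)$ with $y\to a$ whenever $a<y$. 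Given this, $\mathfrak{w}(-,x)=\phi_x^-$ is a forward Cauchy ideal with colimit $x$, and Lemma \ref{way-below} gives continuity.

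\textbf{Step 3: converse.} Suppose (S) fails. By Proposition \ref{implication operator is continuous at every point}(3) there are $p$ and $q<p$ such that $p\to-$ is not left-continuous at $q$, so $\sup_{b<q}(p\to b)<p\to q$. Put $x=q$, and test $\mathfrak{w}(-,q)$ against the ideal $\phi_q^-$, using the two distinct values of $p\to q$ and $\sup_{b<q}(p\to b)$. The aim is to show that the colimit $\sup_y \mathfrak{w}(y,q)\mathbin{\&}y$ is strictly less than $q$, or that $\mathfrak{w}(-,q)$ is not among the ideals from Step 1. Either way Lemma \ref{way-below} fails, so $V$ is not continuous.

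The main obstacle is Step 1: the exact classification of forward Cauchy ideals of $V$ for an arbitrary continuous t-norm. Nets oscillating inside a non-idempotent (Łukasiewicz- or product-like) block might produce ideals other than $\mathrm{y}(a)$ and $\phi_a^-$. I would first handle this via the ordinal-sum decomposition of $\&$, treating each Archimedean summand separately and checking that the Cauchy condition forces eventual monotone convergence.
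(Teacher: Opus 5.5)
The paper only cites this result from Lai--Zhang, so a direct proof via Lemma~\ref{way-below} is a reasonable plan. Your Steps 1--2 can be made to work. Step 3, however, tests the wrong element and as set up cannot produce a contradiction.

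\textbf{Step 3.} By the ordinal-sum structure, a jump $\sup_{b<q}(p\to b)<p\to q$ occurs exactly when $q>0$ is the bottom idempotent of a {\L}ukasiewicz summand $[q,e]$ and $q<p<e$. At $x=q$ itself nothing need go wrong. For example, take $\&=\wedge$ on $[0,\tfrac12]$ with a {\L}ukasiewicz summand on $[\tfrac12,1]$. Then $a\to b=1-a+b$ for $\tfrac12\le b<a$, and $a\to b=b$ for $b<a$, $b<\tfrac12$.
\begin{itemize}
\item For every $p\in(\tfrac12,1)$, $p\to-$ jumps at $q=\tfrac12$.
\item Yet a direct computation gives $\mathfrak{w}(-,\tfrac12)=\phi^-_{1/2}$, which is $1$ on $[0,\tfrac12)$ and $\tfrac12$ on $[\tfrac12,1]$. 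This is a forward Cauchy ideal with colimit $\tfrac12$, so neither of your two alternatives holds at $x=q$.
\end{itemize}
The failure occurs at $x=p$. For $q<y<p$ we have $\phi^-_q(y)=\sup_{b<q}(y\to b)=q$, because $q$ is idempotent and hence $y\to b=b$ for $b<q<y$. Therefore $\mathfrak{w}(y,p)\le(p\to q)\to\phi^-_q(y)=(p\to q)\to q<1$. In the example, $\mathfrak{w}(y,\tfrac34)=\tfrac34$ for all $y\ge\tfrac12$. Its colimit $\sup_y\mathfrak{w}(y,\tfrac34)\mathbin{\&}y$ is still $\tfrac34$, so the ``colimit $<q$'' route is a dead end. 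What fails is membership in $\mathcal{I}V$: by your Step 1, the only forward Cauchy ideals with colimit $p$ are $\mathrm{y}(p)$ and $\phi^-_p$, and both are identically $1$ on $[0,p)$. Lemma~\ref{way-below} then gives non-continuity. This is the same mechanism the paper uses in its necessity proof for $\mathbb{E}^1_{[-M,M]}$, where the test point is chosen strictly inside the {\L}ukasiewicz summand.

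\textbf{Step 2} also needs repair, though it is fixable.
\begin{itemize}
\item Your claim that the representable terms equal $1$ is false. By Yoneda, $\inf_a\bigl((x\to a)\to(y\to a)\bigr)=y\to x$, which is $<1$ for $y>x$. This is harmless, since $\phi^-_x(y)\le y\to x$.
\item More seriously, using (3) at the point $y$ to identify $\phi^-_a(y)$ with $y\to a$ only covers $a<y$. The case $y=a<x$ is exactly where non-(S) t-norms break. In the example, $\tfrac34\to\tfrac12=\tfrac34>\tfrac12=\phi^-_{1/2}(\tfrac12)$, even though $\tfrac12\to-$ is continuous on $[0,\tfrac12)$.
\item The fix is to use (3) at the point $x$ instead. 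For $0<a<x$ we have $x\to a=\sup_{b<a}(x\to b)$, so by continuity of $\&$, $(x\to a)\mathbin{\&}\phi^-_x(y)\le\sup_{b<a}\bigl((x\to b)\mathbin{\&}(y\to x)\bigr)\le\phi^-_a(y)$ for every $y$. For $a\ge x$, simply $\phi^-_x\le\phi^-_a$.
\item Together with $\phi^-_x\le\mathrm{y}(x)$, this gives $\mathfrak{w}(-,x)=\phi^-_x$ for $x>0$. For $x=0$ you need $\mathfrak{w}(-,0)=\mathrm{y}(0)$ separately, because $\phi^-_0$ is an empty supremum.
\end{itemize}

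\textbf{Step 1} is not the obstacle you fear. Since $y\to-$ preserves infima, $\inf_{j\ge i}(y\to x_j)=y\to m_i$ with $m_i=\inf_{j\ge i}x_j$ increasing to $a=\liminf_j x_j$. So every net yields $\mathrm{y}(a)$ or $\phi^-_a$, according as $m_i$ eventually equals $a$ or not. No analysis of oscillating nets inside Archimedean summands is needed.
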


\begin{definition}
	Let  \(\&\) be a  left-continuous t-norm. For any \(f,g\in \tilde{\mathcal{G} }\),
	\[P(f,g)=\inf_{x\in [-M,M]} \left(f(x)\rightarrow g(x)\right).\]
\end{definition}
\begin{cor}
	Let  \(\&\) be a continuous t-norm. \((\tilde{\mathcal{G} },P)\) is a separated, conically cocomplete  and  conically complete \([0,1]\)-enriched category.
\end{cor}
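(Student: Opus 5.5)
We verify the three assertions in turn, mirroring the argument for $(\mathbb{E}^1_{[-M,M]},P)$ but working with one branch only.

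\emph{$[0,1]$-enriched category and separatedness.} Reflexivity $P(f,f)=\inf_x(f(x)\to f(x))=1$ follows from Proposition~\ref{properties of residual implication}(1). For transitivity we use Proposition~\ref{properties of residual implication}(3) pointwise: for $f,g,h\in\tilde{\mathcal G}$,
\[
P(g,h)\,\&\,P(f,g)\le \inf_x\bigl((f(x)\to g(x))\,\&\,(g(x)\to h(x))\bigr)\le\inf_x (f(x)\to h(x))=P(f,h),
\]
where the first inequality holds by monotonicity of $\&$. For separatedness, $P(f,g)=P(g,f)=1$ gives $f=g$ on $[-M,M]$. Outside $[-M,M]$ both functions equal $1$ (for $x\le -M$) or $0$ (for $x>M$) by condition (4), so $f=g$.

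\emph{Conical cocompleteness.} Given a family $\{g_i\}_{i\in D}\subseteq\tilde{\mathcal G}$, we take as candidate colimit the upper-semicontinuous modification $s=\widetilde{\sup_i g_i}$. Note that $\sup_i g_i\in\mathcal G$: it is non-increasing, equals $1$ on $(-\infty,-M]$, and equals $0$ on $(M,\infty)$. Its modification $s(x)=\inf_{x'<x}\sup_i g_i(x')$ keeps these boundary values, because $s(-M)=1$ and for $x>M$ one can pick $x'\in(M,x)$. Hence $s\in\tilde{\mathcal G}$. For any $h\in\tilde{\mathcal G}$, Proposition~\ref{properties of residual implication}(6) and the Yoneda-type computation from the proof of Proposition~\ref{conically cocomplete and conically complete in E} give
\[
\inf_e\Bigl(\sup_i P(e,g_i)\to P(e,h)\Bigr)=\inf_i P(g_i,h)=\inf_x\Bigl(\sup_i g_i(x)\to h(x)\Bigr),
\]
and Proposition~\ref{upper-semicontinuous except for countable point eq}, which needs continuity of $\&$, shows this equals $P(s,h)$. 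The middle equality $\inf_e\inf_i(P(e,g_i)\to P(e,h))=\inf_iP(g_i,h)$ holds as follows. The bound ``$\le$'' comes from taking $e=g_i$. The bound ``$\ge$'' comes from transitivity: $P(e,g_i)\,\&\,P(g_i,h)\le P(e,h)$.

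\emph{Conical completeness.} Here we take $m=\inf_i g_i$ and claim it is already in $\tilde{\mathcal G}$. It is non-increasing with the right boundary values, and an infimum of upper-semicontinuous functions is upper-semicontinuous, so no modification is needed. For the limit identity, Proposition~\ref{properties of residual implication}(7) gives $P(h,m)=\inf_x\inf_i(h(x)\to g_i(x))=\inf_iP(h,g_i)$. The dual Yoneda computation shows this equals $\mathcal P^\dagger(\sup_iP(g_i,-)\dots)$, namely $\inf_e\bigl(\sup_iP(g_i,e)\to P(h,e)\bigr)=\inf_iP(h,g_i)$, by the same two inequalities as in the cocomplete case.

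\emph{Main obstacle.} The only delicate step is the cocomplete case. There $\sup_i g_i$ may fail to be upper-semicontinuous, so we must replace it by its modification without changing the residuated infimum. That is exactly Proposition~\ref{upper-semicontinuous except for countable point eq}, and it is the reason continuity (not just left-continuity) of $\&$ is assumed.
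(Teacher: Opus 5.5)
Your proposal is correct and follows the paper's route. The paper's proof only says to rerun the argument of Proposition~\ref{conically cocomplete and conically complete in E} on the right branch: take the colimit to be $\widetilde{\sup_i g_i}$, justified by Proposition~\ref{upper-semicontinuous except for countable point eq}, and the limit to be $\inf_i g_i$. You carry this out in full detail.

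One case needs a separate check, both in your argument and in the paper's: the empty family, which is an allowed conical (co)weight. There $\sup_i g_i\equiv 0$ and $\inf_i g_i\equiv 1$ do not lie in $\mathcal G$. The colimit is instead the bottom element of $\tilde{\mathcal G}$ (equal to $1$ on $(-\infty,-M]$ and $0$ elsewhere), and the limit is the top element (equal to $1$ on $(-\infty,M]$ and $0$ elsewhere). Both are immediate to verify.
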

\begin{proof}
Separatedness follows directly from the definition of \(P\), while conical cocompleteness and conical completeness follow by applying the same argument as in the proof of Proposition~\ref{conically cocomplete and conically complete in E} to the right-branch space \(\widetilde{\mathcal G}\).
\end{proof}
\begin{prop}\label{way-below of crisp order}
\((\tilde{\mathcal{G}})_0=(\tilde{\mathcal{G}},\le)\) is a continuous lattice. For any \(f\in(\tilde{\mathcal{G}},\le)\) and any \(0<p<1\), if \(r\) is a continuous point of \(f\), then \(\mathcal{E}^r_{p,r}\ll f\) if and only if \(p<f(r)\).
\end{prop}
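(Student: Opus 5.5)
Throughout, I read $\mathcal{E}^r_{p,r}$ as the basic step function of $\tilde{\mathcal G}$ with value $1$ on $(-\infty,-M]$, value $p$ on $(-M,r]$ and value $0$ on $(r,\infty)$. It is non-increasing and upper-semicontinuous, so it lies in $\tilde{\mathcal G}$. The order on $(\tilde{\mathcal G})_0$ is pointwise $\le$. The plan has three steps: first establish the lattice structure, then the way-below characterization, then continuity.

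\textbf{Lattice structure.} By the corollary preceding the statement (and Corollary~\ref{complete lattice} applied to right branches), $(\tilde{\mathcal G},\le)$ is a complete lattice. Infima are pointwise infima. The supremum of a family $\{g_i\}$ is the upper-semicontinuous modification $\widetilde{h}(x)=\inf_{x'<x}h(x')$ of the pointwise supremum $h=\sup_i g_i$. This formula is the key tool below.

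\textbf{Way-below characterization, ($\Leftarrow$).} Assume $p<f(r)$ and let $D$ be directed with $f\le\sup D=\widetilde{h}$. Since $r$ is a continuity point of $f$, choose $r'>r$ with $f(r')>p$. Then $\widetilde{h}(r')>p$, so $h(x')>p$ for every $x'<r'$. Fix some $x'\in(r,r')$; there is $d\in D$ with $d(x')>p$. Monotonicity gives $d\ge p$ on $(-M,r]$, and $d=1$ on $(-\infty,-M]$, so $\mathcal{E}^r_{p,r}\le d$. This is the one place continuity at $r$ is needed: it lets us move strictly to the right of $r$, where the modification $\widetilde{h}$ still controls $h$.

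\textbf{Way-below characterization, ($\Rightarrow$).} Assume $\mathcal{E}^r_{p,r}\ll f$. Approximate $f$ from below by the chain
\[
f_n=\bigl((1-\tfrac1n)f\bigr)\vee\chi_{(-\infty,-M]}\in\tilde{\mathcal G}.
\]
The pointwise supremum of the $f_n$ is already $f$, which is upper-semicontinuous, so $\sup_n f_n=f$. Hence $\mathcal{E}^r_{p,r}\le f_n$ for some $n$, giving $p\le(1-\frac1n)f(r)$. If $f(r)=0$ this forces $p\le 0$, contradicting $p>0$. Otherwise $p<f(r)$.

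\textbf{Continuity.} Fix $f\in\tilde{\mathcal G}$ and let
\[
B_f=\{\mathcal{E}^r_{p,r} : r \text{ a continuity point of } f,\ 0<p<f(r)\}.
\]
Finite joins of elements of $\ll f$ are again $\ll f$, so the set of finite joins of $B_f$ is directed and lies in ${\Downarrow}f$. It remains to show that its supremum $\widetilde{h}$ equals $f$; clearly $\widetilde{h}\le f$.

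For the reverse inequality, note that continuity points of $f$ are dense, since $f$ is monotone. Taking such points $r\searrow x$ shows $h(x)\ge f(x^+)$ for $x\in(-M,M)$. Therefore
\[
\widetilde{h}(x)=\inf_{x'<x}h(x')\ge\inf_{x'<x}f(x'^+)\ge f(x),
\]
where the last step uses that $f$ is non-increasing and left-continuous. The boundary points $x\le -M$ and $x\ge M$ are handled directly by the constraints defining $\mathcal G$. Hence $f=\sup{\Downarrow}f$, and $(\tilde{\mathcal G})_0$ is a continuous lattice.

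I expect the main obstacle to be bookkeeping with the upper-semicontinuous modification, that is, handling suprema that are not pointwise. This affects both the ($\Leftarrow$) direction and the final density argument, while the ($\Rightarrow$) direction avoids it because the supremum of the chain is attained pointwise.
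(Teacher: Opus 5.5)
Your proof is correct and follows the paper's structure. You use a chain approximating $f$ from below for ($\Rightarrow$). For ($\Leftarrow$) you apply the join formula $\widetilde h(x)=\inf_{x'<x}h(x')$ at a point just right of $r$; the paper argues the same point by contradiction via a continuity point of the join and Theorem~\ref{continuous point of super and infimum}. For continuity you use density of the $\mathcal{E}^r_{p,r}$, and you actually prove $\sup B_f=f$, which the paper only asserts. Your scaled chain $(1-\frac1n)f\vee\chi_{(-\infty,-M]}$ is slightly cleaner than the paper's shifted chain, since its pointwise supremum is already $f$. One small fix: in ($\Rightarrow$), handle $r=-M$ separately. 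There evaluating at $r$ gives only $1\le 1$, but $f(-M)=1>p$ holds trivially.
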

\begin{proof}
We define:
\[
\mathcal{E}^r_{p,r}(x)=\begin{cases}
	1, & x\le -M;\\
	p, & -M<x\le r;\\
	0, & x>r.
\end{cases}
\]

\(\implies\) Suppose \(\mathcal{E}^r_{p,r}\ll f\). If \(r=-M\), then \(f(r)=f(-M)=1>p\), and the claim follows. Assume \(r>-M\). We construct a family of functions \(f_n\):
\[
f_n(x)=\begin{cases}
	1, & x\le -M;\\
	\bigl(f(x+\frac{1}{n})-\frac{1}{n}\bigr)\vee 0, & -M<x\le M;\\
	0, & x>M.
\end{cases}
\]
Then \(f_n\in\tilde{\mathcal{G}}\), and \(f_n\) is increasing with \(n\) (as \(f\) is non-increasing, both \(f(x+\frac{1}{n})\) and \(-\frac{1}{n}\) increase with \(n\)). Moreover, \(\bigvee_n f_n=f\), the join in \(\tilde{\mathcal{G}}\); indeed, pointwise \(\sup_n f_n(x)=f(x^+)\), and the upper-semicontinuous modification \(\widetilde{\sup_n f_n}\) recovers \(f\). Therefore, by \(\mathcal{E}^r_{p,r}\ll f\), there exists \(n_0\in\mathbb{N}\) such that \(\mathcal{E}^r_{p,r}\le f_{n_0}\). Hence \(\mathcal{E}^r_{p,r}(x)\le f_{n_0}(x)\) for all \(x\in[-M,M]\). Evaluating at \(r\), we have
\[
p=\mathcal{E}^r_{p,r}(r)\le f_{n_0}(r)=\bigl(f(r+\frac{1}{n_0})-\frac{1}{n_0}\bigr)\vee 0.
\]
As \(p>0\), this forces \(p\le f(r+\frac{1}{n_0})-\frac{1}{n_0}\). Since \(f\) is non-increasing, \(f(r+\frac{1}{n_0})\le f(r)\), whence
\[
p\le f(r)-\frac{1}{n_0}<f(r).
\]

$\Leftarrow$  Assume \(p<f(r)\) and \(r\) is a continuity point of \(f\). Suppose \(\mathcal{E}^r_{p,r}\not\ll f\). Then there is a directed family \(\{f_i\}_{i\in D}\) with \(f\le s:=\sup_{i\in D}f_i\) and \(\mathcal{E}^r_{p,r}\nleq f_i\) for all \(i\). From the shape of \(\mathcal{E}^r_{p,r}\) and the monotonicity of \(f_i\) we get \(f_i(r)<p\) for all \(i\). Since \(p<f(r)\) and \(f\) is continuous at \(r\), there is \(\delta>0\) with \(f(y)>p\) for all \(y\in(r-\delta,r+\delta)\cap[-M,M]\). Choose a continuity point \(y_0\) of \(s\) with \(r<y_0<r+\delta\). Then \(f(y_0)>p\); since \(f\le s\), \(s(y_0)\ge f(y_0)>p\). By Theorem~\ref{continuous point of super and infimum}, \(s(y_0)=\sup_{i\in D}f_i(y_0)\), so \(\sup_i f_i(y_0)>p\), and there is \(i_0\in D\) with \(f_{i_0}(y_0)>p\). As \(f_{i_0}\) is non-increasing and \(r<y_0\), \(f_{i_0}(r)\ge f_{i_0}(y_0)>p\), contradicting \(f_i(r)<p\) for all \(i\). Therefore \(\mathcal{E}^r_{p,r}\ll f\).

\((\tilde{\mathcal{G}},\le)\) is a complete lattice by Corollary \ref{complete lattice}.
For any \(f\in\tilde{\mathcal{G}}\), the preceding argument gives
\(f=\sup_{p,r}\mathcal{E}^r_{p,r}\) with each \(\mathcal{E}^r_{p,r}\ll f\). Since in a
complete lattice \(v_1,v_2\ll f\) implies \(v_1\vee v_2\ll f\), the set
\(\{v:v\ll f\}\) is directed; moreover
\(f=\sup_{p,r}\mathcal{E}^r_{p,r}\le\sup\{v:v\ll f\}\le f\), so \(f=\sup\{v:v\ll f\}\)
is a directed join. Therefore \((\tilde{\mathcal{G}},\le)\) is a continuous lattice.

\end{proof}

\begin{figure}[htbp]	
	\centering
	\begin{minipage}[t]{0.8\textwidth}
		\centering
		\begin{tikzpicture}	[scale=2]
			\draw[->] (-2.5,0)--(2.5,0);
			\draw[->] (0,-0.2)--(0,1.2); 
			\draw (-2,0)--(-1,1)--(1,0);
			\draw (-3/2,1)--(-3/2,1/3)--(-1/2,1/3); 
			\draw (-3/2,1)--(-3/2,2/3)--( -2,2/3);  
			
			\draw[dashed](0,1)--(-1,1);
			\draw[dashed](-1/2,1/3)--(0,1/3);
			\draw[dashed](-1,0)--(-1,1);
			\draw[dashed](-1/2,1/3)--(0,1/3);
			\draw[dashed](-3/2,0)--(-3/2,1/3);
			\draw[dashed](-1/2,0)--(-1/2,1/3);
			\draw[dashed](-2,0)--(-2,2/3);
			\draw[dashed](-3/2,2/3)--(0,2/3);
			
			\draw[fill] (0,1) circle(.01);
			\draw[fill] (0,0) circle(.01);
			\draw[fill] (-1,0) circle(.01);
			\draw[fill] (-1,1) circle(.01);
			\draw[fill] (0,1/3) circle(.01);
			\draw[fill] (0,1/3) circle(.01);
			\draw[fill] (0,2/3) circle(.01);
			\draw[fill] (-1/2,1/3) circle(.01);
			\draw[fill] (-2,2/3) circle(.01);
			
			\draw[fill] (-2,0) circle(.01);
			\draw[fill] (2,0) circle(.01);
			\draw[fill] (-3/2,1) circle(.01);
			
			\node (a) at (0,1) [right] {$1$};
			\node (b) at (-1,1) [above left] {$u$};
			\node (e) at (0,0) [below right] {$0$};
			\node (f) at  (-1,0) [below]{$-1$};
			\node (a) at (-2,0) [ below] {$-2$};
			\node (a) at (1,0) [ below] {$1$};
			\node (a) at (2,0) [ below] {$2$};
			\node (j) at (0,1/3) [ right] {$p_1$};
			\node (k) at (-1/2,0) [below] {$r_1$};
			\node (m) at (0,2/3) [ right] {$p_2$};
			\node (n) at (-3/2,0) [below] {$r_2$};
            \node (s) at (-3/2,1) [above left] {$w$};
       \node (x) at (2.5,0) [ below] {$x$};
        \node (y) at (0,1.2) [left] {$y$};
		\end{tikzpicture} 
	\end{minipage}
	\caption{ the graph of $u=(-2,-1,1)$, \(\mathcal{E}^r_{p_1,r_1}\) and   \(\mathcal{E}^\ell _{p_2,r_2} ,w=  \mathcal{E}^\ell _{p_2,r_2}\wedge \mathcal E^r_{p_1,r_1}\). }\label{crip order in function space}
\end{figure}

\begin{prop} \label{right branch way below}
	Let \( \& \) be a continuous t-norm satisfying the (S) condition.  For every \(f\in \tilde{\mathcal{G}}\), if \(r\) is a continuous point of \(f\) and \(p < f(r)\), 
	then \( \mathfrak{w}(\mathcal{E}^r_{p,r},f)=1\).  
\end{prop}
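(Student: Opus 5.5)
We verify the defining inequality of the way-below distributor directly. By definition,
\[
\mathfrak{w}(\mathcal{E}^r_{p,r},f)=\inf_{\phi\in\mathcal{I}\tilde{\mathcal G}}\bigl(P(f,\operatorname{colim}\phi)\to\phi(\mathcal{E}^r_{p,r})\bigr).
\]
So it suffices to show $\phi(\mathcal{E}^r_{p,r})\ge P(f,\operatorname{colim}\phi)$ for every forward Cauchy ideal $\phi$. The argument has three steps: rewrite $\phi$ as a directed conical weight, reduce the needed inequality to a statement about the reals $m_i(r)$, and then pass a supremum through the implication using condition (S).

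\textbf{Step 1: reduce $\phi$ to a directed conical weight.} Let $\phi=\sup_{i\in D}\inf_{j\ge i}P(-,f_j)$ for a forward Cauchy net $\{f_j\}$ in $\tilde{\mathcal G}$. Exactly as in the proof of Theorem~\ref{Yoneda limit of P}, conical completeness of $\tilde{\mathcal G}$ (the corollary above, via Proposition~\ref{conically cocomplete and conically complete in E}) gives meets $m_i=\inf_{j\ge i}f_j$. These satisfy
\[
\phi=\sup_i P(-,m_i),
\]
the $m_i$ increase with $i$, and $s:=\operatorname{colim}\phi$ is the upper-semicontinuous modification of $\sup_i m_i$. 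By Theorem~\ref{continuous point of super and infimum}, $s(y)=\sup_i m_i(y)$ at every continuity point $y$ of $s$.

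\textbf{Step 2: compute $\phi(\mathcal{E}^r_{p,r})$ and bound $t$.} Each $m_i$ is non-increasing, and $\mathcal{E}^r_{p,r}$ equals $1$ on $x\le -M$, $p$ on $(-M,r]$ and $0$ beyond $r$. Hence
\[
P(\mathcal{E}^r_{p,r},m_i)=p\to m_i(r),\qquad\text{so}\qquad \phi(\mathcal{E}^r_{p,r})=\sup_i\bigl(p\to m_i(r)\bigr).
\]
Put $t=P(f,s)$ and $a=\sup_i m_i(r)$. Since $r$ is a continuity point of $f$ and $f(r)>p$, we can pick $y_0>r$ with the following properties: $y_0$ is a continuity point of $s$ (these are dense), and $q:=f(y_0)>p$. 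By adjunction $t\mathbin{\&} q\le s(y_0)=\sup_i m_i(y_0)$. Because each $m_i$ is non-increasing and $y_0>r$, we get $\sup_i m_i(y_0)\le a$. Therefore
\[
t\le q\to a .
\]

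\textbf{Step 3: pass the supremum through the implication.} This is the main obstacle, and it is where (S) enters.
\begin{itemize}
\item If $a>p$, then some $m_i(r)\ge p$, so $p\to m_i(r)=1\ge t$ and we are done.
\item Otherwise $a\le p<q$. By antitonicity in the first variable, $p\to m_i(r)\ge q\to m_i(r)$ for every $i$. Condition (S), in form (3) of Proposition~\ref{implication operator is continuous at every point}, says $q\to-$ is continuous on $[0,q)$. The point $a$ lies in $[0,q)$ and the increasing net $m_i(r)$ converges to $a$. Hence
\[
\sup_i\bigl(q\to m_i(r)\bigr)=q\to a\ge t .
\]
\end{itemize}

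The key trick is replacing $p$ by the strictly larger $q=f(y_0)$. This avoids the delicate case $a=p$, where $p\to-$ need not be continuous at $p$. It is exactly why we need both hypotheses: continuity of $f$ at $r$ and the strict inequality $p<f(r)$. In either case $\phi(\mathcal{E}^r_{p,r})\ge t=P(f,\operatorname{colim}\phi)$, so every term in the infimum defining $\mathfrak{w}$ equals $1$, and $\mathfrak{w}(\mathcal{E}^r_{p,r},f)=1$.
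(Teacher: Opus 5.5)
Your proof is correct and follows the paper's argument. Both compute $\phi(\mathcal E^r_{p,r})=\sup_i\bigl(p\to m_i(r)\bigr)$, bound $P(f,\operatorname{colim}\phi)$ by $(\text{a value of } f \text{ exceeding } p)\to\sup_i m_i(r)$, and use (S) to pull the supremum out of the implication. The differences are minor: you evaluate at a single continuity point $y_0>r$ with $q=f(y_0)>p$, where the paper passes to the limit along $r_n\searrow r$ (via upper semicontinuity of $\to$) to reach $f(r)\to a$, and you spell out the case split $a>p$ versus $a\le p<q$ that the paper's final appeal to (S) leaves implicit.
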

\begin{proof}

    Suppose that \(p< f(r) \).
	Let \(g\) be the Yoneda limit of a forward Cauchy net \(\{g_i\}_{i \in D}\) in \(\tilde{\mathcal{G}}\) with \(\phi=\sup_{i\in D} \inf_{j\ge i} P(-,g_j )\).
    \begin{align*}
		\phi (\mathcal{E}^r_{p,r})=& \sup_{i\in D} \inf_{j\ge i} P(\mathcal{E}^r_{p,r},g_j )\\
		=&\sup_{i\in D}P(\mathcal{E}^r_{p,r}, \inf_{j\ge i}g_j )\\
		=&\sup_{i\in D}\left\{\inf_{x\in [-M,M]} (\mathcal{E}^r_{p,r}(x)\rightarrow \inf_{j\ge i}g_j (x))\right\}\\
		=&\sup_{i\in D}( p\rightarrow \inf_{j\ge i}g_j (r)).
	\end{align*}
	By  Theorem \ref{continuous and except for countable point},  there exists \(r_n\searrow r\) such that \[P(f,g)\le \inf_{n\in \mathbb{N}} (f(r_n)\rightarrow g(r_n))\] with each \(r_n \)  being a continuous point of  \(g\).
	For  every \(n\in \mathbb{N}\), \(g(r_n)\le g(r)\) by the monotonicity of \(g\). 
       \begin{align*}
             \inf_{n\in \mathbb{N}} (f(r_n)\rightarrow g(r_n)) &\le
          \lim_n f(r_n) \rightarrow \lim_{n} g(r_n) \tag{ By adjoint of a continuous \(\&\)}\\
          &\le f(r)\rightarrow \lim_{n} \sup_{i\in D} \inf_{j\ge i}g_j (r_n). \tag{ Since \(r\) is a continuous point of \(f\) and Theorem \ref{continuous point of super and infimum}}\\
         &\le f(r)\rightarrow \sup_{i\in D} \inf_{j\ge i}g_j (r) \tag{Since \(g\) is monotonic}
     \end{align*}
		Since \(\&\) satisfies the (S) condition and \(p<f(r)\), it follows that
		\[ P(f,g)\le f(r)\rightarrow \sup_{i\in D} \inf_{j\ge i}g_j (r) \le \sup_{i\in D}( p\rightarrow \inf_{j\ge i}g_j (r)).\]
		Hence, \( P(f,g)\le \phi (\mathcal{E}^r_{p,r})\) and \( \mathfrak{w}(\mathcal{E}^r_{p,r},f)=1\).
\end{proof}
\begin{lem}\label{finite join of enriched way below}
Let \(X\) be a conically cocomplete \([0,1]\)-enriched category.
If \(\mathfrak w(x_k,x)=1\) for \(k=1,\ldots,n\), then
\[
\mathfrak w\left(\bigvee_{k=1}^n x_k,x\right)=1.
\]
\end{lem}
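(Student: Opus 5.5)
The plan is to unwind the definition of the way-below distributor and reduce the claim to a pointwise inequality about each forward Cauchy ideal. Write $y=\bigvee_{k=1}^n x_k$ for the colimit of the finite conical weight $\sup_k X(-,x_k)$; it exists by conical cocompleteness. It is characterized by
\[
X(y,z)=\inf_{k}X(x_k,z)\quad\text{for all }z\in X.
\]
By definition, $\mathfrak w(y,x)=1$ means that for every $\phi\in\mathcal IX$ we have $X(x,\operatorname{colim}\phi)\le\phi(y)$. The hypothesis gives, for each $k$ and each such $\phi$,
\[
X(x,\operatorname{colim}\phi)\le\phi(x_k).
\]
So it suffices to prove that every forward Cauchy ideal $\phi$ satisfies
\[
\inf_k\phi(x_k)\le\phi\Bigl(\bigvee_k x_k\Bigr).
\]

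To prove this, write $\phi=\sup_{i\in D}\inf_{j\ge i}X(-,a_j)$ for a forward Cauchy net $\{a_j\}$, and fix $t<\inf_k\phi(x_k)$.

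\textbf{Step 1.} For each $k$, pick $i_k\in D$ with $\inf_{j\ge i_k}X(x_k,a_j)>t$.

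\textbf{Step 2.} Since $D$ is directed and $n$ is finite, choose $i\ge i_1,\dots,i_n$. Then for all $j\ge i$ and all $k$ we have $X(x_k,a_j)>t$.

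\textbf{Step 3.} Apply the colimit property with $z=a_j$: $X(y,a_j)=\inf_kX(x_k,a_j)$. A finite infimum of numbers each $>t$ is $>t$, and hence is $\ge t$.

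\textbf{Step 4.} Therefore $\phi(y)\ge\inf_{j\ge i}X(y,a_j)\ge t$. Letting $t\nearrow\inf_k\phi(x_k)$ gives the required inequality.

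Strict inequalities are used throughout so that the finite choice of indices works. If $\inf_k\phi(x_k)=0$ there is nothing to prove.

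The main obstacle is the interchange of $\inf_k$ with $\sup_i\inf_{j\ge i}$ in Step 2. It relies only on finiteness of the family and directedness of $D$, which is exactly why the lemma is stated for finite joins. The forward Cauchy property of the net is not otherwise needed. The case $n=0$, the empty join, is handled trivially in the same way when the bottom element exists, but for the intended application only $n\ge1$ matters.
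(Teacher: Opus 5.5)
Your proposal is correct and follows essentially the same route as the paper. Both reduce the claim to $\bigwedge_k\phi(x_k)\le\phi\bigl(\bigvee_k x_k\bigr)$ for every forward Cauchy ideal $\phi$, and both prove that inequality from the representation $\phi=\sup_i\inf_{j\ge i}X(-,a_j)$, the conical-join identity $X(\bigvee_k x_k,z)=\bigwedge_k X(x_k,z)$, and the directedness of the index set. You prove only the direction that is needed, whereas the paper asserts equality (the other half is just antitonicity of $\phi$). Your Steps 1--4 also write out the interchange that the paper leaves to a single sentence.
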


\begin{proof}
Every forward Cauchy ideal \(\phi\) preserves non-empty finite
conical joins in the form
\(\phi(\bigvee_{k=1}^n x_k)=\bigwedge_{k=1}^n\phi(x_k)\).
Indeed, this follows from the representation
\(\phi(y)=\sup_i\inf_{j\ge i}X(y,z_j)\), the conical-join identity
\(X(\bigvee_kx_k,z_j)=\bigwedge_kX(x_k,z_j)\), and the directedness
of the index set.

Now \(\mathfrak w(x_k,x)=1\) implies
\(X(x,\operatorname{colim}\phi)\le\phi(x_k)\) for every \(k\).
Hence
\[
X(x,\operatorname{colim}\phi)
\le\bigwedge_{k=1}^n\phi(x_k)
=\phi\left(\bigvee_{k=1}^n x_k\right).
\]
The conclusion follows directly from the definition of
\(\mathfrak w\).
\end{proof}
For a nonempty subset \(A\) of a lattice, define
\[
\operatorname{FinJoin}(A)
 :=\left\{\bigvee_{k=1}^{n}s_k
 \mid n\ge1,\ s_k\in A\right\}.
\]

\begin{prop}\label{fuzzy forward Cauchy ideal}
 Define
\[
D_f^r=
\operatorname{FinJoin}\left(
\left\{\mathcal E^r_{p,r}
\mid 0<p<f(r),\ r\text{ is a continuity point of }f\right\}
\right)
\]
where the joins are taken in
\((\widetilde{\mathcal G},\leq)\).
	\begin{align*}
		\phi_f(g)=\begin{cases}
			P(g,f), & f= \operatorname{lim} (\sup_{g\in\tilde{\mathcal{G} }} P(g,-)); \\
			\sup_{h\in D^r_f} P(g,h), & otherwise.
		\end{cases} 
	\end{align*}
	Then \( \phi_f \) is forward Cauchy ideal  and \(\operatorname{colim}\phi_f=f\).
\end{prop}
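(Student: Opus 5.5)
Throughout I work in \((\tilde{\mathcal G},P)\) and use its underlying order \(\le\), which is \(f\le g\) iff \(f(x)\le g(x)\) for all \(x\). Note that \(\mathcal E^r_{p,r}\le f\) whenever \(p<f(r)\).

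\textbf{Case split.} The first case of the definition is the degenerate one. The limit of the conical coweight \(\sup_{g\in\tilde{\mathcal G}}P(g,-)\) is the top element of \((\tilde{\mathcal G})_0\), namely \(f\equiv1\) on \([-M,M]\). In that case \(\phi_f=P(-,f)\) is representable. It is the forward Cauchy ideal of the constant net at \(f\), and its colimit is \(f\) by the Yoneda lemma. So the main work concerns the second case, where \(f\) is not the top.

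\textbf{Step 1: \(\phi_f\) is a forward Cauchy ideal.} I would first check that \(D^r_f\) is nonempty and directed in \((\tilde{\mathcal G})_0\).
\begin{itemize}
\item It is nonempty because \(f(-M)=1\), \(f\) is left-continuous, and \(f\) has only countably many discontinuities. Hence there is a continuity point \(r>-M\) close to \(-M\) with \(f(r)>0\).
\item It is directed because it is closed under finite joins.
\end{itemize}
Index the net by \(D^r_f\) itself, ordered by \(\le\), with \(x_h=h\). For \(h\le h'\) we have \(P(h,h')=1\), so the net is forward Cauchy with value \(1\). Since \(P(g,-)\) is monotone in the second variable for the underlying order, \(\inf_{h'\ge h}P(g,h')=P(g,h)\). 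Therefore
\[
\sup_{h}\inf_{h'\ge h}P(g,h')=\sup_{h\in D^r_f}P(g,h)=\phi_f(g),
\]
so \(\phi_f\) is a forward Cauchy ideal.

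\textbf{Step 2: \(\operatorname{colim}\phi_f=f\).} Since \(\phi_f\) is conical, by the corollary following Proposition~\ref{conically cocomplete and conically complete in E} (the argument of that proposition transferred to \(\tilde{\mathcal G}\)), its colimit is the join \(\bigvee D^r_f\) in \((\tilde{\mathcal G})_0\). This join is the upper-semicontinuous modification of the pointwise supremum. It remains to show \(\bigvee D^r_f=f\).
\begin{itemize}
\item The inequality \(\le\) holds because each generator satisfies \(\mathcal E^r_{p,r}\le f\), so every finite join of generators is also \(\le f\).
\item For \(\ge\), fix \(x\in(-M,M]\) at which the pointwise supremum \(s(x)\) might fall below \(f(x)\). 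For any continuity point \(r\ge x\) of \(f\) and any \(p<f(r)\), we get \(s(x)\ge p\), hence \(s(x)\ge f(r)\).
\item If \(x\) is itself a continuity point, take \(r=x\), giving \(s(x)\ge f(x)\).
\item Otherwise, note that continuity points are dense, \(f\) is left-continuous, and the modified supremum is \(\inf_{x'<x}s(x')\). For each \(x'<x\), choosing a continuity point \(r\in[x',x)\) gives \(s(x')\ge f(r)\ge f(x)\), hence \(\widetilde s(x)\ge f(x)\).
\end{itemize}

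\textbf{Expected obstacles.} The delicate points are the boundary behaviour at \(-M\) and the handling of discontinuity points via the modification \(\widetilde{\,\cdot\,}\). I expect the latter to be the main obstacle: it requires invoking Theorem~\ref{continuous point of super and infimum} correctly to identify the join with the modified pointwise supremum.
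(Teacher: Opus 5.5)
Your route is the paper's: $D^r_f$ is directed because it is closed under finite joins, so indexing it by itself gives a monotone, hence forward Cauchy, net whose ideal is $\sup_{h\in D^r_f}P(-,h)$. The colimit of this conical weight is $\bigvee D^r_f$, which you show equals $f$. Your pointwise check of $\bigvee D^r_f=f$ through the upper-semicontinuous modification is more careful than the paper's one-line assertion, and it rightly does not use way-belowness, which this statement does not need.

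\textbf{The case split is wrong, and this breaks Step 1.} The limit of a conical coweight $\sup_{b\in B}P(b,-)$ satisfies $P(x,\lim)=\inf_{b\in B}P(x,b)$, so it is the meet of $B$. For $B=\widetilde{\mathcal G}$ it is therefore the \emph{least} element $f_\bot$ of $(\widetilde{\mathcal G})_0$, where $f_\bot=1$ on $(-\infty,-M]$ and $f_\bot=0$ on $(-M,\infty)$. It is not the top element. The paper does exactly this computation for $u_*$ at the start of the proof that $(\mathbb E^1_{[-M,M]},P)$ is a $[0,1]$-enriched domain under (S).

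As a result, your ``main case'' contains $f_\bot$, and there $D^r_{f_\bot}$ is empty. Indeed, $f_\bot(r)=0$ for every $r>-M$, and $-M$ is a discontinuity point, so no admissible pair $(p,r)$ exists. Then $\sup_{h\in\emptyset}P(-,h)$ is the zero weight, which is not a forward Cauchy ideal. Your nonemptiness argument misses this because the inference is invalid. The facts $f(-M)=1$ and left-continuity of $f$ say nothing about $f$ just to the right of $-M$, which is what you need. This failure at $f_\bot$ is exactly why the statement has its first clause.

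\textbf{Repair.} Apply your representable argument to $f=f_\bot$; it works for any $f$. For $f\neq f_\bot$, choose $x_0>-M$ with $f(x_0)>0$. Then $f>0$ on $(-M,x_0]$ by monotonicity. By density of continuity points, pick a continuity point $r\in(-M,x_0)$ and some $p\in(0,f(r))$; then $\mathcal E^r_{p,r}\in D^r_f$, so $D^r_f\neq\emptyset$. The top element is now just an ordinary instance of your Steps 1 and 2, which go through unchanged.
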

\begin{proof}
    The set \(D_f^r\) is directed. Indeed, if
\(h_1=\bigvee_{i=1}^{m}a_i\) and
\(h_2=\bigvee_{j=1}^{n}b_j\) belong to \(D_f^r\), then
\[
h_1\vee h_2
=
\bigvee\{a_1,\ldots,a_m,b_1,\ldots,b_n\}
\in D_f^r
\]
is a common upper bound of \(h_1\) and \(h_2\).
Moreover, every member of \(D_f^r\) is way below \(f\), since
finite joins of elements way below \(f\) are again way below \(f\).
Finally,
\[
\bigvee D_f^r=\bigvee\mathcal E^r_{p,r}=f.
\]
Consequently, \(\phi_f=\sup_{h\in D_f^r}P(-,h)\) is a forward
Cauchy ideal and \(\operatorname{colim}\phi_f=f\).
\end{proof}

\begin{thm}\label{G is a real enriched domain}
	Let \( \& \) be a continuous t-norm satisfying the (S) condition. \((\tilde{\mathcal{G} },P)\) is a \([0,1]\)-enriched domain, i.e.,
	\[
	\mathfrak{w}(g, f) = \inf_{\phi \in \mathcal{I}\tilde{\mathcal{G} }} \left( P(f, \operatorname{colim} \phi) \to \phi(g) \right)=\phi_f(g).
	\]
\end{thm}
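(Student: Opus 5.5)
The plan is to apply Lemma~\ref{way-below}. Yoneda completeness of $(\tilde{\mathcal G},P)$ follows exactly as in Theorem~\ref{Yoneda limit of P}, using the conical (co)completeness of $\tilde{\mathcal G}$ from the corollary above. So it suffices to show, for each $f\in\tilde{\mathcal G}$, that $\mathfrak w(-,f)=\phi_f$. Proposition~\ref{fuzzy forward Cauchy ideal} then tells us that this weight is a forward Cauchy ideal with colimit $f$. I would prove the equality $\mathfrak w(-,f)=\phi_f$ by establishing two inequalities. I treat the generic case $\phi_f=\sup_{h\in D^r_f}P(-,h)$; the degenerate case is the bottom-type $f$, handled by the same argument applied directly.

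\emph{Step 1: $\phi_f\le\mathfrak w(-,f)$.} Each generator $\mathcal E^r_{p,r}$ of $D^r_f$ satisfies $\mathfrak w(\mathcal E^r_{p,r},f)=1$ by Proposition~\ref{right branch way below}. Lemma~\ref{finite join of enriched way below} then gives $\mathfrak w(h,f)=1$ for every $h\in D^r_f$. Since $\mathfrak w$ is a distributor, it is contravariant in the first variable: $P(g,h)\,\&\,\mathfrak w(h,f)\le\mathfrak w(g,f)$. Hence $P(g,h)\le\mathfrak w(g,f)$ for each $h\in D^r_f$, and taking the supremum over $h$ gives $\phi_f(g)\le\mathfrak w(g,f)$.

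\emph{Step 2: $\mathfrak w(-,f)\le\phi_f$.} The definition of $\mathfrak w$ is an infimum over all $\phi\in\mathcal I\tilde{\mathcal G}$. Specializing it to $\phi=\phi_f$, which lies in $\mathcal I\tilde{\mathcal G}$ and has colimit $f$, gives
\[
\mathfrak w(g,f)\le P(f,\operatorname{colim}\phi_f)\to\phi_f(g)=P(f,f)\to\phi_f(g)=1\to\phi_f(g)=\phi_f(g),
\]
using Proposition~\ref{properties of residual implication}(4).

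Combining the two steps, $\mathfrak w(-,f)=\phi_f\in\mathcal I\tilde{\mathcal G}$ with colimit $f$, and Lemma~\ref{way-below} yields continuity. Together with Yoneda completeness, $(\tilde{\mathcal G},P)$ is a $[0,1]$-enriched domain.

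The main obstacle is Step 1, specifically checking that the hypotheses of Proposition~\ref{right branch way below} and Lemma~\ref{finite join of enriched way below} really hold in $\tilde{\mathcal G}$. For the lemma this means that joins in $(\tilde{\mathcal G})_0$ of the $\mathcal E^r_{p,r}$ are conical joins in the enriched sense, which follows from the colimit formula in Proposition~\ref{conically cocomplete and conically complete in E}. The proposition also uses the (S) condition: it is needed to pass from $f(r)\to(\cdot)$ to $\sup_i(p\to\cdot)$ when $p<f(r)$, and this is exactly where (S) enters. I would also verify separately that the distributor inequality holds for $\mathfrak w$ as defined. This follows because each $\phi\in\mathcal I\tilde{\mathcal G}$ is a weight, so $P(g,h)\,\&\,\phi(h)\le\phi(g)$, and one then uses adjunction inside the infimum.
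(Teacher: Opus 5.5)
Your proposal is correct and follows the paper's proof. Both obtain $\mathfrak{w}(h,f)=1$ for all $h\in D^r_f$ from Proposition~\ref{right branch way below} and Lemma~\ref{finite join of enriched way below}, then deduce $\phi_f\le\mathfrak{w}(-,f)$, and get the reverse inequality by taking $\phi=\phi_f$; your contravariance of $\mathfrak{w}$ is what the paper's chain of adjunction equivalences proves inline via the weight property of each $\phi\in\mathcal{I}\tilde{\mathcal{G}}$, and your explicit appeals to Yoneda completeness and Lemma~\ref{way-below} only spell out steps the paper leaves implicit.
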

\begin{proof}
	The verification is straightforward for the case \( f = \operatorname{lim} (\sup_{g \in \tilde{\mathcal{G}}} P(g,-)) \). Hence, we only need to consider the remaining case.
Because of Proposition \ref{right branch way below} and Lemma\ref{finite join of enriched way below}, we have \[
  \mathfrak{w} (h,f)=1, \text{ for every } h\in D^r_f. 
  \]
	For any \(\phi \in \mathcal{I}\tilde{\mathcal{G} }\), 
	\begin{align*}
		P(f,\operatorname{colim}\phi)\le \inf_{h\in D_f^r} \phi (h)
		\iff & P(f,\operatorname{colim}\phi)\le \inf_{h\in D_f^r}\inf_{g\in \tilde{\mathcal{G} }}\left(P(g,h) \rightarrow \phi (g)\right)\\
		\iff&P(f,\operatorname{colim}\phi)\le \inf_{g\in \tilde{\mathcal{G} }}\inf_{h\in D_f^r} \left(P(g,h) \rightarrow \phi (g)\right) \\
		\iff&P(f,\operatorname{colim}\phi)\le \left(\sup_{h\in D_f^r} P(g,h) \right)\rightarrow \phi (g) \tag{ for every \(g\in \tilde{\mathcal{G} }\)}\\
		\iff&\sup_{h\in D_f^r} P(g,h) \le P(f,\operatorname{colim}\phi) \rightarrow \phi (g) \tag{ for every \(g\in \tilde{\mathcal{G} }\)} \\
		\iff& \phi_f(g)\le \mathfrak{w}(g, f) \tag{ for every \(g\in \tilde{\mathcal{G} }\)}.
	\end{align*}
	The converse inequality is trivial. Hence,  \(\mathfrak{w}(g, f) =\phi_f(g) \). Therefore \((\tilde{\mathcal{G} },P)\) is a \([0,1]\)-enriched domain.
\end{proof} 
\begin{definition}
	Let  \(\&\) be a  left-continuous t-norm. For any \(f,g\in \tilde{\mathcal{F}}\),
 \[P(f,g)=\inf_{x\in [-M,M]} \left(g(x)\rightarrow f(x)\right).\]
\end{definition}
\begin{cor}
	Let  \(\&\) be a continuous t-norm. \((\tilde{\mathcal{F}},P)\) is a separated, conically cocomplete  and  conically complete \([0,1]\)-enriched category.
\end{cor}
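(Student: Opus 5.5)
The plan is to mirror the corollary already proved for \((\tilde{\mathcal G},P)\), turning the left-branch space into the right-branch one by a reflection. Three properties need checking: that \(P\) is a \([0,1]\)-enriched category on \(\tilde{\mathcal F}\), that it is separated, and that it is conically cocomplete and conically complete.

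\textbf{Reflection.} For \(f\in\tilde{\mathcal F}\), define \(\sigma f(x):=\lim_{y\nearrow -x}f(y)\). Equivalently, \(\sigma f\) is the upper-semicontinuous modification of \(x\mapsto f(-x)\). Since \(f\) is non-decreasing and right-continuous, \(x\mapsto f(-x)\) is non-increasing and left-continuous off countably many points. After modification, \(\sigma f\in\tilde{\mathcal G}\), and \(\sigma\) is a bijection.

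Reflection alone does not preserve \(P\) pointwise, so I will instead argue directly, following the structure of the proofs of Proposition~\ref{conically cocomplete and conically complete in E} and of the \(\tilde{\mathcal G}\) corollary.

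\textbf{Enriched category and separatedness.} Reflexivity is immediate, because \(f(x)\to f(x)=1\). For transitivity I take the left-branch half of the computation in the proof that \(P\) is a fuzzy order on \(\mathbb E^1\). For each \(x\),
\[(g(x)\to f(x))\&(h(x)\to g(x))\le h(x)\to f(x),\]
and taking infima gives \(P(f,g)\&P(g,h)\le P(f,h)\) in the order required by the definition. For separatedness, suppose \(P(f,g)=P(g,f)=1\). Then \(g\le f\) and \(f\le g\) pointwise on \([-M,M]\). Outside \([-M,M]\) both functions are forced to equal \(0\) or \(1\) by condition (1), so \(f=g\).

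\textbf{Conical (co)completeness.} Let \(\{f_i\}_{i\in D}\subseteq\tilde{\mathcal F}\). Following the left branch in Proposition~\ref{conically cocomplete and conically complete in E}, the candidates are:
\begin{itemize}
\item colimit of \(\sup_i P(-,f_i)\): \(s=\inf_i f_i\);
\item limit of \(\sup_i P(f_i,-)\): \(m=\widetilde{\sup_i f_i}\), the right-continuous (upper-semicontinuous) modification of \(\sup_i f_i\).
\end{itemize}

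Both candidates lie in \(\tilde{\mathcal F}\):
\begin{itemize}
\item monotonicity and the boundary conditions are preserved under infima and suprema;
\item an infimum of upper-semicontinuous functions is upper-semicontinuous;
\item the modification of \(\sup_i f_i\) restores upper semicontinuity without changing the values at \(\pm M\) that matter.
\end{itemize}
One small point needs care: \(s(M)=\inf_i f_i(M)=1\) holds by condition (1).

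For the colimit I run the chain
\[\inf_e(\phi(e)\to P(e,w))=\inf_i P(f_i,w)=\inf_x\bigl(w(x)\to\inf_i f_i(x)\bigr)=P(s,w).\]
This uses Proposition~\ref{properties of residual implication}(6),(7) and the Yoneda-type identity \(\inf_e(P(e,f_i)\to P(e,w))=P(f_i,w)\).

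For the limit the analogous chain produces \(\inf_x(\sup_i f_i(x)\to w(x))\). Here I invoke the \(\mathcal F\)-version of Proposition~\ref{upper-semicontinuous except for countable point eq} to replace \(\sup_i f_i\) by its modification \(m\).

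\textbf{Main obstacle.} The delicate step is this last replacement. The symmetric version of Proposition~\ref{upper-semicontinuous except for countable point eq} must be applied with the modification taken from the right, \(\widetilde f(x)=\inf_{x'>x}f(x')\). The argument needs continuity of \(\&\) together with right-continuity of \(w\) in order to pass \(t\&\inf_{y>x}\) through the infimum. I expect this to go through exactly as in the proof given for \(\mathcal G\), with the inequalities mirrored.
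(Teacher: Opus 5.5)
Your direct argument is correct and follows the paper's route. The paper gives no separate proof here; for the $\tilde{\mathcal G}$ analogue it says to rerun the argument of Proposition~\ref{conically cocomplete and conically complete in E} on the branch space. That is exactly what you do: colimit $\inf_i f_i$, limit $\widetilde{\sup_i f_i}$ (the right-continuous modification), and the left-branch form of Proposition~\ref{upper-semicontinuous except for countable point eq}, which uses continuity of $\&$ and right-continuity of $w$.

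The reflection you discarded was mis-specified, but the plain map $f\mapsto f(-\,\cdot)$ works with no modification. It already lands in $\tilde{\mathcal G}$, since right-continuity becomes left-continuity, which for non-increasing maps is upper semicontinuity. Moreover $P(f,g)=P\bigl(g(-\,\cdot),f(-\,\cdot)\bigr)$, so it is an isomorphism $(\tilde{\mathcal F},P)\cong(\tilde{\mathcal G},P)^{\mathrm{op}}$. Since passing to the opposite swaps conical limits and colimits, the statement then follows at once from the $\tilde{\mathcal G}$ corollary.
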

\begin{cor}\label{crisp order in left branch and left branch way below}
	\((\tilde{\mathcal{F}})_0=(\tilde{\mathcal{F}},\ge )\) is a continuous lattice.  For any \(f\in \tilde{\mathcal{F} } \) and any \(0<p<1\), if \(r\) is a continuous point of \(f\), then \(\mathcal{E}^\ell_{p,r}\ll f\) if and only if \( p>f(r)\).
 \[\mathcal{E}^\ell_{p,r}=\begin{cases}
	0, & x<-M;\\
	p, & -M\le x< r;\\
	1, & r\le x.
\end{cases}\]
\end{cor}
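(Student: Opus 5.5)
The plan is to obtain the corollary from Proposition \ref{way-below of crisp order} by a reflection that turns left branches into right branches, rather than by redoing the argument. Define $\sigma:\tilde{\mathcal F}\to\tilde{\mathcal G}$ by
\[
\sigma(f)(x)=f\bigl((-x)^-\bigr)=\lim_{y\nearrow -x}f(y).
\]
Here we take the left limit, with the convention $\sigma(f)(x)=1$ for $x\le -M$. Plain $f(-x)$ would be non-increasing but only lower-semicontinuous, since $f$ is right-continuous; the left-limit correction restores upper semicontinuity. We also need $\sigma(f)=0$ for $x>M$, which holds because $f(y)=0$ for $y<-M$.

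For the inverse we use the analogous right-limit correction, $\sigma^{-1}(g)(x)=g\bigl((-x)^+\bigr)$. We must check two things. First, $\sigma$ is a bijection, because at continuity points nothing changes and each semicontinuous function is determined by its values on a dense set. Second, $\sigma$ is order-preserving in the sense that $f_1\ge f_2$ pointwise if and only if $\sigma(f_1)\ge\sigma(f_2)$. Together these give an order isomorphism $(\tilde{\mathcal F},\ge)\cong(\tilde{\mathcal G},\le)$ once the order directions are matched.

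Concretely, the underlying order on $\tilde{\mathcal F}$ is $f\sqsubseteq g$ if and only if $P(f,g)=1$, i.e.\ $g\le f$ pointwise. On $\tilde{\mathcal G}$ it is $f\sqsubseteq g$ if and only if $f\le g$. So the isomorphism should be $f\mapsto \sigma(f)$ composed with a further order reversal. The clean way is to use $1-\cdot$ combined with the reflection: $\tau(f)(x)=1-f(-x)$. This is non-increasing and is $1$ for $x\le -M$. Its upper semicontinuity requires $f$ to be lower semicontinuous at $-x$, which fails in general, so a one-sided-limit correction is again needed at the countably many discontinuities.

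Rather than fight these corrections, the more robust route is to repeat the proof of Proposition \ref{way-below of crisp order} verbatim with inequalities reversed. Completeness of $(\tilde{\mathcal F},\ge)$ follows from Corollary \ref{complete lattice} applied to left branches, or equally from Proposition \ref{conically cocomplete and conically complete in E}. For $(\Rightarrow)$, approximate $f$ from below in $\sqsubseteq$, i.e.\ pointwise from above, by
\[
f_n(x)=\Bigl(f\bigl(x-\tfrac1n\bigr)+\tfrac1n\Bigr)\wedge 1\quad\text{on }[-M,M],
\]
and set $f_n=0$ for $x<-M$. Here we must check that right-continuity survives, that the sequence is directed in $\sqsubseteq$, and that the join, computed via Theorem \ref{continuous point of super and infimum} as $\inf_n f_n$, equals $f$. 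Then $\mathcal E^\ell_{p,r}\sqsubseteq f_{n_0}$ evaluated at a point just left of $r$, combined with continuity of $f$ at $r$, gives $p\ge f(r-\tfrac1{n_0})+\tfrac1{n_0}$, and letting that point tend to $r$ yields $p>f(r)$. Unlike the right-branch case, $\mathcal E^\ell_{p,r}$ takes the value $p$ only on $[-M,r)$, which is why continuity at $r$ is used here.

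For $(\Leftarrow)$, given a directed family whose join $s$, computed as $\inf_i f_i$ where this is usc, satisfies $s\le f$, pick a continuity point $y_0\in(r-\delta,r)$ of $s$ with $f(y_0)<p$. Then some $f_{i_0}(y_0)<p$, and monotonicity gives $f_{i_0}\le p$ on $[-M,y_0]$. Comparing on $[y_0,r)$ is the main obstacle, because on that interval we only know $f_{i_0}(x)\le f_{i_0}$ of nearby points from the right. To handle it, choose $y_0$ as close to $r$ as needed and use the freedom in picking $\delta$ so the bound holds; if that fails, replace the target element by $\mathcal E^\ell_{p,r'}$ with $r'<r$ nearby and use the fact that the $\ll$ relation is downward closed. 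Finally, continuity of the lattice follows exactly as before: $f=\inf_{p,r}\mathcal E^\ell_{p,r}$ pointwise, i.e.\ $f$ is the $\sqsubseteq$-join of elements way below it, and finite joins of way-below elements remain way below.
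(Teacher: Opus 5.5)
Mirroring Proposition~\ref{way-below of crisp order} is the intended route, since the paper states this corollary without proof as its mirror image. But both directions of your mirror fail as written.

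\textbf{The direction $(\Rightarrow)$.} The shift has the wrong sign. With $f_n(x)=\bigl(f(x-\tfrac1n)+\tfrac1n\bigr)\wedge1$, the term $f(x-\tfrac1n)$ increases in $n$ while $\tfrac1n$ decreases. So the family is neither monotone nor directed, need not satisfy $f_n\ge f$, and converges to the left limit $f(x^-)$ rather than to $f$. For example, take $M=1$ and $f=\chi_{[0,\infty)}$. Then $f_2(0.4)=\tfrac12<1=f_3(0.4)$ but $f_2(0)=\tfrac12>\tfrac13=f_3(0)$, and no $f_k$ lies pointwise below both.

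Even granting a suitable family, $\mathcal E^\ell_{p,r}\sqsubseteq f_{n_0}$ only gives $f(y)+\tfrac1{n_0}\le p$ for $y<r-\tfrac1{n_0}$. Since $n_0$ is fixed by the way-below property, you cannot let that point tend to $r$, and continuity of $f$ at $r$ says nothing about $f$ on $[r-\tfrac1{n_0},r]$. Your formula can also give $f_n(M)<1$, so $f_n\notin\tilde{\mathcal F}$.

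The correct choice is $f_n(x)=\bigl(f(x+\tfrac1n)+\tfrac1n\bigr)\wedge1$ on $[-M,M)$, with $f_n=0$ for $x<-M$ and $f_n=1$ for $x\ge M$. This family decreases pointwise in $n$, and by right continuity $\inf_n f_n=f$, so it is a $\sqsubseteq$-chain with join $f$. For $r>-M$, take any $x\in[\max(-M,r-\tfrac1{n_0}),r)$. Then $f(r)+\tfrac1{n_0}\le f_{n_0}(x)\le p$, where the cap is inactive because $p<1$, so $f(r)<p$. For $r=-M$, continuity at $-M$ forces $f(-M)=0<p$.

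\textbf{The direction $(\Leftarrow)$.} Your ``main obstacle'' comes from copying the right-branch proof too literally, and the proposed repair fails. Joins in $(\tilde{\mathcal F},\sqsubseteq)$ are plain pointwise infima, because an infimum of non-decreasing upper-semicontinuous functions is again such a function. So you need no continuity point of $s$. The evaluation point should also sit at $r$, not to its left. From $\inf_i f_i(r)\le f(r)<p$ you get $i_0$ with $f_{i_0}(r)<p$. By monotonicity $f_{i_0}\le p$ on $[-M,r)$, that is, $\mathcal E^\ell_{p,r}\sqsubseteq f_{i_0}$, and continuity at $r$ is not even used.

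Your fallback via $\mathcal E^\ell_{p,r'}$ with $r'<r$ runs the wrong way. Since $\mathcal E^\ell_{p,r'}\ge\mathcal E^\ell_{p,r}$ pointwise, you have $\mathcal E^\ell_{p,r'}\sqsubseteq\mathcal E^\ell_{p,r}$. Downward closure of $\ll$ therefore only passes way-belowness from $\mathcal E^\ell_{p,r}$ down to $\mathcal E^\ell_{p,r'}$, not back up.

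\textbf{The reflection maps.} Both are also off. $\sigma(f)$ is right-continuous, hence not upper semicontinuous as a non-increasing function. $\tau(f)$ is non-decreasing and equals $0$ for $x\le-M$. The idea can be salvaged without reflecting at all. Set $\Theta(f)(x)=1-f(x^-)$. This is an order isomorphism $(\tilde{\mathcal F},\sqsubseteq)\to(\tilde{\mathcal G},\sqsubseteq)$ with inverse $g\mapsto 1-g(x^+)$. It maps $\mathcal E^\ell_{p,r}$ to $\mathcal E^r_{1-p,r}$ and preserves continuity points, with $\Theta(f)(r)=1-f(r)$. The corollary then follows immediately from Proposition~\ref{way-below of crisp order}.
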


\begin{cor}\label{left branch way below}
	Let \( \& \) be a continuous t-norm satisfying the (S) condition. For any  \(f\in \tilde{\mathcal{F}} \), if \(r\) is a continuous point of \(f\) and \( f(r)<p\), then \(\mathfrak{w}(\mathcal{E}^\ell_{p,r},f)=1\),  where \(\mathcal{E}^\ell_{p,r}\)
(denoted \(\mathcal{E}^\ell_{p_2,r_2}\) in Figure \ref{crip order in function space})
and \(f=u^\ell\).
\end{cor}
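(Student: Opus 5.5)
The plan is to dualize the argument of Proposition \ref{right branch way below}. The left branch space carries $P(f,g)=\inf_x\bigl(g(x)\rightarrow f(x)\bigr)$ with underlying order reversed pointwise. Equivalently, the reflection $x\mapsto -x$ composed with $f\mapsto f$ converts a non-decreasing upper-semicontinuous function into a non-increasing one. Upper semicontinuity then turns into lower semicontinuity, so I will work directly rather than through a formal transfer.

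Fix $f\in\tilde{\mathcal F}$, a continuity point $r$ of $f$, and $p>f(r)$. Let $\{g_i\}_{i\in D}$ be a forward Cauchy net in $\tilde{\mathcal F}$ with weight $\phi=\sup_i\inf_{j\ge i}P(-,g_j)$ and colimit $g$.

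\textbf{Step 1: evaluate $\phi(\mathcal E^\ell_{p,r})$.} By the conical completeness of $\tilde{\mathcal F}$, $\inf_{j\ge i}P(-,g_j)=P(-,m_i)$, where $m_i$ is the meet in $P$. Pointwise, $m_i$ is the upper-semicontinuous modification of $\sup_{j\ge i}g_j$; this is the formula for $m^\ell$ in Theorem \ref{continuous point of super and infimum}. Then
\[
P(\mathcal E^\ell_{p,r},m_i)=\inf_x\bigl(m_i(x)\rightarrow \mathcal E^\ell_{p,r}(x)\bigr)=m_i(r^-)\rightarrow p,
\]
since $\mathcal E^\ell_{p,r}=p$ on $[-M,r)$, it equals $1$ for $x\ge r$, and $m_i$ is non-decreasing. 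So $\phi(\mathcal E^\ell_{p,r})=\sup_i\bigl(\lim_{x\nearrow r} m_i(x)\rightarrow p\bigr)$.

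\textbf{Step 2: bound $P(f,g)$.} By Theorem \ref{continuous and except for countable point}, choose $r_n\nearrow r$ with each $r_n$ a continuity point of $g$. By Theorem \ref{continuous point of super and infimum}, at such points $g(r_n)=\inf_i m_i(r_n)$, because $g$ is the conical join of the increasing family $\{m_i\}$ and that join is pointwise $\inf$ in the $\ell$-branch. Using the continuity of $\&$ (so $\rightarrow$ interacts with limits as in the right-branch proof) and the continuity of $f$ at $r$, I get
\[
P(f,g)\le\inf_n\bigl(g(r_n)\rightarrow f(r_n)\bigr)\le \lim_n g(r_n)\rightarrow f(r).
\]
Monotonicity then gives $\lim_n g(r_n)\ge \inf_i \lim_{x\nearrow r}m_i(x)$ up to the usual exchange. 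This is the delicate point: I intend to use $r_n$ strictly left of $r$, so that the exchange $\lim_n\inf_i=\inf_i\lim_n$ only needs the inequality in the direction $g(r_n)\le m_i(r_n)\le m_i(r^-)$. That yields $\lim_n g(r_n)\le\inf_i m_i(r^-)$, which is the wrong direction for a bound of the form $a\rightarrow f(r)$ with $a$ decreasing.

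So instead I will take a point $y_0<r$ close to $r$ with $f(y_0)$ still $<p$, which is possible by continuity of $f$ at $r$. I then argue with the antecedent at $y_0$ and use $m_i(y_0)\le m_i(r^-)$. The antitonicity of $\rightarrow$ in its first argument then goes the right way: $\inf_i m_i(r^-)\rightarrow p\ \ge\ $ the expression bounding $P(f,g)$.

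\textbf{Step 3: invoke (S).} Set $a_i=m_i(r^-)$; the $a_i$ decrease in $i$. I need
\[
P(f,g)\le \Bigl(\inf_i a_i\Bigr)\rightarrow f(r)\le\sup_i\bigl(a_i\rightarrow p\bigr),
\]
using $f(r)<p$. If $\inf_i a_i\le p$ and the infimum is attained or approached from above, I use condition (3) of Proposition \ref{implication operator is continuous at every point}. Continuity of $q\rightarrow -$ is not the right tool here; I instead use continuity of $\rightarrow$ off the diagonal (condition (2)) at the point $(\inf_i a_i,\,f(r))$. This point is off the diagonal unless $\inf_i a_i=f(r)<p$, and the strict gap $f(r)<p$ lets me replace $f(r)$ by $p$ and pass to the limit over $a_i\searrow\inf_i a_i$. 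Thus
\[
\Bigl(\inf_i a_i\Bigr)\rightarrow f(r)\le \Bigl(\inf_i a_i\Bigr)\rightarrow p'\le\sup_i (a_i\rightarrow p)
\]
for some $f(r)<p'<p$ with $(\inf_i a_i,p')$ off the diagonal.

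I expect this limit exchange under (S) to be the main obstacle, exactly as in Proposition \ref{right branch way below}. Once it is done, $P(f,\operatorname{colim}\phi)\le\phi(\mathcal E^\ell_{p,r})$ for every $\phi\in\mathcal I\tilde{\mathcal F}$, hence $\mathfrak w(\mathcal E^\ell_{p,r},f)=1$.
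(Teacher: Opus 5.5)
Your Step 1 agrees with the paper: $\phi(\mathcal E^\ell_{p,r})=\sup_i(a_i\rightarrow p)$ with $a_i=m_i(r^-)=\sup_{j\ge i}g_j(r^-)$. Your Step 3 is also sound. If $a:=\inf_i a_i<p$, some $a_i<p$ and the supremum is $1$. Otherwise $a\ge p>f(r)$, and continuity of $\rightarrow$ at an off-diagonal point gives $a\rightarrow f(r)\le\sup_i(a_i\rightarrow p)$.

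The gap is in Step 2. It never produces the bound $P(f,g)\le a\rightarrow f(r)$ that Step 3 takes as input. At any point $y<r$ you only get $P(f,g)\le g(y)\rightarrow f(y)$, with $g(y)=\inf_i m_i(y)\le\inf_i m_i(r^-)=a$. Antitonicity of $\rightarrow$ in the first variable then gives $g(y)\rightarrow f(y)\ge a\rightarrow f(y)$. This is exactly the wrong direction you had already diagnosed for $r_n\nearrow r$. Passing to a single point $y_0<r$ changes nothing, so your assertion that antitonicity ``goes the right way'' there is false.

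In fact no point to the left of $r$ can work. For $r\in(-M,M)$, take $g_j=\chi_{[r-1/j,\infty)}$ with $j$ large. This net is increasing in the underlying order, $m_i=g_i$, every $a_i=1$, and $\phi(\mathcal E^\ell_{p,r})=p$. Its colimit $g=\operatorname{colim}\phi=\chi_{[r,\infty)}$ vanishes on $(-\infty,r)$. So each bound $g(y)\rightarrow f(y)$ with $y<r$ equals $1$, even though $P(f,g)=f(r)<p$.

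The repair, which is what the paper does, is to evaluate at $r$ itself. Points at or to the right of $r$ are where the right-continuous $g$ dominates $a$. Concretely, $P(f,g)\le g(r)\rightarrow f(r)$, and
$g(r)=\inf_i\widetilde{\sup_{j\ge i}g_j}(r)\ge\inf_i\sup_{j\ge i}g_j(r^-)=a$.
Hence $P(f,g)\le a\rightarrow f(r)$, now with the correct direction. No continuity points of $g$ and no exchange of limits are needed. With this bound in place, your Step 3 completes the proof.
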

\begin{proof}

    Suppose that \(f(r)<p \).
	Let \(g\) be the Yoneda limit of a forward Cauchy net \(\{g_i\}_{i \in D}\) in \(\tilde{\mathcal{F}}\) with \(\phi =\sup_{i\in D} \inf_{j\ge i} P(-,g_j )\). \begin{align*}
		\phi (\mathcal{E}^\ell_{p,r})=& \sup_{i\in D} \inf_{j\ge i} P(\mathcal{E}^\ell_{p,r},g_j )\\
		=&\sup_{i\in D}P(\mathcal{E}^\ell_{p,r}, \sup_{j\ge i}g_j )\\
		=&\sup_{i\in D}\left\{\inf_{x\in [-M,M]} ( \sup_{j\ge i}g_j (x)\rightarrow \mathcal{E}^\ell_{p,r}(x) )\right\}\\
		=&\sup_{i\in D}( \sup_{j\ge i}g_j (r^-) \rightarrow p ).\tag{\(g_j(r^-)\) is the left limit of \(g_j\) at \(x=r\)}
	\end{align*}
	
	By Theorem \ref{Yoneda limit of P} and \(\inf_{i\in D}\sup_{j\ge i}g_j(r^-)\le g(r)\), it follows that \[P(f,g)\le  g(r) \rightarrow f(r)=\left(\inf_{i\in D} \widetilde{\sup_{j\ge i}g_j}(r)\right)\rightarrow f(r)\le \left(\inf_{i\in D}\sup_{j\ge i}g_j(r^-)\right)\rightarrow f(r).\]  

    \begin{enumerate}[label={\rm(\arabic*)}]
    \item Assume \(\displaystyle\inf_{i \in D} \sup_{j \ge i} g_j (r^-) > p\). Since  \(\&\) satisfies the (S) condition and \(f(r) < p\), it follows that
    \[
    \sup_{i \in D} \Bigl( \sup_{j \ge i} g_j (r^-) \to p \Bigl) 
    = \Bigl( \inf_{i \in D} \sup_{j \ge i} g_j (r^-) \Bigr) \to p 
    \ge \Bigl( \inf_{i \in D} \sup_{j \ge i} g_j(r^-) \Bigr) \to f(r).
    \]
    Consequently, \(P(f, g) \le \phi(\mathcal{E}^\ell_{p, r})\) and thus \(\mathfrak{w}(\mathcal{E}^\ell_{p, r}, f) = 1\).

    \item Assume \(\displaystyle\inf_{i \in D} \sup_{j \ge i} g_j (r^-) = p\). Using condition (S) for \(\&\) and the fact that \(f(r) < p\), we obtain
    \[
    \Bigl( \inf_{i \in D} \sup_{j \ge i} g_j(r^-) \Bigr) \to f(r)
    = \sup_{i \in D} \Bigl( \sup_{j \ge i} g_j (r^-) \to f(r) \Bigl)
    \le \sup_{i \in D} \Bigl( \sup_{j \ge i} g_j (r^-) \to p \Bigl).
    \]
    Hence, \(P(f, g) \le \phi(\mathcal{E}^\ell_{p, r})\) and therefore \(\mathfrak{w}(\mathcal{E}^\ell_{p, r}, f) = 1\).

    \item Assume \(\displaystyle\inf_{i \in D} \sup_{j \ge i} g_j (r^-) < p\). Then
    \[
    \sup_{i \in D} \Bigl( \sup_{j \ge i} g_j (r^-) \to p \Bigl) = 1,
    \]
    which immediately implies \(\mathfrak{w}(\mathcal{E}^\ell_{p, r}, f) = 1\).
\end{enumerate}
In summary, in all three cases we have shown that \(\mathfrak{w}(\mathcal{E}^\ell_{p, r}, f) = 1\).

\end{proof}

\begin{cor}
	We define:
    \[
   D_f^\ell=
\operatorname{FinJoin}\left(
\left\{\mathcal E^\ell_{p,r}
\mid f(r)<p<1,\ r\text{ is a continuity point of }f\right\}
\right).
    \]
	\begin{align*}
		\phi_f(g)=\begin{cases}
			P(g,f), & f= \operatorname{lim} (\sup_{g\in\tilde{\mathcal{F}}}P(g,-)); \\
			\sup_{h\in D^\ell_f} P(g,h), & otherwise.
		\end{cases} 
	\end{align*}
	Then \( \phi_f \) is a forward Cauchy ideal and \(\operatorname{colim}\phi_f=f\).
\end{cor}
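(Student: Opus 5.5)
We mirror the proof of Proposition~\ref{fuzzy forward Cauchy ideal}, now in the left-branch space \((\tilde{\mathcal F},P)\), whose underlying order is \((\tilde{\mathcal F},\ge)\). Here \(P(f,g)=1\) iff \(g\le f\) pointwise, so joins in \((\tilde{\mathcal F})_0\) are computed as pointwise infima. The degenerate case, where \(f\) is the top element \(\operatorname{lim}(\sup_{g}P(g,-))\), is handled directly. In that case \(\phi_f=P(-,f)\) is representable, hence is the forward Cauchy ideal of the constant net at \(f\). Its colimit is \(f\) by the Yoneda lemma.

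In the remaining case we carry out three steps.

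\begin{enumerate}[label={\rm(\arabic*)}]
\item \emph{Directedness.} \(D_f^\ell\) is nonempty and directed in \((\tilde{\mathcal F})_0\). It is nonempty because \(f\) is not the top element \(\chi_{[-M,\infty)}\). Hence there is a continuity point \(r\) of \(f\) with \(f(r)<1\); such points are dense, since \(f\) is monotone. Choosing \(p\in(f(r),1)\) gives a generator \(\mathcal E^\ell_{p,r}\). Directedness holds because the union of two finite families of generators is again a finite family, and its join in \((\tilde{\mathcal F},\ge)\) lies in \(D_f^\ell\) and bounds both.

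\item \emph{Forward Cauchy ideal.} Regard \(D_f^\ell\) as a net indexed by itself. Then \(\phi_f=\sup_{h\in D_f^\ell}P(-,h)\). Because the net is monotone in the underlying order, we have \(\inf_{h'\ge h}P(-,h')=P(-,h)\), and \(P(h,h')=1\) whenever \(h\sqsubseteq h'\). So the net is forward Cauchy, and \(\phi_f\) is the associated forward Cauchy ideal.

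\item \emph{Colimit.} For a monotone net, the conical colimit of Proposition~\ref{conically cocomplete and conically complete in E}, transported to \(\tilde{\mathcal F}\) via the corollary preceding Corollary~\ref{crisp order in left branch and left branch way below}, is its join in the underlying order. Hence \(\operatorname{colim}\phi_f=\bigvee D_f^\ell\), computed pointwise as \(\inf\{h(x):h\in D_f^\ell\}\), and it remains to check that this infimum equals \(f\).
\begin{itemize}
\item \(\bigvee D_f^\ell\ge f\) pointwise: each generator satisfies \(\mathcal E^\ell_{p,r}\ge f\) by Corollary~\ref{crisp order in left branch and left branch way below}. Indeed, for \(x<r\) we have \(f(x)\le f(r)<p\), and for \(x\ge r\) the generator equals \(1\).
\item \(\bigvee D_f^\ell\le f\) pointwise: fix \(x\in[-M,M)\) with \(f(x)<1\) and \(\varepsilon>0\). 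By right continuity of \(f\), and density of its continuity points, choose a continuity point \(r\in(x,x+\delta)\) with \(f(r)<f(x)+\varepsilon\). Then choose \(p\in(f(r),f(x)+\varepsilon)\), so \(\mathcal E^\ell_{p,r}(x)=p<f(x)+\varepsilon\). Letting \(\varepsilon\to0\) gives the inequality at \(x\). Points where \(f(x)=1\), and points outside \([-M,M]\), are trivial.
\end{itemize}
\end{enumerate}

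Finally, upper-semicontinuity of the resulting infimum is automatic: a pointwise infimum of upper-semicontinuous functions is upper-semicontinuous, so no modification is needed. This matches Theorem~\ref{continuous point of super and infimum}.

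The main obstacle is the approximation inequality \(\bigvee D_f^\ell\le f\) at points near discontinuities of \(f\). It requires approaching \(x\) from the right through continuity points and using right continuity of the non-decreasing upper-semicontinuous \(f\). The point \(x=-M\) needs the convention \(\mathcal E^\ell_{p,r}(-M)=p\), which is consistent with the above.
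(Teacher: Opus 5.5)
Your proof is correct and follows essentially the paper's route. The paper just transfers the argument of Proposition~\ref{fuzzy forward Cauchy ideal}: finite joins form a directed set, the monotone net gives the conical forward Cauchy ideal $\sup_{h\in D_f^\ell}P(-,h)$, and its colimit is the join $\bigvee D_f^\ell=f$ in the underlying order. Your pointwise check via right-continuity and density of continuity points supplies the equality $\bigvee D_f^\ell=f$, which the paper only asserts, and you rightly omit the way-below property, which the statement does not need.

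Two small fixes. First, $\chi_{[-M,\infty)}=\operatorname{lim}(\sup_{g}P(g,-))$ is the least element of $(\tilde{\mathcal F})_0$ (the pointwise largest function), not the top. Second, in the approximation step you should take $\varepsilon<1-f(x)$ so that $p$ can be chosen below $1$.
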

\begin{proof}
By an argument similar to that of  Corollary \ref{fuzzy forward Cauchy ideal}.
\end{proof}
\begin{thm}\label{F is a real enriched domain}
	Let \( \& \) be a continuous t-norm satisfying the (S) condition. \((\tilde{\mathcal{F}},P)\) is a \([0,1]\)-enriched domain, i.e., 
	\[
	\mathfrak{w}(g, f) = \inf_{\phi \in \mathcal{I}\tilde{\mathcal{F}}} \left( P(f, \operatorname{colim} \phi) \to \phi(g) \right)=\phi_f(g).
	\]
\end{thm}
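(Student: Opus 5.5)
The proof will mirror that of Theorem~\ref{G is a real enriched domain}, with the roles of the branches reversed. The corollary preceding the statement already provides the forward Cauchy ideal $\phi_f$ with $\operatorname{colim}\phi_f=f$, and Theorem~\ref{Yoneda limit of P} (applied to the left-branch space) gives Yoneda completeness. By Lemma~\ref{way-below}, it therefore suffices to show $\mathfrak w(-,f)=\phi_f$. The degenerate case $f=\operatorname{lim}(\sup_{g\in\tilde{\mathcal F}}P(g,-))$, where $f$ is the bottom element of $\tilde{\mathcal F}$, is handled directly. Here $\phi_f=P(-,f)$, and $\mathfrak w(g,f)=P(g,f)$ follows because every forward Cauchy ideal $\phi$ satisfies $\phi(g)\ge P(g,f)$.

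For the main case, I first establish $\mathfrak w(h,f)=1$ for every $h\in D^\ell_f$. For generators $h=\mathcal E^\ell_{p,r}$ with $f(r)<p<1$ and $r$ a continuity point of $f$, this is exactly Corollary~\ref{left branch way below}. For finite joins in $(\tilde{\mathcal F},\ge)$, note that $(\tilde{\mathcal F},P)$ is conically cocomplete by the preceding corollary. Lemma~\ref{finite join of enriched way below} then gives $\mathfrak w(h,f)=1$.

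Next, fix an arbitrary forward Cauchy ideal $\phi\in\mathcal I\tilde{\mathcal F}$. From $\mathfrak w(h,f)=1$ I obtain $P(f,\operatorname{colim}\phi)\le\phi(h)$ for all $h\in D^\ell_f$. Since $\phi$ is a weight, $\phi(h)\le P(g,h)\to\phi(g)$ for every $g$. I then run the chain of equivalences from the proof of Theorem~\ref{G is a real enriched domain}: interchange the infima, use Proposition~\ref{properties of residual implication}(6) to convert $\inf_h(P(g,h)\to\phi(g))$ into $(\sup_h P(g,h))\to\phi(g)$, and apply the adjunction (2). This yields
\[
\phi_f(g)=\sup_{h\in D^\ell_f}P(g,h)\le P(f,\operatorname{colim}\phi)\to\phi(g).
\]
Taking the infimum over $\phi$ gives $\phi_f\le\mathfrak w(-,f)$.

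For the reverse inequality $\mathfrak w(g,f)\le\phi_f(g)$, I simply instantiate the infimum defining $\mathfrak w$ at $\phi=\phi_f$. This is legitimate because $\phi_f\in\mathcal I\tilde{\mathcal F}$ and $\operatorname{colim}\phi_f=f$, so
\[
\mathfrak w(g,f)\le P(f,f)\to\phi_f(g)=1\to\phi_f(g)=\phi_f(g).
\]
Combining the two inequalities gives $\mathfrak w(-,f)=\phi_f\in\mathcal I\tilde{\mathcal F}$ with colimit $f$, so $(\tilde{\mathcal F},P)$ is continuous by Lemma~\ref{way-below}, and hence a $[0,1]$-enriched domain.

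The main obstacle is the step $\mathfrak w(h,f)=1$ for $h\in D^\ell_f$, in particular checking that the finite-join lemma applies with the reversed underlying order of $\tilde{\mathcal F}$. Joins in $(\tilde{\mathcal F},\ge)$ are pointwise infima, so the conical-join identity $P(\bigvee_k x_k,z)=\bigwedge_k P(x_k,z)$ must be checked using Proposition~\ref{properties of residual implication}(7) applied to $z(x)\to\min_k x_k(x)$. It is this identity that lets Lemma~\ref{finite join of enriched way below} carry the result for generators up to all of $D^\ell_f$.
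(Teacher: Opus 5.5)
Your proposal is correct and is essentially the paper's proof, which transfers the argument of Theorem~\ref{G is a real enriched domain} to $\tilde{\mathcal F}$ using $D^\ell_f$, Corollary~\ref{left branch way below} and Lemma~\ref{finite join of enriched way below}. Your additions (the instantiation at $\phi=\phi_f$, the appeal to Lemma~\ref{way-below}, and the check that finite joins in $(\tilde{\mathcal F},\ge)$ are pointwise minima) only make explicit what the paper leaves implicit; note also that the identity $P(\bigvee_k x_k,z)=\bigwedge_k P(x_k,z)$, which you flag as the main obstacle, holds automatically for any conical colimit by its defining property.
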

\begin{proof}
By an argument similar to that of Theorem \ref{G is a real enriched domain}.
\end{proof}

\begin{prop}\label{underlying order is continuous lattice}
Let $\&$ be a continuous t-norm. Then the underlying ordered set
$(\mathbb{E}^1_{[-M,M]})_0=(\mathbb{E}^1_{[-M,M]},\sqsubseteq)$ is a continuous
lattice, where
\[
u\sqsubseteq v\ \Longleftrightarrow\ P(u,v)=1
\ \Longleftrightarrow\ v^\ell\le u^\ell\ \ \text{and}\ \ u^r\le v^r.
\]
\end{prop}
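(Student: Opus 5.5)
We will show that $(\mathbb{E}^1_{[-M,M]})_0$ is a complete lattice in which every element is the directed join of elements way below it. Completeness is Corollary \ref{complete lattice}. The approach is to view $(\mathbb{E}^1_{[-M,M]})_0$ as a subposet of the product $(\tilde{\mathcal F},\ge)\times(\tilde{\mathcal G},\le)$ via $u\mapsto(u^\ell,u^r)$. Both factors are continuous lattices by Proposition \ref{way-below of crisp order} and Corollary \ref{crisp order in left branch and left branch way below}, so the product is a continuous lattice. Proposition \ref{fuzzy number u(x) re} identifies the image with the pairs satisfying the covering condition $f\vee g\equiv1$.

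\textbf{Step 1: joins are computed as in the product.} By Proposition \ref{conically cocomplete and conically complete in E}, the join of a family $\{u_i\}$ in $(\mathbb{E}^1_{[-M,M]})_0$ is
\[
\Bigl(\inf_i u_i^\ell,\ \widetilde{\sup_i u_i^r}\Bigr).
\]
This is exactly the join of $\{(u_i^\ell,u_i^r)\}$ in the product of the two branch lattices. So the embedding preserves all nonempty joins, and in particular directed joins.

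\textbf{Step 2: way-below elements.} For $u\in\mathbb{E}^1_{[-M,M]}$, we use as approximants the fuzzy numbers
\[
w=\mathcal E^\ell_{p_2,r_2}\wedge\mathcal E^r_{p_1,r_1},
\]
as in Figure \ref{crip order in function space}. Here $r_1,r_2$ are continuity points of $u$ with $r_2\le r_1$, $u^\ell(r_2)<p_2$ and $p_1<u^r(r_1)$. More precisely, $w$ has left branch $\mathcal E^\ell_{p_2,r_2}$ and right branch $\mathcal E^r_{p_1,r_1}$, modified to be $1$ where necessary so that the covering condition holds and $w$ is a fuzzy number. We will check $w\ll u$ directly. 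Take a directed $\{v_i\}$ with $u\sqsubseteq\bigvee v_i$. By Step 1 the branches satisfy $\inf_i v_i^\ell\le u^\ell$ and $u^r\le\widetilde{\sup_i v_i^r}$. We then repeat the two arguments in the proofs of Proposition \ref{way-below of crisp order} and Corollary \ref{crisp order in left branch and left branch way below}, which use density of continuity points and Theorem \ref{continuous point of super and infimum}. This yields indices $i_1$ with $\mathcal E^r_{p_1,r_1}\le v_{i_1}^r$ and $i_2$ with $v_{i_2}^\ell\le\mathcal E^\ell_{p_2,r_2}$. Directedness gives a common index $i$ satisfying both, hence $w\sqsubseteq v_i$. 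The modification of $w$ only sets branches to $1$ on the region where the other branch is below $1$. On that region the covering condition for $v_i$ should force the needed inequality.

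\textbf{Step 3: approximation.} We show that the join of all such $w$ equals $u$. Pointwise on continuity points, the infimum of the admissible $\mathcal E^\ell_{p_2,r_2}$ is $u^\ell$, and the upper-semicontinuous modification of the supremum of the admissible $\mathcal E^r_{p_1,r_1}$ is $u^r$. This is the density argument already used for the two branch spaces. The set $\{v:v\ll u\}$ is then directed because finite joins of way-below elements in a complete lattice are way below. Hence $u=\sup\{v:v\ll u\}$ as a directed join, and $(\mathbb{E}^1_{[-M,M]})_0$ is a continuous lattice.

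The main obstacle is the interaction between the two branches through the covering condition $w^\ell\vee w^r=1$. A pair of individually way-below branch approximants need not form a fuzzy number, and the required modification must be shown not to destroy the way-below property. This is also why the restriction $r_2\le r_1$ is needed. If the direct construction becomes messy, the fallback is to prove that $\mathbb{E}^1_{[-M,M]}$ is a sub-lattice of the product closed under directed joins and under arbitrary meets. Meets are given by the limit formula in Proposition \ref{conically cocomplete and conically complete in E}, which again agrees with the product meet. The standard result that such a retract-type subset of a continuous lattice is continuous (Gierz et al.) then applies.
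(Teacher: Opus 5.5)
Your main line is the paper's own proof. The paper's approximants $D_u^0$ are your $w$: left branch $\mathcal E^\ell_{p_2,r_2}$, and right branch $\mathcal E^r_{p_1,r_1}$ raised to $1$ on $(-\infty,r_2]$. The way-below check picks one index per branch and merges them by directedness, and the approximation step is the same density argument.

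Two details need fixing.
\begin{itemize}
\item Only the right branch may be raised. If you raised the left branch instead, it would be pushed up to $\mathcal E^\ell_{1,-M}$, and Step~3 would no longer recover $u^\ell$. With the right-branch choice, your covering argument works on $(-\infty,r_2)$: there $v_i^\ell\le w^\ell\le p_2<1$ forces $v_i^r=1$. At the endpoint $r_2$ itself you need the left-continuity of $v_i^r$, or, as the paper does, an index with $v_i^\ell(r_2)<p_2$. The restriction $r_2\le r_1$ plays no role in this step.
\item If $u_1^-=-M$ and $u\neq\chi_{-M}$, then $u^\ell=1$ on $[-M,M]$, so no $r_2$ satisfies $u^\ell(r_2)<p_2\le 1$. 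Your family of approximants is then empty, and its join is $\chi_{-M}\neq u$. You must add the approximants with $w^\ell=\mathcal E^\ell_{1,-M}$, exactly as the paper does for this case.
\end{itemize}

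Your fallback is a genuinely different route, and it is complete as stated. Write $S$ for the pairs $(f,g)$ in $(\widetilde{\mathcal F},\ge)\times(\widetilde{\mathcal G},\le)$ with $f\vee g\equiv 1$.
\begin{itemize}
\item For a nonempty family, the product meet $(\widetilde{\sup_i f_i},\inf_i g_i)$ stays in $S$: if $\inf_i g_i(x)<1$, then some $f_i(x)=1$.
\item The product join $(\inf_i f_i,\widetilde{\sup_i g_i})$ stays in $S$ by the symmetric argument.
\item The empty meet and empty join are $\chi_M$ and $\chi_{-M}$, which also lie in $S$.
\end{itemize}
So the standard fact applies: a subset of a continuous lattice closed under arbitrary infima and directed suprema is a continuous lattice. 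This needs no case distinctions and no covering bookkeeping.

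It also explains your modification. Your $w$ is the least element of $S$ above the pair $(\mathcal E^\ell_{p_2,r_2},\mathcal E^r_{p_1,r_1})$, which is way below $u$ in the product. Such a closure is way below $u$ in $S$, because directed joins in $S$ agree with those in the product.

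What the paper's explicit route buys is a concrete family $D_u^0$ of way-below approximants. That family is reused later: in showing $\mathfrak w(b,u)=1$ for $b\in D_u^0$ under condition (S), and in Step~1 of the necessity proof.
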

\begin{proof}
Recall that $(\mathbb{E}^1_{[-M,M]})_0$ is a complete lattice, and for every
$A\subseteq\mathbb{E}^1_{[-M,M]}$ the join satisfies (Theorem
\ref{continuous point of super and infimum})
\[
\Big(\bigvee A\Big)^\ell=\inf_{u\in A}u^\ell,\qquad
\Big(\bigvee A\Big)^r=\widetilde{\,\sup_{u\in A}u^r\,}.
\]
For \(u\in\mathbb E^1_{[-M,M]}\), define \(D_u^0\) as follows.

If \(u_1^->-M\), let
\[
D_u^0=
\left\{
w:
\begin{array}{l}
w^\ell=\mathcal E^\ell_{p_2,r_2},\quad
w^r=\mathcal E^r_{p_1,r_1}, \mathcal E^r_{p_1,r_1}\in \tilde{\mathcal{G}}_{[r_2,M]},\\
u^\ell(r_2)<p_2<1,\quad
0< p_1<u^r(r_1),\\
r_2<r_1,
\end{array}
\right\},
\]
where \(\tilde{\mathcal{G}}_{[r_2,M]}\) denotes the restriction of \(\tilde{\mathcal{G}}\) to the interval \([r_2,M]\) and  \(r_2,r_1\) are continuity points of \(u^\ell,u^r\),
respectively.

If \(u_1^-=-M\), then \(u^\ell=\mathcal E^\ell_{1,-M}\), and we
define \(D_u^0\) by the right-branch condition only:
\[
D_u^0=
\left\{
w:
w^\ell=\mathcal E^\ell_{1,-M},\quad
w^r=\mathcal E^r_{p_1,r_1},\quad
0< p_1<u^r(r_1)
\right\}.
\]

If \(u_1^-=-M\), then all the left branches occurring in
\(D_u^0\) are equal to \(\mathcal E^\ell_{1,-M}\). Hence the
way-below relation and the approximation of \(u\) reduce to the
right-branch case established in Proposition
\ref{way-below of crisp order}. We may therefore assume in the
sequel that \(u_1^->-M\). Fix $w\in D_u^0$, and let $A=\{u_i\}_{i\in D}$ be directed with $u\sqsubseteq e:=\bigvee A$.
Then
\[
e^\ell=\inf_{i\in D}u_i^\ell\le u^\ell,\qquad u^r\le e^r=\widetilde{\,\sup_{i\in D}u_i^r\,}.
\]
For the left branch, since $u^\ell(r_2)<p_2$ and $e^\ell=\inf_i u_i^\ell\le u^\ell$,
\[
\inf_{i\in D}u_i^\ell(r_2)=e^\ell(r_2)\le u^\ell(r_2)<p_2,
\]
so there is $i_\ell\in D$ with $u_{i_\ell}^\ell(r_2)<p_2$. As $u_{i_\ell}^\ell$ is non-decreasing,
\[
x<r_2\ \Rightarrow\ u_{i_\ell}^\ell(x)\le u_{i_\ell}^\ell(r_2)<p_2=w^\ell(x),
\]
while $w^\ell(x)=1\ge u_{i_\ell}^\ell(x)$ for $x\ge r_2$ and $w^\ell(x)=0=u_{i_\ell}^\ell(x)$
for $x<-M$; hence
\begin{equation}\label{eq:L}
u_{i_\ell}^\ell\le w^\ell.
\end{equation}
For the right branch, the index \(i_\ell\) chosen above also
provides the required value on \((-\infty,r_2]\). Indeed,
\(u_{i_\ell}^\ell(r_2)<p_2<1\) implies \(u_{i_\ell}^\ell(r_2)<1\).
Since \(u_{i_\ell}^\ell(r_2)\vee u_{i_\ell}^r(r_2)=1\), we have
\(u_{i_\ell}^r(r_2)=1\). By the monotonicity of \(u_{i_\ell}^r\),
\begin{equation}\label{eq:R1}
u_{i_\ell}^r(x)=1\qquad (x\le r_2).
\end{equation}

Since \(u^r\) is continuous at \(r_1\) and \(p_1<u^r(r_1)\),
choose \(x^*>r_1\), sufficiently close to \(r_1\), such that
\(u^r(x^*)>p_1\) and \(x^*\) is a continuity point of \(e^r\).
Then \(e^r(x^*)=\sup_{i\in D}u_i^r(x^*)\ge u^r(x^*)>p_1\).
Hence there exists \(i_r'\in D\) such that \(u_{i_r'}^r(x^*)>p_1\).
Since \(u_{i_r'}^r\) is non-increasing,
\begin{equation}\label{eq:R2}
u_{i_r'}^r(x)>p_1=w^r(x)
\qquad (r_2<x\le r_1).
\end{equation}

Choose \(i_r\in D\) with \(i_r\ge i_\ell,i_r'\). From
\eqref{eq:R1}, \eqref{eq:R2}, and \(w^r(x)=0\) for \(x>r_1\),
we obtain \(w^r\le u_{i_r}^r\). Moreover, since \(i_r\ge i_\ell\),
equation \eqref{eq:L} gives \(u_{i_r}^\ell\le u_{i_\ell}^\ell\le w^\ell\).
Thus \(w\sqsubseteq u_{i_r}\), and consequently \(w\ll_0u\).

It remains to prove that \(\bigvee D_u^0=u\). The case
\(u=\chi_{-M}\) is immediate. Assume \(u\ne\chi_{-M}\).
By the definition of \(D_u^0\), every \(w\in D_u^0\) satisfies
\(
u^\ell\le w^\ell, w^r\le u^r,
\)
and hence \(w\sqsubseteq u\).
Conversely, by varying \(r_2,r_1\) over the continuity points of
\(u^\ell,u^r\), respectively, and letting
\(p_2\searrow u^\ell(r_2)\) and
\(p_1\nearrow u^r(r_1)\), one directly verifies that
\[
\inf_{w\in D_u^0}w^\ell=u^\ell,
\qquad
\widetilde{\sup_{w\in D_u^0}w^r}=u^r.
\]
Here the second equality uses the density of the continuity points
of \(u^r\) and the left-continuity of \(u^r\). If \(u_1^-=-M\),
the first equality is automatic and the same right-branch argument
applies. Therefore, by the formula for joins in
\((\mathbb E^1_{[-M,M]})_0\),
\[
\left(\bigvee D_u^0\right)^\ell=u^\ell,
\qquad
\left(\bigvee D_u^0\right)^r=u^r.
\]
Hence \(\bigvee D_u^0=u\).

Finally, the set $\twoheaddownarrow u:=\{v:v\ll_0 u\}$ is directed, since in a complete lattice
$v_1,v_2\ll_0 u$ implies $v_1\vee v_2\ll_0 u$. As $D_u^0\subseteq\twoheaddownarrow u$ and
$u=\bigvee D_u^0$, we have
\[
u=\bigvee D_u^0\ \sqsubseteq\ \bigvee\twoheaddownarrow u\ \sqsubseteq\ u,
\]
i.e. $u=\bigvee\twoheaddownarrow u$ with $\twoheaddownarrow u$ directed. Since $u$ is arbitrary, every
element is the directed supremum of elements way below it, so $(\mathbb{E}^1_{[-M,M]})_0$
is a continuous lattice.
\end{proof}
For \(u\ne\chi_{-M}\), put
\[
D_u=\operatorname{FinJoin}(D_u^0)
=
\left\{
\bigvee_{k=1}^n b_k
\ \middle|\
n\ge1,\ b_k\in D_u^0
\right\},
\]
where the joins are taken in
\((\mathbb E^1_{[-M,M]})_0\).

\begin{cor}
	Let 
	\begin{align*}
		\Phi_u(v)=\begin{cases}
			P(v,u), & u= \operatorname{lim} (\sup_{e\in\mathbb{E}^1_{[-M,M]}} P(e,-) ); \\
			\sup_{w\in D_u}  P(v,w), & otherwise.
		\end{cases} 
	\end{align*}
	Then \(\Phi_u\) is a forward Cauchy ideal and  \(\operatorname{colim}\Phi_u=u \).
\end{cor}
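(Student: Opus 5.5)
The plan follows the proof of Proposition~\ref{fuzzy forward Cauchy ideal}, with $\widetilde{\mathcal G}$ replaced by $\mathbb E^1_{[-M,M]}$ and the crisp order $\sqsubseteq$. We split according to the two cases in the definition of $\Phi_u$.

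\emph{First case.} Here $u=\operatorname{lim}(\sup_{e}P(e,-))$, which is the limit of the conical coweight generated by the whole space, i.e.\ the bottom element $\chi_{-M}$ of $(\mathbb E^1_{[-M,M]})_0$. Then $\Phi_u=P(-,u)$ is the weight of the constant net at $u$. A constant net is trivially forward Cauchy, so $\Phi_u\in\mathcal I\mathbb E^1_{[-M,M]}$. Its colimit is $u$ by the Yoneda lemma, since $\mathcal P X(P(-,u),P(-,w))=P(u,w)$.

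\emph{Second case} ($u\neq\chi_{-M}$). The argument has three steps.

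First, we show that $D_u$ is nonempty and directed. It is nonempty because $D_u^0\neq\emptyset$: choose continuity points $r_2<r_1$ of $u^\ell$ and $u^r$ with $u^\ell(r_2)<1$ and $u^r(r_1)>0$, which is possible since $u\neq\chi_{-M}$. It is directed because the join of two finite joins of members of $D_u^0$ is again such a finite join.

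Second, we note that every $w\in D_u$ satisfies $w\ll_0 u$. The proof of Proposition~\ref{underlying order is continuous lattice} gives $D_u^0\subseteq\{v:v\ll_0u\}$, and this set is closed under finite joins in a complete lattice. Also $\bigvee D_u=\bigvee D_u^0=u$ by the same proposition.

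Third, we produce the net. Index the directed set $D_u$ by itself, ordered by $\sqsubseteq$, and take the net $x_h=h$. Since the net is monotone, $\inf_{k\ge j\ge i}P(x_j,x_k)=1$ for every $i$, so it is forward Cauchy. Its associated ideal is
\[
\sup_{h\in D_u}\inf_{h'\sqsupseteq h}P(-,h').
\]
Because $P(-,\cdot)$ is monotone in the second variable, the inner infimum equals $P(-,h)$. Hence the ideal is exactly $\sup_{w\in D_u}P(-,w)=\Phi_u$, so $\Phi_u$ is a forward Cauchy ideal.

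For the colimit, $\Phi_u$ is a conical weight. By Proposition~\ref{conically cocomplete and conically complete in E} its colimit is the element $s$ whose branches are $s^\ell=\inf_{w\in D_u}w^\ell$ and $s^r=\widetilde{\sup_{w\in D_u}w^r}$, and by Corollary~\ref{complete lattice} this $s$ is the join $\bigvee D_u$ in $(\mathbb E^1_{[-M,M]})_0$. Combining with the second step, $\operatorname{colim}\Phi_u=\bigvee D_u=u$.

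The main obstacle is justifying $\bigvee D_u^0=u$, which rests on the approximation argument at the end of Proposition~\ref{underlying order is continuous lattice}. Specifically, $\inf_w w^\ell=u^\ell$ uses right-continuity of $u^\ell$ together with the density of its continuity points. The right-branch identity uses left-continuity of $u^r$ and the upper-semicontinuous modification, and requires $r_1$ to range over continuity points with $r_2<r_1$. I would cite that proposition rather than re-derive these identities. I would also check that the degenerate subcase $u_1^-=-M$ still yields a nonempty $D_u^0$.
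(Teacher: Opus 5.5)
Your proposal is correct and follows essentially the same route as the paper: the constant net at the bottom element $\chi_{-M}$ handles the first case. In the second case, the directed set $D_u=\operatorname{FinJoin}(D_u^0)$ is read as an increasing, hence forward Cauchy, net whose ideal is $\sup_{w\in D_u}P(-,w)=\Phi_u$ and whose colimit is $\bigvee D_u=\bigvee D_u^0=u$. Your separate nonemptiness check, including the subcase $u_1^-=-M$ you flagged, is harmless but in fact automatic: $D_u^0=\emptyset$ would give $\bigvee D_u^0=\chi_{-M}\neq u$.
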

\begin{proof}
If \(u=\chi_{-M}\), then \(\Phi_u=P(-,u)\) is generated by the
constant net at \(u\), and its colimit is \(u\).

Assume that \(u\ne\chi_{-M}\). By Proposition
\ref{underlying order is continuous lattice},
\(D_u^0\subseteq\{w:w\ll_0u\}\) and \(\bigvee D_u^0=u\).
The set \(D_u=\operatorname{FinJoin}(D_u^0)\) is directed,
because the join of two of its members is again a finite join
of elements of \(D_u^0\). Moreover, \(\bigvee D_u=u\).
Regarded as a net indexed by its inherited order, \(D_u\) is an
increasing, and hence forward Cauchy, net. Its associated
forward Cauchy ideal is \(\sup_{w\in D_u}P(-,w)=\Phi_u\).
Since the colimit of this conical weight is \(\bigvee D_u\), it
follows that \(\operatorname{colim}\Phi_u=\bigvee D_u=u\).
\end{proof}

\begin{thm} \label{Theorem (S) condition imply a real enriched domain}
	Let \( \& \) be a continuous t-norm satisfying the  (S) condition. \((\mathbb{E}^1_{[-M,M]},P)\) is a \([0,1]\)-enriched domain, i.e.,
	\[
	\mathfrak{w}(v, u) = \inf_{\Phi \in \mathcal{I}\mathbb{E}^1_{[-M,M]}} \left( P(u, \operatorname{colim} \Phi) \to \Phi(v) \right)=\Phi_u(v).
	\]
\end{thm}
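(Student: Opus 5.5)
We follow the template of Theorem~\ref{G is a real enriched domain}, now on the product-like space $\mathbb{E}^1_{[-M,M]}$. Yoneda completeness is Theorem~\ref{Yoneda limit of P}. By Lemma~\ref{way-below}, it therefore suffices to show that for each $u$ the weight $\mathfrak{w}(-,u)$ equals $\Phi_u$. The preceding corollary already shows that $\Phi_u$ is a forward Cauchy ideal with colimit $u$. The degenerate case $u=\chi_{-M}$ (the bottom element, where $\Phi_u=P(-,u)$) is checked directly: $\mathfrak w(v,u)\le P(u,u)\to \phi(v)$ evaluated at $\phi=P(-,u)$ gives $\le P(v,u)$, and the reverse uses that $u$ is least. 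So we assume $u\neq\chi_{-M}$.

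\textbf{Step 1 (enriched way-below of generators).} We show $\mathfrak{w}(w,u)=1$ for every $w\in D_u^0$. For a forward Cauchy ideal $\Phi=\sup_i\inf_{j\ge i}P(-,u_j)$ with colimit $e$, we use Proposition~\ref{conically cocomplete and conically complete in E} to write $\inf_{j\ge i}P(-,u_j)=P(-,m_i)$, where $m_i^\ell=\widetilde{\sup_{j\ge i}u_j^\ell}$ and $m_i^r=\inf_{j\ge i}u_j^r$. Then $\Phi(w)=\sup_i P(w,m_i)$, and $P(w,m_i)$ splits as a meet of a left-branch term and a right-branch term. Because $w^\ell=\mathcal E^\ell_{p_2,r_2}$ and $w^r=\mathcal E^r_{p_1,r_1}$ are step functions, each term reduces to finitely many point evaluations, namely at $r_2^-$ and at $r_1$, together with the region $x\le r_2$ where $w^r=1$.

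We then bound $P(u,e)$ from above by the same point quantities. For the left branch we repeat the three-case (S)-argument of Corollary~\ref{left branch way below}. For the right branch on $(r_2,r_1]$ we repeat the argument of Proposition~\ref{right branch way below}, approximating by continuity points $r_n\searrow r_1$ via Theorem~\ref{continuous and except for countable point} and using the (S) condition with $p_1<u^r(r_1)$. The extra term is the requirement $m_i^r=1$ on $(-\infty,r_2]$. We handle it as in the crisp proof of Proposition~\ref{underlying order is continuous lattice}: $m_i^r(x)=1$ whenever $m_i^\ell(x)<1$, and the left-branch estimate forces $m_i^\ell(r_2)$ to be near $e^\ell(r_2)\le u^\ell(r_2)<p_2<1$ for large $i$. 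Since the sup over a directed $i$ of a meet of eventually increasing terms equals the meet of the sups, we obtain $P(u,e)\le\Phi(w)$.

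\textbf{Step 2 (finite joins).} Lemma~\ref{finite join of enriched way below} applies because $\mathbb{E}^1_{[-M,M]}$ is conically cocomplete. It gives $\mathfrak w(h,u)=1$ for all $h\in D_u$.

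\textbf{Step 3 (the equality).} We run the chain of equivalences of Theorem~\ref{G is a real enriched domain} verbatim, with $P(g,h)\to\phi(g)$ replaced by $P(v,w)\to\Phi(v)$. This yields $\Phi_u(v)\le\mathfrak{w}(v,u)$. The reverse inequality follows by taking $\Phi=\Phi_u$ in the definition of $\mathfrak w$, since $\operatorname{colim}\Phi_u=u$ gives $\mathfrak w(v,u)\le 1\to\Phi_u(v)$.

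The main obstacle is Step 1, specifically the coupling between the branches. The value $w^r=1$ on $(-\infty,r_2]$ must be controlled uniformly by a left-branch estimate that holds only in the enriched (graded) sense, not crisply. We would first try to show that $\sup_i\inf_{x\le r_2}m_i^r(x)=1$ by contradiction using $e^\ell(r_2)<p_2$. If this fails, we would try shrinking $r_2$ to a continuity point slightly to the left, where the strict inequality $u^\ell<p_2$ gives room.
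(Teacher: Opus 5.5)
Your overall structure is the paper's. It runs: the bottom case; then $\mathfrak w(b,u)=1$ for $b\in D_u^0$ via the splitting $\Phi(b)=A_\ell\wedge A_r\wedge A_0$, where $A_\ell=\sup_i\bigl(m_i^\ell(r_2^-)\to p_2\bigr)$, $A_r=\sup_i\bigl(p_1\to m_i^r(r_1)\bigr)$ and $A_0=\sup_i m_i^r(r_2)$; then Lemma~\ref{finite join of enriched way below}; then the weight inequality, with $\Phi=\Phi_u$ for the reverse bound. Your handling of $A_\ell$ and $A_r$ (Corollary~\ref{left branch way below}, and Proposition~\ref{right branch way below} on $[r_2,M]$) also matches the paper.

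\textbf{The gap is the coupling term $A_0$.} What must be shown is the graded inequality $P(u,e)\le A_0$. Your plan instead aims at the crisp statement $A_0=\sup_i\inf_{x\le r_2}m_i^r(x)=1$, derived from $e^\ell(r_2)\le u^\ell(r_2)<p_2<1$. But $e^\ell\le u^\ell$ is part of $u\sqsubseteq e$, so it is available only when $P(u,e)=1$. The crisp statement is in fact false in general. Take the constant net at $e=\chi_{-M}$ and any $b\in D_u^0$ with $r_2>-M$. Then $e^\ell(r_2)=1$ and $A_0=\chi_{-M}^r(r_2)=0$; the required inequality still holds only because $P(u,\chi_{-M})=0$.

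Neither fallback repairs this. The only left-branch information you have is the graded bound $L\le A_\ell$, which says nothing when $L$ is small. And $b$, hence $r_2$, is fixed, so shrinking $r_2$ is not an option.

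\textbf{The missing idea is to control $A_0$ with the right-branch term.} Use $R=\inf_x\bigl(u^r(x)\to e^r(x)\bigr)$ together with the fact that $u^r\equiv 1$ to the left of $u_1^-$. From $u^\ell(r_2)<p_2<1$ you get $r_2<u_1^-$, so you can pick a continuity point $t\in(r_2,u_1^-)$ of $e^r$. Put $B=\sup_i m_i^r$, which is non-increasing, and recall $e^r=\widetilde B$. Continuity of $e^r$ at $t$ gives $e^r(t)=B(t)$, hence
\[
P(u,e)\le R\le u^r(t)\to e^r(t)=e^r(t)=B(t)\le B(r_2)=A_0 .
\]
Taking $t>r_2$ rather than $t=r_2$ is essential. $\widetilde B(r_2)=B(r_2^-)$ can strictly exceed $B(r_2)$, for instance for an increasing net of crisp intervals $\chi_{[-M,\,r_2-1/i]}$. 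With this argument in place of your coupling step, the rest of your proof goes through as in the paper.
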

\begin{proof}
Let
\[
u_*=\operatorname{lim}\Bigl(\sup_{e\in\mathbb E^1_{[-M,M]}}P(e,-)\Bigr).
\]
By the definition of a limit, \(P(x,u_*)=\inf_{e}P(x,e)\) for every \(x\);
in particular \(P(u_*,e)=1\) for every \(e\), so \(u_*\) is the least
element of \((\mathbb E^1_{[-M,M]})_0\).

\smallskip
\noindent\textbf{Case 1.} \(u=u_*\). Then \(\Phi_u=P(-,u)\). For any
\(\Phi\in\mathcal I\mathbb E^1_{[-M,M]}\), we have
\[
\Phi(u)=1,\qquad
P(y,u)\le\Phi(u)\to\Phi(y)=\Phi(y).
\]
Since \(P(u,\operatorname{colim}\Phi)=1\), it follows that
\(P(y,u)\le P(u,\operatorname{colim}\Phi)\to\Phi(y)\). Taking the infimum
over \(\Phi\) yields \(P(y,u)\le\mathfrak w(y,u)\), and the reverse
inequality follows on taking \(\Phi=P(-,u)\). Hence
\(\mathfrak w(-,u)=\Phi_u\).

\smallskip
\noindent\textbf{Case 2.} \(u\ne u_*\). Fix \(y\in\mathbb E^1_{[-M,M]}\), and
let \(\Phi\in\mathcal I\mathbb E^1_{[-M,M]}\) be generated by a forward
Cauchy net \(\{v_i\}_{i\in I}\), so that
\[
\Phi(a)=\sup_{i\in I}\inf_{j\ge i}P(a,v_j)
\qquad\text{and}\qquad
v=\operatorname{colim}\Phi.
\]
Set
\[
L=\inf_{x\in[-M,M]}\bigl(v^\ell(x)\to u^\ell(x)\bigr),\qquad
R=\inf_{x\in[-M,M]}\bigl(u^r(x)\to v^r(x)\bigr),
\]
so that \(P(u,v)=L\wedge R\). We claim that
\begin{equation}\label{eq:PlePhi}
P(u,v)\le\Phi(b)\qquad\text{for every }b\in D_u^0.
\end{equation}

Suppose first that \(u_1^->-M\). Write \(b^\ell=\mathcal E^\ell_{p_2,r_2}\)
and
\[
b^r(x)=\begin{cases}
1,&x\le r_2,\\
p_1,&r_2<x\le r_1,\\
0,&x>r_1,
\end{cases}
\]
where \(u^\ell(r_2)<p_2<1\), \(0<p_1<u^r(r_1)\), and \(r_2,r_1\) are
continuity points of \(u^\ell,u^r\), respectively. A direct computation
gives \(\Phi(b)=A_\ell\wedge A_r\wedge A_0\), where
\[
A_\ell=\sup_{i\in I}\Bigl(\sup_{j\ge i}v_j^\ell(r_2^-)\to p_2\Bigr),\qquad
A_r=\sup_{i\in I}\Bigl(p_1\to\inf_{j\ge i}v_j^r(r_1)\Bigr),\qquad
A_0=\sup_{i\in I}\inf_{j\ge i}v_j^r(r_2).
\]
Since \(L=P(u^\ell,v^\ell)\) and \(R=P(u^r,v^r)\), Corollary
\ref{left branch way below} gives \(L\le A_\ell\), while applying Proposition \ref{right branch way below} to the restrictions
of \(u^r\) and \(v_j^r\) to \([r_2,M]\), we obtain \(R\le A_r\).

It remains to verify \(R\le A_0\). Since \(u^\ell(r_2)<p_2<1\), we have
\(r_2<u_1^-\). Let \(B(x)=\sup_{i\in I}\inf_{j\ge i}v_j^r(x)\), and choose a
common continuity point \(t\) of \(u^r\) and \(v^r\) with \(r_2<t<u_1^-\).
By the right-branch formula for the colimit, \(v^r(t)=B(t)\); moreover
\(u^r(t)=1\), and \(B\) is non-increasing. Therefore
\[
R\le u^r(t)\to v^r(t)=v^r(t)=B(t)\le B(r_2)=A_0.
\]
Together with \(L\le A_\ell\) and \(R\le A_r\), this proves
\eqref{eq:PlePhi} in the case \(u_1^->-M\).

Suppose now that \(u_1^-=-M\). Then \(u^\ell=b^\ell=\mathcal E^\ell_{1,-M}\),
so the left-branch term \(L\) equals \(1\) and \(P(u,v)=R\). If \(r_1>-M\),
Proposition \ref{right branch way below} yields
\[
P(u,v)=R\le\sup_{i\in I}\Bigl(p_1\to\inf_{j\ge i}v_j^r(r_1)\Bigr)=\Phi(b).
\]
If \(r_1=-M\), then \(b=\chi_{-M}=u_*\), whence \(\Phi(b)=1\), and the
inequality is trivial. Thus \eqref{eq:PlePhi} holds in all cases.

\smallskip
Since \(b\in D_u^0\) and \(\Phi\in\mathcal I\mathbb E^1_{[-M,M]}\) are arbitrary, the preceding argument yields
\(P(u,\operatorname{colim}\Phi)\le\Phi(b)\) for every \(b\in D_u^0\) and every \(\Phi\in\mathcal I\mathbb E^1_{[-M,M]}\).
Hence, by the definition of the way-below distributor, \(\mathfrak w(b,u)=1\) for every \(b\in D_u^0\).

Now let \(h\in D_u=\operatorname{FinJoin}(D_u^0)\), so that \(h=\bigvee_{k=1}^n b_k\) for some \(b_1,\ldots,b_n\in D_u^0\).
Since \(\mathfrak w(b_k,u)=1\) for each \(k\), Lemma~\ref{finite join of enriched way below} gives \(\mathfrak w(h,u)=1\).
Consequently, for every \(\Phi\in\mathcal I\mathbb E^1_{[-M,M]}\),
\[
1=\mathfrak w(h,u)\le P(u,\operatorname{colim}\Phi)\to\Phi(h),
\]
so \(P(u,\operatorname{colim}\Phi)\le\Phi(h)\). As \(h\in D_u\) is arbitrary, we obtain
\[
P(u,\operatorname{colim}\Phi)\le\inf_{h\in D_u}\Phi(h).
\]

For every \(h\in D_u\), the weight condition gives
\(P(y,h)\mathbin{\&}\Phi(h)\le\Phi(y)\). Combining this with
\(P(u,\operatorname{colim}\Phi)\le\Phi(h)\) yields
\(P(y,h)\mathbin{\&}P(u,\operatorname{colim}\Phi)\le\Phi(y)\), i.e.
\(P(y,h)\le P(u,\operatorname{colim}\Phi)\to\Phi(y)\). Taking the supremum
over \(h\in D_u\) and then the infimum over \(\Phi\), we obtain
\[
\Phi_u(y)=\sup_{h\in D_u}P(y,h)\le\mathfrak w(y,u).
\]

Conversely, \(\Phi_u\) is a forward Cauchy ideal with
\(\operatorname{colim}\Phi_u=u\). Taking \(\Phi=\Phi_u\) in the definition
of \(\mathfrak w\) gives
\[
\mathfrak w(y,u)\le P(u,u)\to\Phi_u(y)=\Phi_u(y).
\]
Hence \(\mathfrak w(-,u)=\Phi_u\). Since \(\Phi_u\) is a forward Cauchy
ideal with colimit \(u\), Proposition \ref{way-below} shows that
\((\mathbb E^1_{[-M,M]},P)\) is continuous. Being Yoneda complete by the
preceding result, it is a \([0,1]\)-enriched domain.
\end{proof}

A natural question arises: Is  the (S) condition necessary  for \(\mathbb{E}^1_{[-M,M]}\) to be a \([0,1]\)-enriched domain? To answer this, we first introduce several propositions and theorems.

Let \(X\) be a \([0,1]\)-enriched category. For each ideal \(I\) of the ordered set \(X_0\), the conical weight
\[
\Lambda(I) := \sup_{x \in I} X(-, x)
\]
is clearly a forward Cauchy ideal of \(X\). Conversely, if \(X\) is  conically  cocomplete and  conically complete; one verifies that for each forward Cauchy ideal \(\phi\) of \(X\), the set
\[
\chi(\phi) := \{ x \in X \mid \phi(x) = 1 \}
\]
is an ideal of the ordered set \(X_0\). 
\begin{thm}[\cite{Zhang2024IntroductoryNO}, Theorem 4.5]\label{adjoint relation between fuzzy order and crisp order}
    For each pair of maps \(f: X \to Y\) and \(g: Y \to X\) between \([0,1]\)-enriched categories, the following are equivalent:
\begin{enumerate}[label={\rm(\arabic*)}]
    \item Both \(f\) and \(g\) are functors, and \(f\) is left adjoint to \(g\).
    \item Both \(f\) and \(g\) are functors, and \(f: X_0 \to Y_0\) is left adjoint to \(g: Y_0 \to X_0\).
    \item For all \(x \in X\) and \(y \in Y\), \(Y(f(x), y) = X(x, g(y))\).
\end{enumerate}
\end{thm}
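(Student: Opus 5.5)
The plan is to prove the cycle (1)$\Rightarrow$(2)$\Rightarrow$(3)$\Rightarrow$(1). Throughout I use the standard meaning of an enriched adjunction $f\dashv g$ between functors: the unit and counit inequalities $x\sqsubseteq g(f(x))$ in $X_0$ and $f(g(y))\sqsubseteq y$ in $Y_0$, that is,
\[
X(x,g f(x))=1,\qquad Y(fg(y),y)=1 .
\]
The only tools needed are reflexivity, the transitivity axiom $\alpha(y,z)\,\&\,\alpha(x,y)\le\alpha(x,z)$, and the fact that $1$ is the unit of $\&$.

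\emph{(1)$\Rightarrow$(2).} Every functor is monotone on underlying orders: if $X(x,x')=1$, then $Y(f(x),f(x'))\ge 1$. So $f\colon X_0\to Y_0$ and $g\colon Y_0\to X_0$ are monotone maps. The unit and counit inequalities above are then exactly the unit and counit conditions of a Galois adjunction of monotone maps between the preorders $X_0$ and $Y_0$. Hence $f\dashv g$ as maps $X_0\to Y_0$ and $Y_0\to X_0$. This step is essentially definitional.

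\emph{(2)$\Rightarrow$(3).} Because $f\dashv g$ on underlying orders, we have $X(x,gf(x))=1$ and $Y(fg(y),y)=1$. Since $g$ is a functor, transitivity gives
\[
Y(f(x),y)\le X(gf(x),g(y))=X(gf(x),g(y))\,\&\,X(x,gf(x))\le X(x,g(y)).
\]
Symmetrically, since $f$ is a functor,
\[
X(x,g(y))\le Y(f(x),fg(y))=Y(fg(y),y)\,\&\,Y(f(x),fg(y))\le Y(f(x),y).
\]
Together these give equality.

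\emph{(3)$\Rightarrow$(1).} First obtain the unit and counit by specializing the identity $Y(f(x),y)=X(x,g(y))$:
\begin{itemize}
\item taking $y=f(x)$ gives $X(x,gf(x))=Y(f(x),f(x))=1$;
\item taking $x=g(y)$ gives $Y(fg(y),y)=X(g(y),g(y))=1$.
\end{itemize}
It remains to check functoriality, which I expect to be the only point needing a moment's care. For $f$, the identity followed by transitivity yields
\[
Y(f(x),f(x'))=X(x,gf(x'))\ge X(x',gf(x'))\,\&\,X(x,x')=X(x,x').
\]
For $g$, dually,
\[
X(g(y),g(y'))=Y(fg(y),y')\ge Y(y,y')\,\&\,Y(fg(y),y)=Y(y,y').
\]
So both maps are functors and satisfy the unit and counit inequalities, i.e.\ $f\dashv g$ as an enriched adjunction. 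This closes the cycle.
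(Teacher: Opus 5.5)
Your proof is correct. With the standard unit/counit definition of an enriched adjunction, $X(x,gf(x))=1$ and $Y(fg(y),y)=1$, each step of your cycle (1)$\Rightarrow$(2)$\Rightarrow$(3)$\Rightarrow$(1) goes through, including the functoriality check in (3)$\Rightarrow$(1) via transitivity against the unit and counit. The paper gives no proof of its own here; it simply quotes the result from Zhang's notes, and your cycle is the standard verification one would expect there, so there is nothing further to compare.
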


\begin{lem}[\cite{Zhang2024IntroductoryNO}, Lemma 17.19]
    If \(X\) is a  conically  cocomplete and  conically complete \([0,1]\)-enriched category and  \(\mathrm{Idl} X_0\) is the set of ideals of the ordered set \(X_0\), then \(\Lambda: \mathrm{Idl} X_0 \to (IX)_0\) is left adjoint to \(\chi: (IX)_0 \to \mathrm{Idl} X_0\).
\end{lem}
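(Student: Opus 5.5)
By Theorem~\ref{adjoint relation between fuzzy order and crisp order}, it suffices to prove three things. First, \(\Lambda\) and \(\chi\) are monotone maps between the ordered sets \(\mathrm{Idl}X_0\) (ordered by inclusion) and \((\mathcal IX)_0\). Here a weight is below another in \((\mathcal IX)_0\) exactly when \(\mathcal PX(\phi_1,\phi_2)=1\), i.e.\ \(\phi_1\le\phi_2\) pointwise. Second, \(I\subseteq\chi(\Lambda(I))\) for every ideal \(I\). Third, \(\Lambda(\chi(\phi))\le\phi\) for every forward Cauchy ideal \(\phi\). The unit and counit inequalities together give the order-theoretic adjunction \(\Lambda\dashv\chi\).

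I will first confirm that \(\Lambda\) and \(\chi\) are well defined.
\begin{itemize}
\item For an ideal \(I\), the conical weight \(\Lambda(I)=\sup_{x\in I}X(-,x)\) is generated by the net \(I\), indexed by itself via \(\sqsubseteq\). This net is increasing, so \(X(x_j,x_k)=1\) for \(j\le k\), and it is forward Cauchy. Its associated ideal is \(\sup_{i}\inf_{j\ge i}X(-,x_j)=\sup_{x\in I}X(-,x)\), using \(X(-,x_i)\le X(-,x_j)\) for \(i\le j\).
\item That \(\chi(\phi)\) is an ideal of \(X_0\) is asserted in the text preceding the lemma; I will re-check it. Lower closure follows from the weight inequality \(X(y,x)\&\phi(x)\le\phi(y)\). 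Nonemptiness follows because \(\phi(\operatorname{colim}\) of the empty conical weight\()\) equals \(1\); conical cocompleteness supplies a least element \(\bot\) with \(X(\bot,-)=1\), so \(\phi(\bot)\ge X(\bot,x)\&\phi(x)\) for all \(x\), and \(\sup_x\phi(x)=1\) since \(\phi\) is forward Cauchy.
\item For directedness, if \(\phi(x_1)=\phi(x_2)=1\), take the conical join \(x_1\vee x_2\). As in the proof of Lemma~\ref{finite join of enriched way below}, forward Cauchy ideals satisfy \(\phi(x_1\vee x_2)=\phi(x_1)\wedge\phi(x_2)=1\). This step uses directedness of the index set to interchange the finite meet with \(\sup_i\inf_{j\ge i}\).
\end{itemize}

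Monotonicity is immediate. If \(I\subseteq J\), then \(\Lambda(I)\le\Lambda(J)\) pointwise. If \(\phi\le\psi\) pointwise, then \(\chi(\phi)\subseteq\chi(\psi)\).

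For the unit, take \(x\in I\). Then \(\Lambda(I)(x)\ge X(x,x)=1\), so \(x\in\chi(\Lambda(I))\).

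For the counit, take \(y\in X\) and \(x\in\chi(\phi)\). The weight inequality gives \(X(y,x)=X(y,x)\&\phi(x)\le\phi(y)\). Taking the supremum over \(x\in\chi(\phi)\) yields \(\Lambda(\chi(\phi))(y)\le\phi(y)\).

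The main obstacle is the ideal property of \(\chi(\phi)\), specifically the identity \(\phi(x_1\vee x_2)=\phi(x_1)\wedge\phi(x_2)\) for forward Cauchy ideals. To prove it, I will fix \(\varepsilon\) and choose \(i_1,i_2\) with \(\inf_{j\ge i_k}X(x_k,z_j)>1-\varepsilon\), then pass to a common upper bound \(i\). This gives \(\inf_{j\ge i}X(x_1\vee x_2,z_j)=\inf_{j\ge i}\bigl(X(x_1,z_j)\wedge X(x_2,z_j)\bigr)>1-\varepsilon\), and letting \(\varepsilon\to0\) gives \(\phi(x_1\vee x_2)=1\). The rest is formal bookkeeping.
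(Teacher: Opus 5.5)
Your proposal is correct. The paper gives no proof of this lemma; it only cites Zhang's notes. Even the fact that $\chi(\phi)$ is an ideal of $X_0$ is merely asserted (``one verifies'') in the text preceding the statement. Your argument supplies the standard proof:
\begin{itemize}
\item monotonicity of $\Lambda$ and $\chi$;
\item the unit $I\subseteq\chi(\Lambda(I))$, which follows from $X(x,x)=1$;
\item the counit $\Lambda(\chi(\phi))\le\phi$, which follows from the weight inequality $X(y,x)\mathbin{\&}\phi(x)\le\phi(y)$;
\item a correct check that $\Lambda(I)\in\mathcal IX$ and that $\chi(\phi)$ is an ideal. Nonemptiness comes via the bottom element $\bot$ and $\sup_x\phi(x)=1$; directedness comes via $\phi(x_1\vee x_2)=1$, using directedness of the index set.
\end{itemize}

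Two minor remarks. First, the appeal to Theorem~\ref{adjoint relation between fuzzy order and crisp order} is unnecessary. The statement concerns an adjunction between the ordered sets $\mathrm{Idl}X_0$ and $(\mathcal IX)_0$, and there monotonicity plus unit and counit is directly the definition of a Galois connection. Second, your verification that $\chi(\phi)$ is an ideal uses only the empty and binary conical joins, so the hypothesis of conical completeness is never used. Alternatively, nonemptiness also follows from conical completeness: the meets $m_i=\inf_{j\ge i}z_j$ satisfy $\phi(m_i)\ge\inf_{j\ge i}X(m_i,z_j)=1$.
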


\begin{prop}\label{fuzzy ideal in crisp order}
   Let \(X\) be a separated, conically  cocomplete and  conically complete \([0,1]\)-enriched category. Then for each weight \(\phi\) of \(X\), the following are equivalent:
\begin{enumerate} [label={\rm(\arabic*)}]
    \item \(\phi\) is a forward Cauchy ideal of \(X\).
    \item \(\phi = \Lambda(I)\) for some ideal \(I\) of the complete lattice \(X_0\).
\end{enumerate}
In this case,   \(\operatorname{colim} \phi = \sup (\chi(\phi))\) in \(X_0\).
\end{prop}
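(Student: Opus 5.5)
The direction (2)$\Rightarrow$(1) is the easy one. If $I$ is an ideal of $X_0$, I will regard $I$ as a net indexed by itself with the inherited order; this is possible because $I$ is directed. The net is increasing, so $X(x_j,x_k)=1$ for $j\le k$, and therefore it is forward Cauchy. Its associated weight is
\[
\sup_{i\in I}\inf_{j\ge i}X(-,j)=\sup_{i\in I}X(-,i).
\]
Since $X(-,\cdot)$ is monotone in the second variable and $j\ge i$, the infimum is attained at $j=i$. This weight is exactly $\Lambda(I)$, so $\Lambda(I)\in\mathcal{I}X$.

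For (1)$\Rightarrow$(2), let $\phi\in\mathcal{I}X$. I will show $\phi=\Lambda(\chi(\phi))$, where $\chi(\phi)$ is an ideal as noted before the statement. I plan to use the adjunction $\Lambda\dashv\chi$ of Lemma 17.19.

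The inequality $\Lambda(\chi(\phi))\le\phi$ is the counit of the adjunction. It can also be checked directly: for $x\in\chi(\phi)$, the weight property gives $X(-,x)=X(-,x)\&\phi(x)\le\phi$.

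The reverse inequality $\phi\le\Lambda(\chi(\phi))$ is the real content. I will write $\phi=\sup_i\inf_{j\ge i}X(-,x_j)$ for a forward Cauchy net $\{x_i\}$. Conical completeness lets me form $m_i=\lim\sup_{j\ge i}X(x_j,-)$, the conical meet of the tail, which satisfies $X(-,m_i)=\inf_{j\ge i}X(-,x_j)$. This is exactly the manoeuvre of Theorem \ref{Yoneda limit of P}. It gives $\phi=\sup_iX(-,m_i)$. The $m_i$ are increasing in $X_0$, hence directed, and $\phi(m_i)\ge X(m_i,m_i)=1$, so each $m_i\in\chi(\phi)$. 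Therefore
\[
\phi=\sup_iX(-,m_i)\le\sup_{x\in\chi(\phi)}X(-,x)=\Lambda(\chi(\phi)).
\]
This step is the main obstacle. It depends on the tail meets $m_i$ existing and on the identity $X(-,m_i)=\inf_{j\ge i}X(-,x_j)$, which is precisely what the limit of the conical coweight $\sup_{j\ge i}X(x_j,-)$ provides by definition. Note that forward Cauchyness is not even needed at this point; it is needed only in (2)$\Rightarrow$(1).

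For the colimit formula, $\phi=\Lambda(I)$ with $I=\chi(\phi)$ is a conical weight. By conical cocompleteness its colimit $s$ satisfies $X(s,y)=\inf_{x\in I}X(x,y)$. Taking the value $1$ shows that $s$ is the least upper bound of $I$ in $X_0$. Separatedness makes $X_0$ a poset, so $s=\sup\chi(\phi)$ uniquely. The lattice $X_0$ is complete because conical (co)completeness gives all joins, as in Corollary \ref{complete lattice}.
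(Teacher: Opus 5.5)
Your proposal is correct and follows essentially the same route as the paper. Both directions match, and so does the counit inequality $\Lambda(\chi(\phi))\le\phi$. Your tail meets $m_i$ are exactly the elements whose down-closure the paper uses as its auxiliary ideal $I$; you compare $\phi$ directly with $\chi(\phi)$ instead, which makes your derivation of $\operatorname{colim}\phi=\sup\chi(\phi)$ slightly more direct than the paper's two-inequality argument, and you spell out the identity $\inf_{j\ge i}X(-,x_j)=X(-,m_i)$ from conical completeness, which the paper uses without comment.
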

\begin{proof}
    $(1) \implies (2)$ We show that the ideal $\chi(\phi)$ satisfies the requirement; that is, $\Lambda \circ \chi(\phi) = \phi$. Since $\Lambda$ is left adjoint to $\chi$, we have $\Lambda \circ \chi(\phi) \leq \phi$. For the converse inequality, pick a forward Cauchy net $\{x_i\}_{i \in D}$ of $X$ such that
\[
\phi(x) = \sup_{i \in D} \inf_{j \geq i} X(x, x_j).
\]
Let
\[
I = \left\{ x \in X \mid x \sqsubseteq \inf_{j \geq i} x_j \text{ for some } i \in D \right\},
\]
where $\inf_{j \geq i} x_j$ denotes the meet in $X_0$. Then, $I$ is an ideal of the ordered set $X_0$.
\[
\phi = \sup_{i \in D} \inf_{j \geq i} X(-, x_j)= \sup_{x \in I} X(-, x).
\]
so $I \subseteq \chi(\phi)$ and consequently, $\phi \leq \Lambda \circ \chi(\phi)$.

$(2) \implies (1)$ Trivial, since every ideal of the ordered set $X_0$ may be viewed as a forward Cauchy net of $X$.

Finally, we check that the colimit of \(\phi\) is the join  of \(\chi(\phi)\) in \(X_0\). Let \(I\) be the ideal of \(X_0\) given in $(1)\implies (2)$. 
\[
\operatorname{colim }\phi = \operatorname{colim }\sup_{x \in I} X(-, x)=  \sup I \le   \sup \chi(\phi).
\]
Conversely, \(\Lambda \circ \chi(\phi) \leq \phi\), so \(\operatorname{colim }\Lambda\circ \chi(\phi) =\sup \chi(\phi)\le \operatorname{colim }\phi \).
\end{proof}
\begin{thm}\label{denote way below in continuous domain}
    For each separated,  conically  cocomplete and  conically complete  \([0,1]\)-enriched category $X$, and \(X_0\) is a continuous lattice. The following are equivalent:
\begin{enumerate} [label={\rm(\arabic*)}]
    \item $X$ is a \([0,1]\)-enriched domain.
    \item For each $x \in X$ and each forward Cauchy ideal $\phi$ of $X$,
    \[
    X(x, \mathrm{colim}\,\phi) = \inf_{y \ll x} \phi(y),
    \]
    where $\ll$ denotes the way below relation in $X_0$.
    \item The map
    \[
    \mathrm{d}: X \to \mathcal{I}X,\quad \mathrm{d}(x) = \sup_{y \ll x} X(-, y)
    \]
    is a functor, where $\ll$ denotes the way below relation in $X_0$.
\end{enumerate}
\end{thm}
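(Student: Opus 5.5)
I will prove the equivalences in the cycle (1)$\Rightarrow$(2)$\Rightarrow$(3)$\Rightarrow$(1). The main tools are three earlier results. Lemma~\ref{way-below} characterizes continuity via the way-below distributor $\mathfrak w(-,x)$. Proposition~\ref{fuzzy ideal in crisp order} says every forward Cauchy ideal has the form $\Lambda(I)$ for an ideal $I$ of $X_0$, with $\operatorname{colim}\phi=\sup\chi(\phi)$. Theorem~\ref{adjoint relation between fuzzy order and crisp order} says that an enriched adjunction between functors can be detected in the underlying orders.

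First, the map $\mathrm d$ is well defined. Since $X_0$ is a continuous lattice, $\{y\mid y\ll x\}$ is a directed ideal of $X_0$ whose join is $x$. Hence $\mathrm d(x)=\Lambda(\{y\mid y\ll x\})\in\mathcal IX$, and by Proposition~\ref{fuzzy ideal in crisp order} we get $\operatorname{colim}\mathrm d(x)=x$.

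For (1)$\Rightarrow$(2), let $\twoheaddownarrow\dashv\operatorname{colim}$. Then $X(x,\operatorname{colim}\phi)=\mathcal PX(\twoheaddownarrow x,\phi)$. By Lemma~\ref{way-below}, $\twoheaddownarrow x=\mathfrak w(-,x)$, and this is a forward Cauchy ideal. By Proposition~\ref{fuzzy ideal in crisp order}, $\twoheaddownarrow x=\Lambda(J)$ with $J=\chi(\twoheaddownarrow x)=\{y\mid\mathfrak w(y,x)=1\}$. Since $\Lambda(J)=\sup_{y\in J}X(-,y)$ and $\phi$ is a weight, the Yoneda lemma gives $\mathcal PX(\Lambda(J),\phi)=\inf_{y\in J}\phi(y)$.

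The key step is to show $J=\{y\mid y\ll x\}$. If $\mathfrak w(y,x)=1$ and $D$ is directed with $x\le\sup D$, take $\phi=\Lambda(\downarrow D)$. Its colimit is $\sup D$, so $1=X(x,\sup D)\le\phi(y)=\sup_{d}X(y,d)$. This only yields degree $1$ as a supremum, not membership $y\le d$, so I expect this direction to be the main obstacle. I would bypass it instead. Since $\mathfrak w(-,x)$ is the least forward Cauchy ideal with colimit $\ge x$ in the appropriate enriched sense, and $\mathrm d(x)$ is a forward Cauchy ideal with colimit $x$, we get $\mathfrak w(-,x)\le\mathrm d(x)$, i.e. $\inf_{y\ll x}\phi(y)\ge X(x,\operatorname{colim}\phi)$.

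For the reverse inequality I show directly that $y\ll x$ implies $\mathfrak w(y,x)=1$. Write $\phi=\Lambda(I)$ and set $c=\operatorname{colim}\phi=\sup I$. I need $X(x,c)\le\phi(y)$. With $t=X(x,c)$, the aim is to use conical completeness to form the crisp meet of suitable elements $t$-close to $I$, combined with $y\ll x$. This is delicate, so I would prefer the route through (1): apply (1) to the ideal $\mathrm d(x)$, which gives $\mathfrak w(-,x)\ge\mathrm d(x)$ once $\mathfrak w(-,x)$ is known to be the left adjoint value. Indeed $\twoheaddownarrow x\le\phi$ whenever $x\sqsubseteq\operatorname{colim}\phi$, which holds for $\phi=\mathrm d(x)$. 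Also $\mathrm d(x)\le\twoheaddownarrow x$ holds because $y\ll x$ gives $\Lambda(\downarrow y)\le\phi$ for every ideal $\phi=\Lambda(I)$ with $x\le\sup I$, since $y\in I$. Combining the two inequalities gives $\twoheaddownarrow x=\mathrm d(x)$, and (2) follows from the Yoneda computation above.

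For (2)$\Rightarrow$(3), condition (2) reads $X(x,\operatorname{colim}\phi)=\mathcal IX(\mathrm d(x),\phi)$. Choosing $\phi=\mathrm d(x')$ gives $X(x,x')=\mathcal IX(\mathrm d(x),\mathrm d(x'))$, so $\mathrm d$ is a functor. For (3)$\Rightarrow$(1), take $\mathrm d$ as the candidate left adjoint and apply Theorem~\ref{adjoint relation between fuzzy order and crisp order}. It suffices that $\mathrm d\dashv\operatorname{colim}$ holds in the underlying orders, i.e. $\mathrm d(x)\le\phi\iff x\sqsubseteq\operatorname{colim}\phi$. The forward direction holds because $\operatorname{colim}$ is monotone and $\operatorname{colim}\mathrm d(x)=x$. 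For the backward direction, write $\phi=\Lambda(I)$ with $x\le\sup I$. Each $y\ll x$ then lies below some element of $I$, so $X(-,y)\le\phi$. Yoneda completeness holds by assumption, so $X$ is a $[0,1]$-enriched domain.
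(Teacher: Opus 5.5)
Your argument is correct and is essentially the paper's proof. For (1)$\Rightarrow$(2) you obtain $\twoheaddownarrow x\le\mathrm d(x)$ from the adjunction applied to $\phi=\mathrm d(x)$, using $\operatorname{colim}\mathrm d(x)=x$, and $\mathrm d(x)\le\twoheaddownarrow x$ because each $y\ll x$ lies in the ideal $\chi(\twoheaddownarrow x)$, whose join is $\operatorname{colim}\twoheaddownarrow x\sqsupseteq x$; for (3)$\Rightarrow$(1) you check $\mathrm d\dashv\operatorname{colim}$ on underlying orders and invoke Theorem~\ref{adjoint relation between fuzzy order and crisp order}, exactly as the paper does. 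Clean up the (1)$\Rightarrow$(2) paragraph, where two inequalities are written in the wrong direction: $\mathfrak w(-,x)\le\mathrm d(x)$ yields $\inf_{y\ll x}\phi(y)\le X(x,\operatorname{colim}\phi)$ (not $\ge$), and applying the adjunction to $\mathrm d(x)$ gives $\mathfrak w(-,x)\le\mathrm d(x)$ (not $\ge$), with the opposite inequality coming from the ideal argument.
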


\begin{proof}
$(1) \implies(2)$  To show that
\[
X(x, \operatorname{colim}\,\phi) = \inf_{y \ll x} \phi(y)
\]
for all $x \in X$ and all $\phi \in \mathcal{I}X$, it suffices to show that the left adjoint $\twoheaddownarrow: X \to \mathcal{I}X$ of $\mathrm{colim}: \mathcal{I}X \to X$ is given by $\twoheaddownarrow x = \sup_{y \ll x} \mathsf{y}(y)$, because
\[
\mathcal{I}X\left( \sup_{y \ll x} \mathsf{y}(y), \phi \right) = \mathrm{sub}_X\left( \sup_{y \ll x} \mathsf{y}(y), \phi \right) = \inf_{y \ll x} \phi(y).
\]

Since $x$ is a colimit of the forward Cauchy ideal $\sup_{y \ll x} \mathsf{y}(y)$, then $\twoheaddownarrow x \leq \sup_{y \ll x} \mathsf{y}(y)$. Since $\sup \chi(\twoheaddownarrow x) =  \mathrm{colim}\,\twoheaddownarrow x = x$ by Proposition \ref{fuzzy ideal in crisp order}, then every element way below $x$ in $X_0$ belongs to $\chi(\twoheaddownarrow x)$, so $\sup_{y \ll x} \mathsf{y}(y) \leq \twoheaddownarrow x$.

$(2)\implies(3)$ Obvious.

$(3)\implies(1)$ By virtue of Theorem \ref{adjoint relation between fuzzy order and crisp order} we only need to show that $\mathrm{d}: X_0 \to (\mathcal{I}X)_0$ is left adjoint to $\mathrm{colim}: (\mathcal{I}X)_0 \to X_0$. This follows directly from Proposition \ref{fuzzy ideal in crisp order}, which guarantees that for each $x \in X$, $\sup_{y \ll x} X(-, y)$ is the smallest ideal of $X$ (w.r.t. underlying order) that has $x$ as a colimit.
\end{proof}

\begin{thm}
  Let \( \& \) be a continuous t-norm. \((\mathbb{E}^1_{[-M,M]},P)\) is a \([0,1]\)-enriched domain if and  only if it satisfies  the (S) condition.  
\end{thm}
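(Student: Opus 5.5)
The sufficiency direction is Theorem \ref{Theorem (S) condition imply a real enriched domain}. The work is in necessity: if $(\mathbb{E}^1_{[-M,M]},P)$ is a $[0,1]$-enriched domain, then $\&$ satisfies (S). By Proposition \ref{conically cocomplete and conically complete in E}, Proposition \ref{underlying order is continuous lattice} and separatedness, $X=\mathbb{E}^1_{[-M,M]}$ satisfies the hypotheses of Theorem \ref{denote way below in continuous domain}. I will therefore use criterion (2): for every $u\in X$ and every forward Cauchy ideal $\phi$,
\[
P(u,\operatorname{colim}\phi)=\inf_{y\ll u}\phi(y).
\]
The plan is to embed a copy of the category $V=([0,1],\rightarrow)$ in $X$ and transport continuity to $V$. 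Theorem \ref{continuous= (S) condition} then gives (S).

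\textbf{Step 1 (embedding).} I fix two reals $-M<a<b<M$ and assign to each $t\in[0,1]$ the fuzzy number $e_t$ with $e_t^\ell=\chi_{[a,\infty)}$ and $e_t^r(x)=1$ for $x\le a$, $e_t^r(x)=t$ for $a<x\le b$, and $e_t^r(x)=0$ for $x>b$. The left branches coincide, so the left term of $P$ equals $1$, and a direct computation gives
\[
P(e_s,e_t)=s\rightarrow t .
\]
Hence $t\mapsto e_t$ is an isometric embedding of $V$ into $X$. Next I will define a retraction $\rho\colon X\to V$ by $\rho(w)=w^r(c)$ for some fixed point $c\in(a,b)$, or alternatively by an infimum-type expression. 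I need to check that $\rho$ is a functor, i.e.\ $P(w,w')\le w^r(c)\rightarrow w'^r(c)$. This holds because the right term of $P$ is an infimum over $x$ that includes $x=c$.

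\textbf{Step 2 (continuity of $V$).} Take $p\in(0,1]$, a point $q<p$, and a forward Cauchy net in $V$. Its image under $e_\bullet$ is forward Cauchy in $X$, and by Theorem \ref{Yoneda limit of P} the colimit is $e_{s}$, where $s$ is the Yoneda limit in $V$. This is consistent with the colimit formula in $V$, apart from the upper-semicontinuous modification, which does not bite here because $c$ is interior to a constant piece. Applying criterion (2) with $u=e_p$ and $\phi$ the forward Cauchy ideal of the net gives
\[
p\rightarrow s=\inf_{y\ll e_p}\phi(y).
\]
By the description of $\ll$ in the proof of Proposition \ref{underlying order is continuous lattice}, the elements way below $e_p$ are dominated by finite joins of step elements whose right branch stays below $p$ on $(a,b]$. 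I would evaluate $\phi$ on such $y$ through $\rho$ and show that this reduces to
\[
p\rightarrow \operatorname{colim}=\inf_{q<p}\sup_i\inf_{j\ge i}(q\rightarrow t_j).
\]
By Lemma \ref{way-below}, this is exactly the statement that $V$ is continuous with $\twoheaddownarrow p=\sup_{q<p}V(-,q)$. Theorem \ref{continuous= (S) condition} then yields (S).

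\textbf{Main obstacle.} The delicate step is the evaluation of $\inf_{y\ll e_p}\phi(y)$. A general $y\ll e_p$ also has nontrivial left branch and right-branch behaviour outside $(a,b]$, and one must confirm that these parts contribute $1$ to $\phi(y)$. I would handle this by first replacing each $y$ with the larger way-below element built from $D^0_{e_p}$, using that $\phi$ is antitone, so only finite joins of elements $b\in D^0_{e_p}$ matter. I would then compute $\phi(b)$ explicitly as $A_\ell\wedge A_r\wedge A_0$, as in the proof of Theorem \ref{Theorem (S) condition imply a real enriched domain}. For nets of the form $e_{t_j}$, the terms $A_\ell$ and $A_0$ equal $1$, and $A_r=\sup_i(p_1\rightarrow\inf_{j\ge i}t_j)$. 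Finite joins only take the maximum of $p_1$, which leaves the infimum over $p_1<p$ unchanged.
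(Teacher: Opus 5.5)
Your proof is correct, but it takes a different route from the paper. The paper argues by contradiction via criterion (3) of Theorem~\ref{denote way below in continuous domain}. If (S) fails, Proposition~\ref{implication operator is continuous at every point} yields a {\L}ukasiewicz block $[p,q]$ with idempotent $p>0$. Three explicit step fuzzy numbers $u,v,h$ (right branches constantly $x$, $p$, $\alpha$) then give $d(u)(h)=1$ and $d(v)(h)\le p$, whereas $P(u,v)=x\to p>p$, so $d$ is not a functor.

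You instead use criterion (2) and the isometric copy $t\mapsto e_t$ of $V$. From these you show that $V$ itself satisfies $V(p,\operatorname{colim}\phi)=\inf_{q<p}\phi(q)$, and then invoke the quoted result that $([0,1],\to)$ is continuous iff $\&$ satisfies (S). The required checks go through:
\begin{itemize}
\item $\operatorname{colim}\Phi=e_s$ with $s=\sup_i\inf_{j\ge i}t_j$, which is also the colimit in $V$.
\item Every $b\in D^0_{e_p}$ has $r_2<a$, so $A_\ell=A_0=1$.
\item Every $y\ll e_p$ lies below some member of the directed set $D_{e_p}$, since $\bigvee D_{e_p}=e_p$.
\end{itemize}

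One correction is needed. $D^0_{e_p}$ also contains elements with $r_2<r_1<a$ and arbitrary $p_1\in(0,1)$. For these, $\Phi(b)=1$, not $\sup_i(p_1\to\inf_{j\ge i}t_j)$. This matters: otherwise values $p_1\ge p$ would enter your infimum and give the wrong answer.

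In fact the exact evaluation you flag as the main obstacle is unnecessary. For $0<q<p$, a single witness $w\in D^0_{e_p}$ with $p_1=q$ and $r_1\in(a,b)$ gives $p\to s\le\Phi(w)=\phi(q)$. Some such witness is genuinely needed, since $e_q$ itself is not way below $e_p$. The reverse inequality, $\inf_{q<p}\phi(q)=\mathcal PV(\sup_{q<p}V(-,q),\phi)\le V(p,s)$ for $p>0$, holds in $V$ automatically because $\operatorname{colim}$ is a functor.

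As for what each route buys: the paper's argument is self-contained apart from the ordinal-sum description of (S), and it exhibits an explicit failure of functoriality of $d$. Yours is more structural. It shows that continuity of $(\mathbb E^1_{[-M,M]},P)$ descends to the embedded copy of $V$, which explains why precisely (S) is the right condition. The cost is that it delegates the t-norm analysis to the external theorem on continuity of $V$.
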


\begin{proof}[Proof of necessity]
By Proposition~\ref{conically cocomplete and conically complete in E}, 
$(\mathbb{E}^{1}_{[-M,M]},P)$ is separated, conically cocomplete and conically complete, 
and by Proposition~\ref{underlying order is continuous lattice} its underlying 
ordered set is a continuous lattice.  In view of Theorem~\ref{denote way below in continuous domain}, 
it suffices to prove that if
\[
\mathrm{d}: \mathbb{E}^{1}_{[-M,M]} \to \mathcal{I} \mathbb{E}^{1}_{[-M,M]},\quad
\mathrm{d}(u)=\sup_{w\ll u} P(-,w)
\]
is a functor, then $\&$ satisfies the (S) condition.

Suppose $\&$ fails the (S) condition. By Proposition~\ref{implication operator is continuous at every point}, 
there exist idempotents $p,q>0$ with $p<q$ such that $\&$ restricted to $[p,q]$ is isomorphic 
to the {\L}ukasiewicz t-norm.  Pick $x\in(p,q)$ and fix $M=1$. Define
\[
u(t)=\begin{cases}0,&t<-1,\\1,&t=-1,\\x,&-1<t\le1,\\0,&t>1,\end{cases}
\qquad
v(t)=\begin{cases}0,&t<-1,\\1,&t=-1,\\p,&-1<t\le1,\\0,&t>1.\end{cases}
\]
Then $u,v\in\mathbb E^1_{[-1,1]}$, and we have
\[
P(u,v)=\inf_t(u^r(t)\to v^r(t))=x\to p>p. \tag{$\ast$}
\]

\emph{Step 1.} Fix $t_0\in(-1,1)$ and $p<\alpha<x$. Let
\[
h(t)=\begin{cases}0,&t<-1\\1,&t=-1,\\\alpha,&-1<t\le t_0,\\0,&t>t_0.\end{cases}
\]  
By  Proposition
\ref{underlying order is continuous lattice},  \(h\in D_u^0\) implies \(h\ll_0u\).
Therefore
\[
d(u)(h)=\sup_{z\ll_0 u}P(h,z)\ge P(h,h)=1,
\]
that is $d(u)(h)=1$.

\emph{Step 2.} Take $p_n\nearrow p$ with $p_n<p$, and define
\[
v_n(t)=\begin{cases}0,&t<-1 \\1,&t=-1,\\p_n,&-1<t\le1,\\0,&t>1,\end{cases}
\]
Then $v_1\le_0 v_2\le_0\cdots$ and $\sup_0 v_n=v$. For any $w\ll_0 v$, the way-below
property yields $n$ with $w\le_0 v_n$, hence
\[
w^r(t_0)\le v^r_n(t_0)=p_n<p.
\]
Let $c=w^r(t_0)<p$. Since $h(t_0)=\alpha\in(p,q)$ while $c<p$, 
$\alpha\to c=c$. Thus
\[
P(h,w)\le h^r(t_0)\to w^r(t_0)=\alpha\to c=c<p,
\]
and taking the supremum over $w\ll_0 v$,
\[
d(v)(h)=\sup_{w\ll_0 v}P(h,w)\le p.
\]

\emph{Step 3.} Since \(d\) is assumed to be a functor, 
\[
P(u,v)\le d(u)(h)\to d(v)(h)=1\to d(v)(h)=d(v)(h)\le p,
\]
which contradicts $P(u,v)>p$. Hence $d$ is not a functor,
contradicting the domain assumption. Therefore $\&$ satisfies the (S) condition.
\end{proof}

\section{Conclusion}
This paper introduces a novel fuzzy order \(P\) on the space of fuzzy numbers \(\mathbb{E}^1\), grounded in the theory of \([0,1]\)-enriched categories. The construction is based on decomposing each fuzzy number into its left and right monotone upper-semicontinuous branches, and the order is defined by an inclusion-type formula involving the residual implication of a chosen left-continuous t-norm. We show that this order not only generalizes the natural orders on real numbers and interval numbers, but also captures the distributional and structural features of fuzzy numbers. A distinctive computational property is its independence from points of discontinuity (Theorem \ref{continuous and except for countable point}), which ensures robustness in practical evaluation.

Our completeness analysis reveals that the uniformly bounded subspace \(\mathbb{E}^1_{[-M,M]}\) possesses conical completeness, conical cocompleteness, and Yoneda completeness---structures that mirror the order-theoretic completeness of bounded real numbers. We further determine when this space is complete: \((\mathbb{E}^1_{[-M,M]},P)\) is complete---equivalently cocomplete, tensored, or cotensored---if and only if the t-norm is the minimum, \(\&=\wedge\) (Theorem~\ref{completeness iff Godel}). Thus, although conical and Yoneda completeness hold for every continuous t-norm, the stronger properties of completeness, cocompleteness, tensoredness and cotensoredness are exceptional and single out the Gödel t-norm. In contrast, the unrestricted space \(\mathbb{E}^1\) is neither finitely conically cocomplete nor finitely conically complete, and admits no nontrivial tensors or cotensors, highlighting the necessity of bounded-support constraints for a well-behaved order theory.

The main result of this paper is the characterization of continuity: under a continuous t-norm, \((\mathbb{E}^1_{[-M,M]}, P)\) constitutes a \([0,1]\)-enriched domain if and only if the t-norm satisfies the (S) condition. This finding establishes a precise boundary between t-norms that yield a continuous domain structure and those that do not, thereby providing a criterion for selecting appropriate t-norms in applications requiring approximation and fixed-point properties.

Several open problems remain for future investigation.

\begin{enumerate}[label=(\arabic*)]
    \item The present work focuses on bounded-support fuzzy numbers;
    extending the completeness and continuity results to the entire space
    $\mathbb{E}^1$---or to other important subclasses such as fuzzy numbers
    with unbounded support---would be a natural next step.

 \item From an applied perspective, the fuzzy order $P$ can serve as a
fuzzy relation for ranking in multi-criteria decision-making. By
thresholding $P(u,v) \geq \alpha$, one obtains a crisp partial order
sensitive to both the location and the shape of fuzzy numbers, with
$\alpha=1$ recovering the natural order on real and interval data. The
dependence of $P$ on the choice of a left-continuous t-norm further allows
the decision-maker to calibrate the strictness of the comparison---as
illustrated in Example~3.8. A systematic study of how the choice of
t-norm affects ranking outcomes in concrete decision-making problems,
together with a computational comparison between $P$ and established
ranking approaches, is left for future work.

 \item The relationship between the fuzzy order $P$ and the ranking methods surveyed in Section~1 merits further investigation. Unlike ranking index methods, $P$ is a fuzzy order that encodes a graded truth degree of inclusion, whereas ranking indices assign a crisp value to each fuzzy number and induce a total preorder. Furthermore, $P$ differs from the fuzzy relations used in the third class of approaches by Wang and Kerre~\cite{WANG2001387}: $\&$-transitivity involves numerical composition via a t-norm, whereas acyclicity concerns only the ordinal pattern $P(a,b) > P(b,a)$. For the minimum t-norm, it is known that $\&$-transitivity implies acyclicity~\cite{WANG2001387}; for other t-norms, however, this question remains open. A systematic computational comparison between $P$ and established ranking approaches is left for future work.

\end{enumerate}

In summary, this work provides a unified, category-theoretic foundation for ordering fuzzy numbers that is both mathematically rigorous and computationally tractable, opening new avenues for research in fuzzy optimization, fuzzy decision analysis, and the theory of \([0,1]\)-enriched domains.


\bibliographystyle{abbrv}

\end{document}